\documentclass[pdflatex,sn-mathphys-num]{sn-jnl}

\usepackage{amsmath,amssymb,amsfonts}
\usepackage{mathrsfs}
\usepackage{bm}
\usepackage{algorithm}
\usepackage{algpseudocode}
\usepackage{booktabs}
\usepackage{multirow}
\usepackage{rotating}
\usepackage{graphicx}
\usepackage{float}
\usepackage{subeqnarray}
\usepackage{hyperref}
\usepackage{caption}
\usepackage{siunitx}
\usepackage{xcolor}

\theoremstyle{thmstyleone}%
\newtheorem{theorem}{Theorem}[section]

\newtheorem{proposition}[theorem]{Proposition}
\newtheorem{corollary}[theorem]{Corollary}

\theoremstyle{thmstyletwo}%
\newtheorem{example}{Example}[section]

\theoremstyle{thmstylethree}%

\newtheorem{assumption}{Assumption}[section]

\renewcommand{\theequation}{\thesection.\arabic{equation}}

\def\diag{{\rm diag}\,}
\def\ba{\begin{array}}
\def\ea{\end{array}}
\def\beq{\begin{equation}}
\def\eeq{\end{equation}}
\def\bea{\begin{eqnarray}}
\def\eea{\end{eqnarray}}
\def\beann{\begin{eqnarray*}}
\def\eeann{\end{eqnarray*}}
\newcommand{\reff}[1]{\mbox{(\ref{#1})}}

\def\sgn#1{\text{sign}\left(#1\right)}

\makeatletter
\renewcommand{\section}{\@startsection {section}{1}{\z@}
  {-8pt plus -2pt minus -1pt}
  {4pt plus 2pt minus 1pt}
  {\normalfont\Large\bfseries}}

\renewcommand{\subsection}{\@startsection {subsection}{2}{\z@}
  {-6pt plus -2pt minus -1pt}
  {3pt plus 1pt minus 1pt}
  {\normalfont\large\bfseries}}

\renewcommand{\subsubsection}{\@startsection {subsubsection}{3}{\z@}
  {-4pt plus -1pt minus -1pt}
  {2pt plus 1pt minus 1pt}
  {\normalfont\normalsize\bfseries}}
\makeatother

\date{\today}

\begin{document}

\title[ BPMs for SLCC]{  A Relaxation and Rectification (ReCR) Framework for Systems with Linear and Complementary Constraints: Theoretical Foundation, Algorithms and
Numerical Experiments}

\author[1]{\fnm{Wissam} \sur{AlAli}}\email{walali@uh.edu}
\author[1] {\fnm{Xin} \sur{Jiang}}\email{xinjiang@uh.edu}
\author*[1]{\fnm{Jiming} \sur{Peng}}\email{jopeng@uh.edu}

\affil*[1]{\orgdiv{Department of Industrial and Systems Engineering},
  \orgname{University of Houston},
  \orgaddress{\street{4800 Calhoun Rd}, \city{Houston}, \postcode{77204}, \state{TX}, \country{USA}}}

\abstract{
Systems defined by linear and complementarity constraints (SLCCs) arise frequently in engineering, economics, and other related fields. They also appear in the optimality conditions of many challenging optimization models, such as bilinear optimization and linearly constrained quadratic optimization. It is known that finding a feasible solution to an SLCC is NP-hard in general. In this paper, we study the feasibility problem for a given SLCC: either find a feasible solution or determine that the system is infeasible. To this end, we introduce a universal relaxation theory (URT), which reformulates SLCC feasibility as an equivalent bilinear optimization problem with linear constraints in a lifted space. We then analyze the resulting bilinear model and derive necessary and sufficient optimality conditions for its global solutions. Based on these theoretical insights, we introduce a relaxation-rectification (ReCR) framework for finding a feasible solution to a given SLCC instance or certifying infeasibility. We develop several ReCR methods that differ in their working spaces and subproblem formulations and analyze their convergence properties. We also develop a numerical procedure for obtaining an infeasibility certificate when the ReCR methods do not find a feasible solution.

	We conduct numerical experiments to evaluate the reliability, robustness, and scalability of the proposed ReCR methods and compare them with existing SLCC solvers. On the tested small- and large-scale LCP instances from the literature, the proposed ReCR methods typically find feasible solutions in a few iterations. We also extend the benchmark with more challenging medium-scale SLCC instances, on which the proposed hybrid ReCR (H-ReCR) method exhibits promising performance.
}

\keywords{Systems with linear and complementary constraints (SLCC), bilinear optimization,  universal relaxation theory, rectified convex relaxation (ReCR).}

\maketitle

\section{Introduction}
   In this paper, we consider   the following   system   with linear and complementarity constraints (SLCC): \vspace{-0.05in}
 \begin{subequations} \label{mod:SLCC}
 \begin{align}    Mx+Qz-r \ge 0, &  \quad                Ax+Bz = b;   \label{LC-1} \\
           x^T (Mx+Qz-r)=0, &\quad  x,z\ge 0 \label{CC}
  \end{align}
\end{subequations}
  with variables $x\in \mathbb{R}^n$ and $z\in \mathbb{R}^m$. Here $M\in \mathbb{R}^{n\times n}$, $Q\in \mathbb{R}^{n\times m}$, $A\in \mathbb{R}^{p\times n}$, $B\in \mathbb{R}^{p\times m}$ are matrices, and $r\in \mathbb{R}^n$, $b\in\mathbb{R}^p$ are vectors.     SLCC~(\ref{mod:SLCC})
  arises in various  engineering applications \cite{Cottle1992LCP,Facchinei2002VICP,Judice2014LOCC}.
    It appears in the optimality conditions of several important  classes of challenging optimization problems such as bilinear optimization and bilevel optimization,   linearly constrained quadratic optimization (LCQO) and optimization problems with cardinality constraints \cite{Ferris1997MPEC,Judice2014LOCC,Luo1997MPEC,Mitchell2019LPCC}. As such, SLCCs can also serve as building blocks in the development of solution methods for these models. It has been shown that finding a feasible solution to an SLCC is NP-hard in general \cite{Judice2014LOCC, Pardalos1988}.

  An SLCC can be reformulated as  a specific LCQO, so methods such as
  alternative direction methods (ADM)  \cite{Boyd2011ADM,Konno1976MP,Gomez1982SLP}, interior-point methods \cite{Anjos2012ConicOpt,Ye1998MP} and the D.C.\ programming methods \cite{Horst1999DC,Thi2011DC-LCP} can be used to solve the reformulated LCQO.
  However, due to the nonconvexity of the objective function, these methods generally guarantee only stationarity for the reformulated LCQO; recovery of a solution to the original SLCC typically requires additional assumptions on the data matrices. For additional studies on stationary-point methods for optimization problems with linear and complementarity constraints, we refer to    \cite{Leyffer2004SIOPT,Luo1997MPEC,Pang1999COA, Scholtes2000MOR,JYe1999SIAMOpt,Ye1993MOR} and the references therein. 

  Another popular approach for SLCCs is to reformulate the complementarity conditions as a system of nonsmooth equations and then apply algorithms for solving such systems. These include Lemke's pivotal method \cite{Lemke1964LCP}, the iterative projection method \cite{He1996JCM-LCP}, and path-following methods \cite{Ferris1999Path, Ferris1999MP}. These methods can be effective for feasible SLCCs when the data matrices satisfy suitable assumptions.  Alternatively, an SLCC can be cast as a nonconvex optimization problem in which the objective function is defined by a merit function associated with the equation system \cite{Dirkse2005MPEC, Facchinei2002VICP, Ferris1999MP, Fukushima1996}. Most conventional optimization methods generally guarantee only convergence to a stationary point of the merit-function problem \cite{Boyd2011ADM, Judice2014LOCC},while convergence to a feasible solution of the original SLCC requires additional assumptions on the data matrices \cite{Dirkse2005MPEC, Facchinei2002VICP, Ferris1999MP, Fukushima1996}. However, it is unclear how to extend such results to generic SLCCs. In principle, the complementarity conditions  in  SLCCs can also be modeled using  binary variables \cite{Judice2014LOCC,Mitchell2019LPCC}, so global optimization methods for binary optimization, such as branch-and-bound methods, can be applied to solve SLCCs \cite{Wachter2006IPOPT,Sahinidis1996Baron}. As pointed out in \cite{Mitchell2019LPCC}, such global optimization methods are often not scalable for large-scale instances.

In the past two decades, the field of optimization has gone through exciting developments. Various approaches based on conic optimization relaxations have been proposed to find high-quality approximate solutions or tight bounds for challenging optimization problems \cite{Anjos2012ConicOpt,Luo2010QCQO}.
Several effective approaches for convex optimization, such as ADMs \cite{Boyd2011ADM} and first-order methods (FOMs) \cite{Beck2017FOM}, have been developed. Under appropriate assumptions, such methods have well-established convergence guarantees for convex optimization problems \cite{Beck2017FOM,Boyd2011ADM}. This motivates the question of whether convex relaxation models can be combined with efficient solution methods to find a feasible solution of an SLCC or certify its infeasibility.

The main difficulty in addressing an SLCC lies in handling its complementarity conditions. To reduce this difficulty, one may relax the complementarity conditions in (\ref{mod:SLCC}) to obtain a linear feasibility problem that can be solved efficiently. One benefit of this relaxation is the flexibility to choose an objective function, which allows us to extend the working space by introducing parameters in the objective function. This leads to the following question: does there exist a linear objective function such that the resulting linear optimization problem has an optimal solution from which one can recover a feasible solution of the original SLCC \reff{mod:SLCC}, or certify that no such solution exists?  An affirmative answer to this question would provide a theoretical foundation for a framework that seeks such a desirable objective function. The first objective in this paper is to develop a theoretical framework, called universal relaxation theory (URT), to establish the existence of such a desirable linear objective for SLCC \reff{mod:SLCC}.

Second, we consider how to identify such a desirable linear objective function, which leads to an equivalent bilinear optimization problem with linear constraints. Various approaches have been proposed for bilinear optimization \cite{Beck2017FOM,Gorski2007BICO, Konno1976MP,Horst1999DC,Ye1998MP}. These methods typically generate sequences converging to stationary points or partial optima of the underlying bilinear problem; global optimality generally requires additional assumptions on the data matrices. Recall that the necessary and sufficient optimality conditions for global solutions play an important role in the design and analysis of algorithms for global optimization.  The second objective of this paper is to explore the resulting bilinear optimization problem and derive necessary and sufficient conditions at its global optimal solution. Then, based on these theoretical insights, we introduce a relaxation-rectification framework for solving the associated bilinear reformulation. The framework, called relaxation-rectification (ReCR), works as follows. At each iteration, ReCR  solves a relaxed LO or a relaxed convex QO. Based on the optimal solution of the current relaxation model, we then rectify the objective function and solve a new relaxed optimization problem with the rectified objective. This process is repeated until a stopping criterion is met.

We introduce three ReCR variants. These variants differ in their working spaces, subproblem formulations, and objective-update schemes; we call them primal-dual ReCR (PD-ReCR), primal ReCR (P-ReCR), and rectified quadratic relaxation (ReQR). In particular, P-ReCR and ReQR work in the original variable space but solve different relaxation models (LO and QO). Since the feasibility/infeasibility detection of an SLCC requires a highly accurate solution to the underlying bilinear model and  P-ReCR is a first-order method that may have difficulty in obtaining such a highly accurate solution to the reformulated bilinear model, we also introduce a hybrid variant (H-ReCR) that combines P-ReCR with  ReQR to find a highly accurate solution to the bilinear model. We analyze the convergence properties of these ReCR approaches under stated assumptions.

We conduct numerical experiments on various SLCC instances and compare the proposed ReCR approaches with several existing solvers, including the PATH solver \cite{Ferris1999Path}, Lemke’s pivoting method \cite{Lemke1964LCP} and a path-following method based on the Fischer–Burmeister reformulation \cite{Fischer1995}. Specifically, we assess the reliability and robustness of the proposed ReCR approaches using a benchmark of ten small-scale LCP instances from the literature with different matrix structures and right-hand-side vectors. We also test scalability on several large-scale LCP instances with structured sparse matrices. The results show that the proposed ReCR approaches find feasible solutions for all tested LCP instances from the literature in few iterations. Motivated by these results, we combine randomization with deterministic construction rules to generate synthetic SLCC instances, including infeasible instances with feasible relaxations and instances with nonconvex solution sets. These synthetic instances extend existing SLCC benchmarks and support the further development of solution techniques. The results show that the proposed ReCR approaches find feasible solutions for the feasible SLCC instances in the extended library and provide infeasibility certificates for the infeasible instances.

The paper is organized as follows. In Section \ref{Sect:URT}, we introduce the universal relaxation theory for LCPs and use it to reformulate a given LCP as an equivalent bilinear optimization problem. We also analyze the resulting bilinear model and derive necessary and sufficient optimality conditions for its global solutions. In Section~\ref{Sec:ReCR-LCP}, we introduce three ReCR variants for generic LCPs, primal--dual ReCR, primal ReCR, and rectified quadratic relaxation (ReQR), and investigate their convergence properties. We also introduce a procedure for checking infeasibility when ReCR does not find a feasible LCP solution. In Section~\ref{sec:ReCR-SLCC}, we extend P-ReCR and ReQR to generic SLCCs. In Section~\ref{sec:ReCR-SLCC}, we evaluate the performance of the proposed ReCR approaches on SLCC instances from the literature and on synthetic instances generated in this work. Finally, Section \ref{sec:conclusion} concludes the paper and discusses future research directions.

\section{New Theoretical Results for Linear Complementary Problems}\label{Sect:URT}
In this section, we introduce some new theoretical results that form the theoretical foundation for the new ReCR framework for LCPs.
The section consists of two subsections. In the first subsection, we introduce the \textit{universal relaxation theory} (URT) for LCP, which shows that we can address the feasibility issue of an LCP by solving an equivalent bilinear optimization problem.  
 In the second subsection, we recast a given LCP as an equivalent bilinear optimization model and explore the constructed bilinear model to derive necessary and sufficient conditions for its global optimal solution.

\subsection{Universal Relaxation Theory for LCP}

For simplicity, we first consider the following LCP:
\begin{eqnarray} \label{mod:LCP}
 Mx\ge r, \qquad x\ge 0, \qquad x^T(Mx-r)=0.
\end{eqnarray}
Solving model~(\ref{mod:LCP}) is NP-hard \cite{Cottle1992LCP,Judice2014LOCC}.  We observe that the main difficulty in solving an LCP lies in the complementary constraints.  A common strategy in handling non-convex constraints is to relax these hard constraints as convex ones. On the other hand, we have the freedom to choose the objective function in the following LO:
\begin{subequations}\label{mod:PLO-LCP}
 \begin{eqnarray}      \min &&\quad c^Tx \\
         \text{s.t.} && \quad  Mx \ge r, \quad x\ge 0.
  \end{eqnarray}
\end{subequations}
Two natural questions arise:
  \begin{itemize}
  \item[]{\bf Q.1:} Does there exist a vector $c$ such that the optimal solution of model~(\ref{mod:PLO-LCP}) is a feasible solution to the original LCP?
  \item[]{\bf Q.2:} If the answer to Q.1 is ``yes'', how can we find such a desirable objective function?
    \end{itemize}
  In what follows, we try to address these two questions.
   First, we will develop a new theory, called universal relaxation theory, which shows that if the underlying LCP is feasible, then there does exist such  a desirable objective function.
     To establish the URT for model (\ref{mod:PLO-LCP}),  we consider its dual  as follows:
 \begin{subequations}\label{mod:DLO-LCP}
 \begin{eqnarray}      \max &&\quad r^Ty \\
         \text{s.t.} && \quad  M^Ty \le c, \quad y\ge 0.
  \end{eqnarray}
  \end{subequations}
Let $\mathcal{X}_\mathrm{LCP}$ denote the solution set of problem \reff{mod:LCP}. We assume that problem \ref{mod:PLO-LCP} is feasible; otherwise we can conclude that
$\mathcal{X}_\mathrm{LCP}$ is empty.  We have
\begin{theorem}\label{thm:URT} Suppose that problem \ref{mod:PLO-LCP} is feasible.
$\mathcal{X}_\mathrm{LCP}$ is nonempty if and only if there exists some vector $c$ under which there exists a primal--dual pair $(x^c,y^c)$ of the optimal solutions to  problems (\ref{mod:PLO-LCP}) and (\ref{mod:DLO-LCP}) respectively such that the following inequalities hold
  \begin{eqnarray} x^c_i\le  y^c_i, \quad \forall i\in \mathcal{I}=\{1,2,\cdots,n\}. \label{Opt-Cons}\end{eqnarray}
\end{theorem}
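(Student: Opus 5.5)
We prove the two directions separately. In both, the central tool is LP complementary slackness between \reff{mod:PLO-LCP} and \reff{mod:DLO-LCP}: a feasible pair $(x,y)$ is optimal if and only if $y^T(Mx-r)=0$ and $x^T(c-M^Ty)=0$.

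\emph{Sufficiency.} Suppose some $c$ admits an optimal primal--dual pair $(x^c,y^c)$ with $x^c\le y^c$ componentwise. Complementary slackness gives $(y^c)^T(Mx^c-r)=0$. Both $y^c$ and $Mx^c-r$ are nonnegative, so $y^c_i(Mx^c-r)_i=0$ for every $i\in\mathcal{I}$. Now use $0\le x^c_i\le y^c_i$. If $(Mx^c-r)_i>0$, then $y^c_i=0$, which forces $x^c_i=0$. Hence $x^c_i(Mx^c-r)_i=0$ for all $i$. Since $x^c$ is also feasible for \reff{mod:PLO-LCP}, it satisfies $Mx^c\ge r$ and $x^c\ge0$, so $x^c\in\mathcal{X}_\mathrm{LCP}$. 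This direction is routine.

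\emph{Necessity.} Take $\bar x\in\mathcal{X}_\mathrm{LCP}$. We must construct $c$ and a dual vector $\bar y$ such that $(\bar x,\bar y)$ is an optimal pair and $\bar x\le\bar y$. The natural first attempt is $\bar y=\bar x$ with $c:=M^T\bar x$. Then $\bar y\ge0$ and $M^T\bar y\le c$ hold trivially. Primal complementary slackness, $\bar y^T(M\bar x-r)=\bar x^T(M\bar x-r)=0$, is exactly the LCP complementarity condition. Dual slackness $\bar x^T(c-M^T\bar y)=0$ holds because $c-M^T\bar y=0$. Thus $(\bar x,\bar x)$ is a feasible pair satisfying complementary slackness, hence optimal for this $c$, and \reff{Opt-Cons} holds with equality. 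As an alternative check, duality gives $c^Tx\ge r^T\bar x$ for every feasible $x$, while $c^T\bar x=\bar x^TM\bar x=r^T\bar x$, so $\bar x$ attains the minimum.

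\emph{Main obstacle.} The step I would check most carefully is the optimality certificate in the necessity direction, especially the use of weak duality. Choosing the dual vector equal to the primal solution is what turns LCP complementarity into LP complementary slackness, and the argument above shows it suffices. The feasibility assumption on \reff{mod:PLO-LCP} plays no real role beyond guaranteeing that the primal problem is well-posed. I would also note that the condition only requires \emph{some} optimal pair, not every optimal pair, which is what makes the sufficiency argument valid.
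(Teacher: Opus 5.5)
Your proof is correct and follows the paper's argument. For necessity you make exactly the paper's choice $c=M^T\bar x$ with dual vector $\bar y=\bar x$ and certify optimality via $c^T\bar x=\bar x^TM\bar x=r^T\bar x$; for sufficiency, your complementary-slackness argument ($y^c_i(Mx^c-r)_i=0$ together with $0\le x^c_i\le y^c_i$) simply spells out what the paper summarizes as following directly from LO duality.
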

\begin{proof}  The sufficiency of the theorem follows directly from the duality theory for LO. It suffices to prove the necessity part of the theorem. Since $\mathcal{X}_\mathrm{LCP}$ is nonempty,  we can choose an arbitrary  $x^*\in \mathcal{X}_\mathrm{LCP}$  and define
\begin{eqnarray}\label{C-Update1} c=M^T x^*. \end{eqnarray}
Let $y^*=x^*$ be the vector of the dual variables in (\ref{mod:DLO-LCP}).
               It is easy to see that $y^*$ is a feasible solution to \reff{mod:DLO-LCP} and $c^Tx^*=r^Ty^*$. Therefore,   $x^*$ and $y^*$ are optimal solutions to (\ref{mod:PLO-LCP}) and  (\ref{mod:DLO-LCP}) respectively.  This completes the proof of Theorem~\ref{thm:URT}. \end{proof}

                We remark that Theorem~\ref{thm:URT} not only establishes an intrinsic relationship   between  the set $\mathcal{X}_\mathrm{LCP}$ and the primal--dual pair of optimal solutions to problems (\ref{mod:PLO-LCP}) and (\ref{mod:DLO-LCP}),
               but also provides theoretical insight to help design effective algorithms to identify a desirable vector $c$ under which we can construct a feasible point in $\mathcal{X}_\mathrm{LCP}$ from  the optimal solution of problem~(\ref{mod:PLO-LCP}), or determine it is impossible to do so. We next present a technical result, which is a generalization of  Theorem~\ref{thm:URT}.
     \begin{corollary}\label{cor:URT}
$\mathcal{X}_\mathrm{LCP}$ is nonempty if and only if there exists some vector $c$ under which there exists a primal--dual pair $(x^c,y^c)$ of the optimal solutions to  problems (\ref{mod:PLO-LCP}) and (\ref{mod:DLO-LCP}) respectively such that the following inequalities hold
  \begin{eqnarray}\label{Opt-Cons2} x^c_i\le \tau y^c_i, \quad \forall i\in \mathcal{I}=\{1,2,\cdots,n\}\end{eqnarray}
  for some $\tau>0$.
\end{corollary}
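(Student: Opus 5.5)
Necessity is immediate from Theorem~\ref{thm:URT}: if $\mathcal{X}_\mathrm{LCP}$ is nonempty, that theorem supplies a vector $c$ and an optimal primal--dual pair $(x^c,y^c)$ of \reff{mod:PLO-LCP}--\reff{mod:DLO-LCP} satisfying \reff{Opt-Cons}, which is \reff{Opt-Cons2} with $\tau=1$. (A nonempty $\mathcal{X}_\mathrm{LCP}$ makes \reff{mod:PLO-LCP} feasible, so the feasibility hypothesis of Theorem~\ref{thm:URT} holds.)

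For sufficiency, the plan is to show that the given $x^c$ already lies in $\mathcal{X}_\mathrm{LCP}$, using strong duality and complementary slackness for the pair \reff{mod:PLO-LCP}--\reff{mod:DLO-LCP}.

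\emph{Step 1.} Since $(x^c,y^c)$ is an optimal pair, there is no duality gap:
\[
0 = c^Tx^c - r^Ty^c = (c-M^Ty^c)^Tx^c + (y^c)^T(Mx^c-r).
\]
Both terms are inner products of nonnegative vectors, by primal and dual feasibility. Hence each term vanishes, and in particular $(y^c)^T(Mx^c-r)=0$. Since $y^c\ge 0$ and $Mx^c-r\ge 0$, this holds componentwise:
\[
y^c_i(Mx^c-r)_i=0 \quad \text{for all } i\in\mathcal{I}.
\]

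\emph{Step 2.} Fix $i$. If $y^c_i=0$, then \reff{Opt-Cons2} together with $x^c\ge 0$ gives $0\le x^c_i\le \tau y^c_i=0$, so $x^c_i=0$. If $y^c_i>0$, Step 1 gives $(Mx^c-r)_i=0$. In either case $x^c_i(Mx^c-r)_i=0$, so $x^c$ is feasible for \reff{mod:PLO-LCP} and complementary, i.e.\ $x^c\in\mathcal{X}_\mathrm{LCP}$.

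The only point needing care is the reduction of complementarity to the pattern condition $\{i: y^c_i=0\}\subseteq\{i: x^c_i=0\}$. The scalar $\tau>0$ enters only through this implication, which is why any positive $\tau$ works. I expect no genuine obstacle beyond this step.
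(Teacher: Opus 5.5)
Your proof is correct, but you reach sufficiency by a different route than the paper. The paper does not argue complementarity directly for general $\tau$. Instead it observes that replacing $c$ by $\tau c$ leaves the primal optimal set of \reff{mod:PLO-LCP} unchanged and scales dual optimal solutions by $\tau$. Hence $(x^c,\tau y^c)$ is an optimal pair for the cost vector $\tau c$ and satisfies \reff{Opt-Cons}, so sufficiency follows from Theorem~\ref{thm:URT}. The sufficiency half of that theorem is itself only asserted to ``follow directly from the duality theory for LO.''

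You skip the rescaling and run the complementary-slackness argument directly. This fills in exactly the detail the paper leaves implicit in Theorem~\ref{thm:URT}. It also shows that $\tau$ plays no real role: what you use is the support inclusion $\{i: y^c_i=0\}\subseteq\{i: x^c_i=0\}$. For $x^c,y^c\ge 0$, the existence of some $\tau>0$ with $x^c\le \tau y^c$ is equivalent to that inclusion, so your argument exposes the actual content of the condition.

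What the paper's scaling route buys is the justification for fixing $\tau=1$ in the bilinear model \reff{mod:PDLO} without loss of generality, since any $\tau$ can be absorbed into $c$. That is the algorithmic point the authors want from the corollary.
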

  We remark that such a generalization can facilitate the design of the learning scheme. This is because the ideal case $\tau=1$ in Theorem \ref{thm:URT} holds only when $x^*$ is known a prior. Unfortunately,   $x^*$ is usually unknown  in advance.
  Instead, we know only some vector $c$ and the optimal solutions to model \reff{mod:PLO-LCP} and its dual denoted by $x^c$ and $y^c$ respectively.
    On the other hand, we observe that the optimal solution to model \reff{mod:PLO-LCP} is invariant under vector scaling, i.e.,  if we scale the vector $c$ to  $\tau c$ for arbitrary $\tau>0$, then the optimal solution to the scaled model \reff{mod:PLO-LCP} remains the same while the optimal solution  to the scaled dual problem will be changed to $\tau y^*$. This  shows that if the inequalities \reff{Opt-Cons2} hold, then we can scale the vector $c$ to $\tau c$ so that the inequalities \reff{Opt-Cons} hold at the optimal solutions to the scaled model and its scaled dual.

Alternatively, we can reformulate a given LCP as the following quadratic optimization problem:
\begin{subequations}\label{mod:QO-LCP}
 \begin{eqnarray}      \min &&\quad f_\mathrm{q}(x)=x^TMx-r^Tx \\
         \text{s.t.} && \quad  Mx \ge r, \quad x\ge 0.
  \end{eqnarray}
  \end{subequations}
The QO model \reff{mod:QO-LCP} can be viewed as a restricted reformulation obtained by choosing the objective vector as $c=M^T x$ in the bilinear optimization model \reff{mod:PDLO}. The great flexibility in the selection of $c$ in model \reff{mod:PLO-LCP} facilitates the design of new ReCR approaches for the original LCP.  Specifically,  the so-called  rectified quadratic relaxation (ReQR) approach amounts to the well-known D.C. program \cite{Horst1999DC} for \reff{mod:QO-LCP}. Another equivalent reformulation casts a given LCP as the following system of non-smooth equations:
  \begin{eqnarray}\label{mod:SEQ-LCP}
       \phi(x)=\min(x, Mx-r)=0.
  \end{eqnarray}

\subsection{ A bilinear optimization model for LCP and its properties}

Based on Theorem~\ref{thm:URT}, we propose to solve the following bilinear optimization problem
\begin{subequations} \label{mod:PDLO}
 \begin{eqnarray}  \min &&\quad  c^Tx - r^Ty \\
    \text{s.t.} &&\quad    Mx \ge r,  \quad  M^Ty \le c; \\
        &&\quad           x\le  y, \quad x,y\ge 0,
  \end{eqnarray}
\end{subequations}
where the decision variables are $c,x,y$. We refer to \reff{mod:PDLO} as the bilinear optimization (BLO) problem. For a fixed vector $c$, \reff{mod:PDLO} reduces to a linear optimization problem in variables $(x,y)$; we denote this fixed-$c$ linear optimization problem by $\mathrm{LO}(c)$. As shown in the following proposition, solving the BLO model \reff{mod:PDLO} can help us either find a feasible solution to the original LCP model \reff{mod:LCP} or determine it is infeasible.
\begin{proposition}\label{Prop:BLO} Suppose that model~\reff{mod:PDLO} is feasible. Let $(c^*, x^*, y^*)$  be the global optimal solution to problem \reff{mod:PDLO}.
If $(c^*)^T x^*-r^Ty^*=0$, then $x^*$ is a solution to model~\reff{mod:LCP}. If $(c^*)^T x^*-r^T y^*>0$, then model~\reff{mod:LCP} is infeasible.
\end{proposition}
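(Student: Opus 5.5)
The proof rests on three facts about \reff{mod:PDLO}: its objective is nonnegative on the feasible set, it vanishes exactly at LCP solutions, and the construction in the proof of Theorem~\ref{thm:URT} gives a feasible point with value zero whenever $\mathcal{X}_\mathrm{LCP}\neq\emptyset$.

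\textbf{Step 1: nonnegativity and its consequence.} Let $(c,x,y)$ be any feasible point of \reff{mod:PDLO}. From $x\ge 0$ and $M^Ty\le c$ we get $c^Tx\ge y^TMx$. From $y\ge 0$ and $Mx\ge r$ we get $y^TMx\ge r^Ty$. Hence
\[
c^Tx-r^Ty=x^T(c-M^Ty)+y^T(Mx-r)\ge 0 .
\]
If the value is zero, both nonnegative terms vanish, so $y^T(Mx-r)=0$. Because $0\le x\le y$ and $Mx-r\ge 0$, we obtain
\[
0\le x^T(Mx-r)\le y^T(Mx-r)=0 .
\]
So $x^T(Mx-r)=0$. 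Together with $Mx\ge r$ and $x\ge 0$, this shows $x$ solves \reff{mod:LCP}. Applying this to $(c^*,x^*,y^*)$ gives the first claim.

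\textbf{Step 2: infeasibility claim, by contrapositive.} Suppose $\mathcal{X}_\mathrm{LCP}$ contains some $\bar x$. Set $c=M^T\bar x$ and $x=y=\bar x$, as in the proof of Theorem~\ref{thm:URT}. All constraints of \reff{mod:PDLO} hold: $M^Ty=c$, $x\le y$, and $Mx\ge r$. The objective value is $\bar x^TM\bar x-r^T\bar x=\bar x^T(M\bar x-r)=0$. By Step 1, zero is a lower bound on the feasible set, so the global optimal value is $0$. Therefore, if the value at the global optimum $(c^*,x^*,y^*)$ is positive, \reff{mod:LCP} must be infeasible.

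\textbf{Main obstacle.} Everything above is elementary. The delicate point is the step $x^T(Mx-r)\le y^T(Mx-r)$ in Step 1, which relies on both $x\le y$ and $Mx-r\ge 0$. This is why the coupling constraint $x\le y$ in \reff{mod:PDLO} is essential. A second subtlety is that the proposition assumes a global optimal solution exists. Step 2 only needs the optimal value, and the problem's infimum is bounded below by $0$, so this assumption is harmless.
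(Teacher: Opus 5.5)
Your proof is correct and takes essentially the paper's route: the paper gives no separate proof of this proposition but derives it from Theorem~\ref{thm:URT}, whose necessity argument is exactly your Step~2 construction $c=M^T\bar x$, $x=y=\bar x$, and whose sufficiency part (``follows directly from the duality theory for LO'') is what your gap identity $c^Tx-r^Ty=x^T(c-M^Ty)+y^T(Mx-r)$, combined with $0\le x\le y$, makes explicit. Your Step~1 additionally shows that the objective is nonnegative on the whole feasible set, so the two cases in the proposition are exhaustive.
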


Note that due to the bilinearity of the objective function in \reff{mod:PDLO}, it is still challenging to locate its global optimal solution.
As  pointed out in the introduction, though numerous optimization methods such as the successive linear optimization approach  and the D.C. program can be applied to solve \reff{mod:PDLO}, these algorithms can only find a stationary point that may not be the global optimal solution to model \reff{mod:PDLO}.

Next, we explore properties of the linear optimization problem, denoted by $\mathrm{LO}(c)$, obtained from \reff{mod:PDLO} after fixing $c$. The dual of $\mathrm{LO}(c)$ is
\begin{subequations} \label{mod:DDLO}
 \begin{eqnarray}  \max &&\quad   r^Tv-c^T u \\
        \text{s.t.} &&\quad M u- s \ge r,  \quad  M^Tv- s \le c; \\
        &&\quad           u,v,s\ge 0.
  \end{eqnarray}
\end{subequations}
Recall that for a feasible and bounded linear program, there exists a primal--dual pair of optimal solutions that satisfy strict complementarity. For fixed~$c$, let $(x^c, y^c)$ and $(u^c,v^c,s^c)$ denote a strictly complementary primal--dual optimal solution of $\mathrm{LO}(c)$ and its dual (\ref{mod:DDLO}), respectively.  Then the following conditions hold for every $i\in \mathcal{I}$
\begin{subequations} \label{PD-conds}
 \begin{eqnarray}    (c-M^Tv^c+s^c)_ix^c_i=0, && x^c_i+c_i-(M^Tv^c)_i+s^c_i>0; \nonumber\\
                         (Mx^c-r)_i v^c_i=0, &&  v^c_i+(Mx^c)_i-r_i>0; \nonumber\\
          (Mu^c- s^c-r)_iy^c_i=0, &&  y^c_i+ (Mu^c- s^c-r)_i>0; \nonumber\\
          (c-M^Ty^c)_i u^c_i=0, &&   u^c_i+c_i-(M^Ty^c)_i>0; \nonumber\\
               ( y^c-x^c)_i s^c_i=0, &&   y^c_i-x^c_i+s^c_i>0. \nonumber
  \end{eqnarray}
\end{subequations}
From the above optimality conditions, if $x^c_i>0$, then $(M^Tv^c)_i-s^c_i=c_i$, and hence $s_i^c = (M^Tv^c-c)_i$. For indices with $x^c_i=0$, the value of $s_i^c$ may not be uniquely determined by these conditions. In what follows, we select a strictly complementary primal--dual optimal solution for which the slack vector $s^c$ is represented as
\begin{eqnarray} \label{def:sc}
  s^c=\max(M^Tv^c-c,0).
\end{eqnarray}
This selection is used in the subsequent analysis.
Similarly, we can also conclude that $(Mu^c-r)_i=s^c_i$ whenever $x^c_i>0$. Therefore, we derive the following equivalent form for $s^c$:
\begin{eqnarray}\label{def2:sc} s^c=\diag(\max(Mu^c-r,0)) \cdot \sgn{x^c},
           \end{eqnarray}
           where $\sgn{x^c}$ is the sign function defined by
           \[ \sgn{x^c}_i=\left\{\begin{array}{ll}
                                    1 &\quad \mbox{if $x^c_i>0$;}\\
                                    0 &\quad \mbox{otherwise.}
                                    \end{array}\right.\]

Now we state conditions, expressed in terms of the fixed-$c$ LO pair $\mathrm{LO}(c)$--\reff{mod:DDLO}, under which the corresponding solution $x^c$ solves the original LCP.
\begin{theorem}\label{thm:main1} For fixed $c$, suppose that $\mathrm{LO}(c)$ and its dual \reff{mod:DDLO} are feasible.  Let $(x^c, y^c)$ and $(u^c,v^c,s^c)$ be a strictly complementary primal--dual optimal solution of $\mathrm{LO}(c)$ and \reff{mod:DDLO}, respectively.  Then $s^c$ can be set as
\[
  s^c=\max(M^Tv^c-c,0).
\]
Moreover, $x^c\in \mathcal{X}_\mathrm{LCP}$ if and only if at least one of the following two conditions is satisfied:
\begin{itemize}\itemsep-2pt
\item[(i):]  $ c^Tx^c-r^Ty^c=c^Tu^c-r^Tv^c=0$; \quad (ii): $s^c=0$.
\end{itemize}
\end{theorem}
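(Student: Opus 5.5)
The plan has three parts. First, justify the normalization $s^c=\max(M^Tv^c-c,0)$. Second, prove sufficiency of (i) and (ii) through a single lower bound on the objective of $\mathrm{LO}(c)$, showing that (ii) forces (i). Third, attack necessity, which I expect to be the hard part.

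\emph{Normalization of $s^c$.} Dual feasibility in \reff{mod:DDLO} requires $s^c\ge M^Tv^c-c$ and $s^c\ge 0$, so $s^c\ge \hat s:=\max(M^Tv^c-c,0)$. Replacing $s^c$ by $\hat s$ gives the following.
\begin{itemize}
\item $\hat s$ keeps $Mu^c-\hat s\ge r$, and $s$ does not appear in the dual objective, so $(u^c,v^c,\hat s)$ is still dual optimal.
\item The complementarity pairs are preserved. If $x^c_i>0$, then $(c-M^Tv^c+s^c)_i=0$, so $s^c_i=(M^Tv^c-c)_i\ge 0$ and hence $\hat s_i=s^c_i$. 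If $y^c_i>0$ and $s^c_i>0$, then $x^c_i=y^c_i>0$, so again $\hat s_i=s^c_i$. If $s^c_i=0$, then $\hat s_i=0$. Hence $(Mu^c-\hat s-r)_iy^c_i=0$ and $\hat s_i(y^c-x^c)_i=0$.
\end{itemize}
This also yields \reff{def2:sc}.

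\emph{Sufficiency.} For any feasible $(x,y)$ of $\mathrm{LO}(c)$ I would use the chain
\[
c^Tx-r^Ty\;\ge\; y^TMx-r^Ty \;=\; y^T(Mx-r)\;\ge\; x^T(Mx-r)\;\ge 0 .
\]
Here the first step uses $x\ge 0$ and $M^Ty\le c$, the second uses $y\ge x$ and $Mx\ge r$, and the last uses $x\ge 0$ and $Mx\ge r$. By strong duality the two expressions in (i) are equal. So (i) forces $x^{c\,T}(Mx^c-r)=0$, i.e.\ $x^c\in\mathcal{X}_\mathrm{LCP}$.

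For (ii), assume $s^c=0$. Strict complementarity in the last pair then gives $y^c_i>x^c_i\ge 0$ for all $i$. The third pair then gives $Mu^c=r$. Consequently
\[
r^Tv^c=(u^c)^TM^Tv^c\le (u^c)^Tc,
\]
using $u^c\ge0$ and $M^Tv^c\le c$. Hence the dual value $r^Tv^c-c^Tu^c\le 0$. Combined with the nonnegativity from the chain above, this shows the value is zero, so (ii) implies (i) and therefore $x^c\in\mathcal{X}_\mathrm{LCP}$.

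\emph{Necessity (main obstacle).} Suppose $x^c\in\mathcal{X}_\mathrm{LCP}$ and $s^c\ne0$; I want to show the optimal value $V$ is $0$. Using the complementarity conditions I would first derive the identities
\[
c^Tx^c=r^Tv^c-(s^c)^Tx^c,\qquad c^Tu^c=r^Ty^c+(s^c)^Ty^c ,
\]
so that $V=r^T(v^c-y^c)-(s^c)^Tx^c$. The aim is to show that the chain of inequalities above is tight at $(x^c,y^c)$ whenever $(x^c)^T(Mx^c-r)=0$. Concretely, I would try to exploit two facts:
\begin{itemize}
\item $s^c_i>0$ forces $x^c_i=y^c_i$, together with $(Mu^c-r)_i=s^c_i$ from \reff{def2:sc};
\item on the support of $x^c$, the LCP condition gives $(Mx^c-r)_i=0$, and $(c-M^Tv^c)_i=-s^c_i$.
\end{itemize}
From these I would try to build a feasible $y'$ with $y'\ge x^c$ and $M^Ty'\le c$ attaining $r^Ty'=c^Tx^c$, by combining $y^c$ with $v^c$ on the index set where $s^c>0$. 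If that direct construction fails, my fallback is to perturb $c$ along $M^Tx^c$, as in the proof of Theorem~\ref{thm:URT}, and compare optimal values. I am least certain this direction closes without further hypotheses; it is the step I would scrutinize first.
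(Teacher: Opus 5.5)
Your normalization of $s^c$ and your sufficiency argument are correct. The sufficiency part is essentially the whole of the paper's proof: the paper only observes that the dual value $r^Tv^c-c^Tu^c$ is nonnegative and vanishes when $s^c=0$. It leaves the implication (i)$\Rightarrow x^c\in\mathcal{X}_{\mathrm{LCP}}$ implicit, and your chain $c^Tx-r^Ty\ge y^T(Mx-r)\ge x^T(Mx-r)\ge 0$ makes that step explicit.

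One small repair: in (ii)$\Rightarrow$(i), do not appeal to strict complementarity. After you replace $s^c$ by $\max(M^Tv^c-c,0)$, strictness of the pair $(y-x,\,s)$ can be lost at indices with $x^c_i=y^c_i=0$. Instead use directly that any dual-feasible point with $s=0$ has $Mu\ge r$ and $M^Tv\le c$, hence $c^Tu\ge v^TMu\ge r^Tv$.

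The real gap is necessity, and it cannot be closed: for a fixed $c$ that direction is false. Take
\[
M=\begin{pmatrix}1&1\\-1&-1\end{pmatrix},\qquad r=\begin{pmatrix}1\\-2\end{pmatrix},\qquad c=\begin{pmatrix}0\\2\end{pmatrix}.
\]
Here $M^Ty\le c$ reduces to $y_1\le y_2$. Using $y_2\ge y_1\ge x_1$, every feasible point of $\mathrm{LO}(c)$ satisfies $2x_2-y_1+2y_2=2x_2+(y_2-y_1)+y_2\ge 2x_2+x_1\ge x_1+x_2\ge 1$. Equality holds only at $x=(1,0)^T$, $y=(1,1)^T$, so the primal optimum is unique with value $1$.

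The points $x^c=(1,0)^T$, $y^c=(1,1)^T$, $u^c=(2,0)^T$, $v^c=(1,0)^T$, $s^c=(1,0)^T$ form a strictly complementary optimal pair, and they already satisfy $s^c=\max(M^Tv^c-c,0)$. Since $Mx^c-r=(0,1)^T$, we have $x^c\in\mathcal{X}_{\mathrm{LCP}}$. Yet $c^Tx^c-r^Ty^c=1\neq 0$ and $s^c\neq 0$. In fact no dual optimal point can have $s=0$, because the bound above would force the dual value to be $\le 0$.

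So your attempt to build $y'$ must fail, and perturbing $c$ toward $M^Tx^c$ changes the problem rather than certifying anything about the given $c$. The paper's proof does not treat this direction either. What is actually true is the one-way chain (ii)$\Rightarrow$(i)$\Rightarrow x^c\in\mathcal{X}_{\mathrm{LCP}}$. An equivalence with LCP feasibility holds only after also minimizing over $c$, as in Theorem~\ref{thm:URT} and Proposition~\ref{Prop:BLO}.
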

\begin{proof} We need only to prove the conclusion (ii). For this, we first observe that from the optimality of $(u^c,v^c,s^c)$ we obtain
\[ r^Tv^c-c^Tu^c\ge 0, \]
and the equality holds whenever $s^c=0$. This completes the proof of the theorem. \end{proof}

 We remark that to the best our knowledge, this is the very first time that different sufficient and necessary conditions for the global optimal solution to some non-convex bilinear optimization problem are reported in the literature. We also observe that the second condition is derived from the optimal solution of the dual problem \reff{mod:DDLO}.
It is worth mentioning that $s^c$ is similar to the loss function in supervised learning.  However, unlike in supervised learning where the loss function can be computed easily, we need to solve some LO first to compute the loss vector $s^c$. Theorem~\ref{thm:main1} suggests two possible strategies for driving the bilinear model \reff{mod:PDLO} toward an LCP solution: one may directly reduce the bilinear objective $c^Tx-r^Ty$, or for fixed $c$, reduce the loss $e^Ts$ obtained from the dual solution of $\mathrm{LO}(c)$.
A key issue is how to update the vector $c$ using the obtained partial optimal solution to ensure the convergence to the global optimal solution of the original bilinear optimization model.

\section{  Rectified Convex Relaxations for LCP}\label{Sec:ReCR-LCP}

 In this section, we  introduce  three different methods for LCPs and study their convergence properties.

 \subsection{The primal--dual rectified convex relaxation}
 In this subsection, we present a specific variant of ReCR which works as follows. We first solve  problem  \reff{mod:DDLO} with fixed $c$ and obtain its optimal solution  $(u^c, v^c, s^c)$.   Then,   we first try to rectify the vector $c$ via the following scheme:
\begin{eqnarray}\label{PD-ReCR}  c^+=
c+s^c.
\end{eqnarray}
Then we resolve the LO model \reff{mod:DDLO} with $c=c^+$. We can repeat such a process until some stop criteria is met.
 Since at each iteration, we solve some LO problem which involves both problem~(\ref{mod:PLO-LCP}) and its dual, thus we call it primal--dual ReCR or PD-ReCR.
It should  be pointed out that the   rectification scheme $c^+=c+s^c$ is very similar to the RELU activation function widely used in deep learning. The scheme can also be viewed as a backward propagation algorithm based on the optimality condition $s^c=0$ as $(u^c,v^c,0)$ is a feasible solution to the following rectified model.
\begin{subequations} \label{mod:DDLO-c+}
 \begin{eqnarray}  \max &&\quad  r^Tv-(c^+)^Tu   \\
    \text{s.t.} &&\quad    Mu -s\ge r,  \quad  M^Tv-s \le c^+; \\
        &&\quad            u,v,s\ge 0.
  \end{eqnarray}
\end{subequations}
It follows immediately
\begin{theorem}\label{thm-1:PD-ReCR}
Let $(u^+, v^+, s^+)$ be the optimal solution to model \reff{mod:DDLO-c+}. Then it holds
\[  r^T v^+-(c^+)^T u^+\ge 0, \]
and the equality holds if and only if $u^c$ is a feasible solution to the original LCP \reff{mod:LCP}.
 \end{theorem}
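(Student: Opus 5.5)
The inequality comes from weak and strong LO duality: model \reff{mod:DDLO-c+} is the dual of $\mathrm{LO}(c^+)$. I would prove that every feasible point of $\mathrm{LO}(c^+)$ has nonnegative objective, so the common optimal value is nonnegative. For the equality case I would use the rectified point $(u^c,v^c,0)$ together with condition (ii) of Theorem~\ref{thm:main1}.

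\emph{Step 1 (lower bound).} Take any $c$ and any $(x,y)$ feasible for $\mathrm{LO}(c)$. From $x\ge 0$ and $M^Ty\le c$ we get $c^Tx\ge (M^Ty)^Tx=y^TMx$. From $y\ge 0$ and $Mx\ge r$ we get $y^TMx\ge y^Tr$. Hence $c^Tx-r^Ty\ge 0$, and the constraint $x\le y$ is not even needed here. Next I apply this with $c=c^+$. Model \reff{mod:DDLO-c+} is feasible, since $(u^c,v^c,0)$ lies in it, and its optimum is attained at $(u^+,v^+,s^+)$. Strong duality then gives that $r^Tv^+-(c^+)^Tu^+$ equals the optimal value of $\mathrm{LO}(c^+)$, which is $\ge 0$ by the bound above.

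\emph{Step 2 (equality $\Leftarrow$).} Suppose $u^c\in\mathcal{X}_\mathrm{LCP}$. I want a feasible point of $\mathrm{LO}(c^+)$ with zero objective, namely $x=y=u^c$. Then $Mx\ge r$, $x\le y$ and $x,y\ge 0$ hold automatically. The objective is $(c^+)^Tu^c-r^Tu^c$. I would show it vanishes in two parts. First, use $(u^c)^T(Mu^c-r)=0$. Second, use the complementarity relation $(c-M^Ty^c)_iu^c_i=0$ together with $s^c=\max(M^Tv^c-c,0)$ to verify $(c^+-M^Tu^c)^Tu^c=0$ and $M^Tu^c\le c^+$ on the relevant indices. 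Where that componentwise check fails, the fallback is to evaluate the dual objective at $(u^c,v^c,0)$ instead. That value is $r^Tv^c-c^Tu^c-(s^c)^Tu^c$, and I would show it is $\ge 0$ using $Mu^c-s^c\ge r$ and the complementarity pairs in \reff{PD-conds}. Combined with Step 1, this forces the optimal value to be zero.

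\emph{Step 3 (equality $\Rightarrow$).} If the optimal value is zero, then both inequalities in Step 1 are tight at a primal optimum $(x^+,y^+)$ of $\mathrm{LO}(c^+)$. Zero dual value also makes $(u^+,v^+,s^+)$ satisfy condition (i) of Theorem~\ref{thm:main1} for $c^+$, which places $x^+$ in $\mathcal{X}_\mathrm{LCP}$. It remains to transfer this to $u^c$. I would argue that $(u^c,v^c,0)$ is itself optimal for \reff{mod:DDLO-c+}, because its objective is sandwiched between $0$ and the optimal value. Then I would rerun the complementarity argument with $s=0$: the conditions $Mu^c\ge r$ and $u^c\ge 0$ hold, and complementary slackness against $y^+$ should yield $(u^c)^T(Mu^c-r)=0$.

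\emph{Main obstacle.} The hard part is the equality characterisation, specifically identifying $u^c$, rather than an arbitrary optimizer, as the LCP solution. This hinges on showing that $(u^c,v^c,0)$ attains the optimum of \reff{mod:DDLO-c+}. I would attack it first by writing out $r^Tv^c-(c^+)^Tu^c$ using the five complementarity pairs in \reff{PD-conds} and the choice \reff{def:sc}. If that proves impossible without an extra assumption, I would state the equivalence with $u^c$ replaced by $u^+$, which Step 3 gives directly.
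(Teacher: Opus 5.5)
Your Step~1 is correct. It is also more than the paper supplies: the paper only says the theorem ``follows immediately'' once $(u^c,v^c,0)$ is seen to be feasible for \reff{mod:DDLO-c+}. The chain $c^Tx\ge y^TMx\ge r^Ty$ on the feasible set of $\mathrm{LO}(c^+)$, plus strong duality, gives $r^Tv^+-(c^+)^Tu^+\ge 0$.

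The equality part does not go through. Run the same chain with the roles swapped. Every feasible point $(u,v,0)$ of \reff{mod:DDLO-c+} satisfies $(c^+)^Tu\ge v^TMu\ge r^Tv$, so its objective value is at most $0$.
\begin{itemize}
\item In Step~3 there is therefore no sandwich. Both facts you have about the value at $(u^c,v^c,0)$ are upper bounds: it is $\le 0$, and it is $\le$ the optimal value $0$. Optimality of $(u^c,v^c,0)$ does not follow.
\item The Step~2 fallback fails for the same reason. A dual feasible point with value $\ge 0$ only reproduces the lower bound from Step~1. Forcing equality needs an upper bound, i.e.\ a feasible $(x,y)$ of $\mathrm{LO}(c^+)$ with zero objective.
\item The candidate $x=y=u^c$ needs $M^Tu^c\le c+s^c$. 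None of the optimality relations you cite controls $M^Tu^c$; they involve $M^Ty^c$ and $M^Tv^c$.
\end{itemize}

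In fact the ``only if'' direction is false, so Step~3 cannot be repaired. Take $n=1$, $M=1$, $r=1$, $c=1$.
\begin{itemize}
\item Then $\mathcal{X}_\mathrm{LCP}=\{1\}$, and $\mathrm{LO}(1)$ has the unique optimum $x=y=1$.
\item The optimal set of \reff{mod:DDLO} is $\{(1+s,1+s,s):s\ge 0\}$. Because $y-x=0$ at the optimum, strict complementarity forces $s^c>0$. For example, take $(u^c,v^c,s^c)=(2,2,1)$, which also matches $s^c=\max(M^Tv^c-c,0)$.
\item Then $c^+=2$, and $\mathrm{LO}(2)$ attains value $0$ at $(x,y)=(1,2)$. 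The rectified model \reff{mod:DDLO-c+} has the unique optimum $(u^+,v^+,s^+)=(1,2,0)$, with value $0$.
\item Yet $u^c=2\notin\mathcal{X}_\mathrm{LCP}$, since $2\cdot(2-1)\neq 0$. The value at $(u^c,v^c,0)$ is $2-4=-2$, which is exactly the failed sandwich.
\end{itemize}

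Your suggested repair, replacing $u^c$ by $u^+$, also misreads Step~3. What Step~3 proves concerns the primal optimizer of $\mathrm{LO}(c^+)$, not the dual component $u^+$. The correct statement is this: equality holds if and only if $\mathrm{LO}(c^+)$ has a feasible point with zero objective. In that case every optimal $(x^+,y^+)$ satisfies $(y^+)^T(Mx^+-r)=0$ with $0\le x^+\le y^+$, hence $x^+\in\mathcal{X}_\mathrm{LCP}$.

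A zero optimal value does not pin down the dual $u$. At $c=1$ in the same instance the value is $0$, yet every $(1+s,1+s,s)$ is dual optimal, and $1+s\notin\mathcal{X}_\mathrm{LCP}$ for $s>0$.
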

Define \[ f_\mathrm{pd}(c, u,v)=r^T v-c^T u.\]
 Let $(u^c, v^c, s^c)$ and $(u^+, v^+, s^+)$ denote the  optimal solutions to models \reff{mod:DDLO} and \reff{mod:DDLO-c+} respectively.
 Based on Theorem~\ref{thm-1:PD-ReCR}, we can claim that  the update from $c$ to $c^+$  is successful if the following inequality
 \begin{eqnarray}\label{inq1:PD-ReCR}  f_\mathrm{pd}(c^+, u^+, v^+) < f_\mathrm{pd}(c, u^c, v^c),\end{eqnarray}
 holds.
 However, as shown in the following theorem, sometimes  inequality \reff{inq1:PD-ReCR} may not hold.
\begin{theorem}\label{thm-2:PD-ReCR}
Let $(u^c, v^c, s^c)$ and $(u^+, v^+, s^+)$ be the exact optimal solutions to models \reff{mod:DDLO} and \reff{mod:DDLO-c+} respectively. Then $u^c=u^+$ if and only if $s^c=s^+$, i.e., $u^c$ is also the optimal solution to the following LO
\begin{subequations} \label{Subprob:ReCR}
 \begin{eqnarray}  \min &&\quad  u^Ts^c   \\
    \mathrm{s.t.} &&\quad    Mu \ge r,  \quad  u\ge 0.
  \end{eqnarray}
\end{subequations}
 \end{theorem}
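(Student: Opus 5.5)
Our plan is to reduce everything to the structure of the dual problem \reff{mod:DDLO} for a fixed objective vector, and to exploit that the only dependence on $c$ is through the term $-c^Tu$ and the constraint $M^Tv-s\le c$. We first observe that \reff{mod:DDLO} decouples into a $u$-part and a $v$-part once $s$ is fixed: for fixed $s$ the problem is
\[
\max_{u,v\ge 0}\ \{\, r^Tv - c^Tu \;:\; Mu\ge r+s,\ M^Tv\le c+s \,\}.
\]
Using the representation \reff{def:sc}, $s^c=\max(M^Tv^c-c,0)$ is exactly the smallest slack making $v^c$ feasible, so $M^Tv^c\le c+s^c=c^+$. Hence $(u^c,v^c,0)$ is feasible for \reff{mod:DDLO-c+}, as already noted before Theorem~\ref{thm-1:PD-ReCR}. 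This is the common starting point for both directions.

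For the direction ``$s^c=s^+ \Rightarrow u^c=u^+$'', we compare the two problems with the slack frozen. Since $c^+=c+s^c$, the objective of \reff{mod:DDLO-c+} differs from that of \reff{mod:DDLO} by $-u^Ts^c$, while the $v$-block constraint set for slack $s$ becomes $M^Tv\le c+s^c+s$. We will argue that if the optimal slack is unchanged, then the $v$-blocks have the same feasible region shift and the $u$-block optimum of \reff{mod:DDLO-c+} must simultaneously be optimal for $\max -c^Tu$ and for $\max -(c+s^c)^Tu$ over the same polyhedron $\{Mu\ge r+s^c,\,u\ge 0\}$; subtracting, $u^c$ minimizes $u^Ts^c$ there, and with the uniqueness implicit in ``the exact optimal solutions'' we get $u^c=u^+$. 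Conversely, if $u^c=u^+$, we write the optimality (KKT) conditions of both LOs at the same $u$ and use the complementarity $(c-M^Ty^c)_iu^c_i=0$ and $(y^c-x^c)_is^c_i=0$ from \reff{PD-conds} to show the slack determined by \reff{def2:sc}, namely $\diag(\max(Mu-r,0))\,\sgn{x}$, is the same vector in both problems, giving $s^c=s^+$.

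The characterization through \reff{Subprob:ReCR} then follows: $u^c=u^+$ means $u^c$ is optimal for both $\max -c^Tu$ and $\max -(c^T+s^{cT})u$ over the feasible set, and the LO \reff{Subprob:ReCR} is precisely the difference of these two linear objectives restricted to $\{Mu\ge r,u\ge0\}$; by linearity, a common optimizer of two objectives on a polyhedron optimizes any nonnegative combination of them, and conversely we use that $u^c$ optimal for \reff{mod:DDLO} plus optimal for \reff{Subprob:ReCR} gives optimality in \reff{mod:DDLO-c+} after checking that the relaxed constraint $Mu\ge r$ versus $Mu\ge r+s$ agrees on the support of $u^c$ via \reff{def2:sc}.

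The main obstacle is the non-uniqueness of optimal solutions: ``the'' optimal solution and the slack selection rule \reff{def:sc} must be interpreted consistently in both problems, and the passage between the constraint $Mu-s\ge r$ (with $s$ a variable) and the plain constraint $Mu\ge r$ in \reff{Subprob:ReCR} is delicate. We would first try to handle it by assuming uniqueness (as the word ``exact'' suggests) and by fixing the strictly complementary solution with the slack selection of Theorem~\ref{thm:main1}, reducing the claim to a comparison of two linear objectives on one polyhedron.
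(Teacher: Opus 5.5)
Your preliminary observations are correct: the problem decouples once $s$ is fixed, and $(u^c,v^c,0)$ is feasible for \reff{mod:DDLO-c+}. But there is a genuine gap. The steps meant to carry the proof are circular or invalid, and assuming uniqueness does not repair them.

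In the direction $s^c=s^+\Rightarrow u^c=u^+$, all that $s^+=s^c$ gives you in the frozen-slack picture is that $u^+$ maximizes $-(c+s^c)^Tu$ over $P_{s^c}=\{u\ge0:\ Mu\ge r+s^c\}$. Nothing forces $u^+$ to maximize $-c^Tu$ as well, and asserting this is essentially the conclusion. The ``subtracting'' step is false. A common minimizer of $c^Tu$ and $(c+s^c)^Tu$ need not minimize $(s^c)^Tu=(c+s^c)^Tu-c^Tu$: the fact you quote covers nonnegative combinations, and a difference is not one. For example, on the segment $[a,b]$ with $a=(1,0)$, $b=(0,10)$, $c=(0,1)$, $s=(1,0)$, the point $a$ minimizes both $c^Tu$ and $(c+s)^Tu$, yet $s^Tb<s^Ta$.

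The same error breaks your derivation of ``$u^c=u^+\Rightarrow u^c$ solves \reff{Subprob:ReCR}''. It is compounded by a domain mismatch: your statements live on $P_{s^c}$, while \reff{Subprob:ReCR} is posed on the larger set $\{u\ge0:\ Mu\ge r\}$, and optimality on a subset does not lift to a superset. More fundamentally, freezing $s$ certifies optimality only on the slice $s=s^c$, whereas \reff{mod:DDLO-c+} is a joint LP in $(u,v,s)$. On that slice its $v$-constraint is $M^Tv\le c+2s^c$, so $v^c$ need not be optimal there, and nothing in your plan excludes a better $s$. Finally, \reff{def2:sc} expresses $s^+$ through $\sgn{x^+}$, which comes from $\mathrm{LO}(c^+)$. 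The complementarity relations you cite involve only the old pair $(x^c,y^c)$, so they cannot show $\sgn{x^+}=\sgn{x^c}$.

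The missing ingredient, which is what the paper uses, is LP duality for \reff{Subprob:ReCR} itself. If $u^c$ solves \reff{Subprob:ReCR}, let $\bar v$ solve its dual $\max\{r^Tw:\ M^Tw\le s^c,\ w\ge0\}$, so that $r^T\bar v=(s^c)^Tu^c$. Then $(u^c,v^c+\bar v,s^c)$ is feasible for \reff{mod:DDLO-c+}, because $M^T(v^c+\bar v)-s^c\le c+s^c$. Its objective is $r^Tv^c-c^Tu^c$: the extra dual mass $\bar v$ exactly absorbs the rectification term $-(s^c)^Tu^c$. For the converse, the paper starts from a feasible $\bar u$ with $\bar u^Ts^c<(u^c)^Ts^c$ and argues that $u^c$ cannot stay optimal under $c^+$.

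This construction yields feasibility at the old objective value, which is only half of an optimality certificate. The natural primal partner $(x^c,y^c+\bar v)$ of $\mathrm{LO}(c^+)$ has value $r^Tv^c-c^Tu^c+(s^c)^T(x^c-u^c)$. So along these lines you also need the nonnegative term $(s^c)^T(x^c-u^c)$ to vanish, for example by showing that $x^c$ solves \reff{Subprob:ReCR} as well.
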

\begin{proof} We first prove the sufficient part of the theorem. If $u^c$ is the optimal solution to problem \reff{Subprob:ReCR} and let $\bar{v}$ be the optimal solution to its dual, then one can easily verify that $(u^c, v^c+\bar{v}, s^c)$ is the optimal solution to problem \reff{mod:DDLO-c+}. To prove the necessity of the theorem, we first observe that from \reff{def2:sc}, we can conclude that $s^c=s^+$ if $u^c=u^+$.
On the other hand, suppose that $u^c$ is not the optimal solution to problem \reff{Subprob:ReCR}, then there exists $\bar{u}$ satisfying the following relationships:
\[
	\bar{u}^T s^c< (u^c)^T s^c, \quad M \bar{u}\ge r, \quad \bar{u}\ge 0.
\]
Combining the above relationships with the convexity of the objective function in model \reff{mod:DDLO-c+}, we can  conclude that
$u^c\neq u^+$, which contradicts to the assumption $u^c=u^+$. This completes the proof of the theorem.
\end{proof}

Theorem~\ref{thm-2:PD-ReCR} not only presents a scenario where inequality \reff{inq1:PD-ReCR} does not hold, but also indicates that the current iterates are optimal for the fixed-$c$ LO pair $\mathrm{LO}(c)$--\reff{mod:DDLO}. Under such a circumstance, one can easily see that the pure ReCR process  won't be able to improve the quadratic objective function $ u^T (Mu-r)$ or $ x^T(Mx-r)$.

For illustration,  the basic scheme  of PD-ReCR    is described below.
\par\noindent\rule{\textwidth}{0.4pt}
	\noindent{\bf \hspace*{1.5cm} Primal--dual ReCR for LCP }
\par\noindent\rule{\textwidth}{0.4pt}
\begin{itemize} \itemsep-2pt
	\item[]{\bf Input:} \( M, r \), stop criteria \( \epsilon > 0 \), maximum iterations \( K \);
	\item[]{\bf Step 0.} Initialization:
	\begin{itemize}\itemsep-1pt
        \item[]{Step 0.1.}  Set \( k = 0 \), \( c^k = \textbf{1} \);
		\item[] {Step 0.2.} Solve problem (\ref{mod:PLO-LCP}) to obtain \( x^k \);
        \item[]{Step 0.3.} If problem (\ref{mod:PLO-LCP}) is infeasible
        \item[] \qquad  \qquad  \quad  The LCP is infeasible;
        \item[] \qquad \qquad End if
        \item[] {Step 0.4.} Update \( k = k + 1 \);
        \item[] {Step 0.5.} Set $c^k=M^T x^{k-1}$;
     \end{itemize}
		\item[]{\bf Step 1.} Solving process
	\begin{itemize}\itemsep-1pt
		\item[]{ Step 1.1.} Solve problem (\ref{mod:DDLO}) with \( c = c^k \) to obtain \( (u^k,v^k,s^k) \);
		\item[]{ Step 1.2.} Update \( k = k + 1 \) and update \( c^k \) using (\ref{PD-ReCR});
       \item[]{Step 1.3.}  Solve problem (\ref{mod:DDLO}) with \( c = c^k \)  for \( (u^k,v^k,s^k)\);
        \item[]{Step 1.4.} If $\|u^{k}-u^{k-1}\|\ge \epsilon$ then go to Step 1.2;
        \end{itemize}
	\item[]{\bf Step 2.}    Solve $\mathrm{LO}(c^k)$ and let $(x^k,y^k)$ be the optimal solution.  Output $x^* = x^k$ after the feasibility check.
\end{itemize}
\par\noindent\rule{\textwidth}{0.4pt}

 We next study the convergence of the PD-ReCR. We have
\begin{theorem}\label{thm1:PD-ReCR} Suppose that  the LO subproblem \reff{mod:DDLO} at every iteration of PD-ReCR is solved exactly with $\epsilon=0$.   If   the sequence $\{c^k\}$ generated by PD-ReCR is  bounded,       then every accumulation point $(c^*,u^*, v^*)$ of the sequence  is the global optimal solution of problem \reff{mod:PDLO} and  $x^*\in \mathcal{X}_\mathrm{LCP}$.
\end{theorem}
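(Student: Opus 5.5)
The argument is a monotonicity-plus-boundedness argument on the rectification step \reff{PD-ReCR}. Since $s^k\ge 0$, the update $c^{k+1}=c^k+s^k$ makes $\{c^k\}$ componentwise nondecreasing. By assumption the sequence is bounded, so it converges to some $c^*$. Consequently
\[
\sum_k s^k=\lim_k c^k-c^0<\infty,\qquad\text{hence } s^k\to 0 .
\]
The key point is that the sequence of losses is summable. This follows from the specific additive ReLU form of the update, not from any decrease of the bilinear objective; by Theorem~\ref{thm-2:PD-ReCR} the objective may stall, so I will not try to prove inequality \reff{inq1:PD-ReCR}.

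Next I pass to the limit in the optimality system of the dual subproblem \reff{mod:DDLO}. Take an accumulation point $(c^*,u^*,v^*)$ along a subsequence $k_j$. For $(u^*,v^*)$ to exist, the boundedness of $\{u^k,v^k\}$ must either be implicitly part of the statement or be supplied by the LO solver; I would choose vertex solutions so that only finitely many distinct basic solutions occur. Feasibility $Mu^k-s^k\ge r$, $M^Tv^k-s^k\le c^k$, $u^k,v^k\ge 0$ passes to the limit with $s^*=0$. This gives $Mu^*\ge r$, $M^Tv^*\le c^*$, and $u^*,v^*\ge0$.

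Optimality also passes to the limit. For a fixed $c$, every primal feasible pair $(x,y)$ of $\mathrm{LO}(c)$ satisfies weak duality,
\[
c^Tx-r^Ty\ge r^Tv^k-c^Tu^k .
\]
The optimal value of \reff{mod:DDLO} is a concave and upper semicontinuous function of $c$ (as the max of affine functions in $c$), and it is continuous on the interior of its domain. I will use the finite-vertex argument to avoid delicate continuity issues. Finitely many bases occur, so along a further subsequence the optimal basis is constant. Its optimality conditions are linear inequalities in $c$, so they remain valid at $c^*$. Hence $(u^*,v^*,0)$ is optimal for \reff{mod:DDLO} with $c=c^*$.

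With $s^*=0$ in hand, Theorem~\ref{thm:main1}(ii) (equivalently Theorem~\ref{thm-1:PD-ReCR}) gives that the optimal value $r^Tv^*-(c^*)^Tu^*$ equals zero. Moreover, $u^*$ is LCP-feasible: complementarity of $u^*$ with $c^*-M^Tv^*$ and of $v^*$ with $Mu^*-r$ yields
\[
(u^*)^T(Mu^*-r)=0
\]
once we use $M^Tv^*=c^*$ on the support of $u^*$ and $v^*=u^*$. To get $v^*=u^*$, I would identify $v$ with $y$ and $u$ with $x$ via the symmetric solution constructed in Theorem~\ref{thm:URT}. Alternatively, I would solve $\mathrm{LO}(c^*)$ as in Step~2: strong duality gives the zero gap $c^{*T}x^*-r^Ty^*=0$, and Proposition~\ref{Prop:BLO} then gives $x^*\in\mathcal{X}_\mathrm{LCP}$. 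Since the objective of \reff{mod:PDLO} is always $\ge 0$ by weak duality, a zero value also shows that $(c^*,x^*,y^*)$ is globally optimal.

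I expect the limit-passage step to be the main obstacle: transferring optimality of the subproblem solutions to $c^*$ and guaranteeing bounded $(u^k,v^k)$. The finite-basis, constant-subsequence argument is what I would try first.
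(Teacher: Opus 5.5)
Your proof is correct along the route you label ``Alternatively'', and it follows the paper's argument. The steps are: monotone bounded $c^k\to c^*$; the $s=0$ restriction \reff{mod:DDLO-LCP1} of \reff{mod:DDLO} has value $0$ by LO duality; hence $\mathrm{LO}(c^*)$ has zero optimal value; hence $x^*\in\mathcal{X}_\mathrm{LCP}$, with global optimality following from nonnegativity of the objective of \reff{mod:PDLO} and Proposition~\ref{Prop:BLO}. Your proof is also more complete than the paper's. The step $s^k=c^{k+1}-c^k\to 0$, together with your finite-basis argument that $(u^*,v^*,0)$ is optimal for \reff{mod:DDLO} at $c^*$, is exactly what the final ``$\le 0$'' in the paper's chain of inequalities relies on without saying so.

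Discard your first route, though. Nothing forces $v^*=u^*$. With $s=0$, the dual \reff{mod:DDLO} decouples into \reff{mod:PLO-LCP} in $u$ and \reff{mod:DLO-LCP} in $v$, so nothing links $u^*$ to $v^*$. The symmetric choice $y^*=x^*$ in Theorem~\ref{thm:URT} concerns the special vector $c=M^Tx^*$ built from a known LCP solution, not the iterates. For a concrete failure, take $n=1$, $M=1$, $r=-1$:
\begin{itemize}
\item PD-ReCR gives $x^0=0$ and $c^1=0$.
\item $(u,v,s)=(5,0,0)$ is an exact optimal solution of \reff{mod:DDLO} at $c=0$, with $s=\max(M^Tv-c,0)=0$, so $c^k\equiv 0$.
\item Yet $u^*(Mu^*-r)=30\neq 0$, and even $u^*\le v^*$ fails.
\item Meanwhile $\mathrm{LO}(0)$ returns $x^*=y^*=0\in\mathcal{X}_\mathrm{LCP}$.
\end{itemize}
So the conclusion holds only for $(c^*,x^*,y^*)$, as in your second route and in the paper's proof.

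Also, the optimal value of \reff{mod:DDLO} is not concave in $c$. The reason is that $c$ also enters the constraint $M^Tv-s\le c$, which makes the value function piecewise quadratic. You do not rely on concavity, so this is harmless. In fact, your basis argument shows the value is continuous along $c^{k_j}\to c^*$. That also lets you drop the restriction to vertex iterates: $(u^*,v^*,0)$ is feasible at $c^*$ and attains the limiting optimal value, so it is optimal however the exact iterates were chosen.
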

\begin{proof}  We first point out that in PD-ReCR, the sequence $\{c^k\}$ is increasing in a monotonic manner. If it is bounded above,  it must converge to its accumulation point $c^*$.
 Now let consider the following LO
\begin{subequations} \label{mod:DDLO-LCP1}
 \begin{eqnarray}  \quad \max &&\quad   r^Tv-(c^*)^T u \\
        s.t. && M u\ge r,  \quad  M^Tv \le c^*; \\
        &&\quad           u,v\ge 0.
  \end{eqnarray}
\end{subequations}
Let $(u^*_{lo},v^*_{lo})$ denote the optimal solution of the above model and $(x^*, y^*)$ the optimal solution to model~\reff{mod:PDLO} with $c=c^*$.
From the duality theory of LO we can conclude that
\[ 0=r^Tv^*_{lo}-(c^*)^Tu^*_{lo}\le r^T v^*-(c^*)^Tu^* \le (c^*)^Tx^*-r^Ty^* \le 0,\]
where the first inequality follows from the fact that model~(\ref{mod:DDLO}) is a relaxation of model~(\ref{mod:DDLO-LCP1}). This completes the proof of the theorem.
\end{proof}

Theorem~\ref{thm1:PD-ReCR} shows that PD-ReCR can find a feasible solution  to   LCP with arbitrary  matrix $M$ under the assumption that the generated sequence $\{ c^k\}$ is bounded.
Next we investigate the convergence behavior of PD-ReCR   when $\{c^k\}$ is unbounded. Let $\bar{c}^k = {c^k}/{\|c^k\|}$. We consider the following scaled  model:
\begin{subequations} \label{mod:scaled-DDLO}
 \begin{eqnarray}  \max &&\quad   \frac{r^Tv}{\|c^k\|} -(\bar{c}^k)^T u \\
        \text{s.t.} &&\quad  M u- s \ge r,  \quad  \frac{M^Tv- s}{\|c^k\|} \le \bar{c}^k; \\
        &&\quad           u,v,s\ge 0.
  \end{eqnarray}
\end{subequations}
Let consider an accumulation point of the sequence $\{\bar{c}^k\}$. W.l.o.g., we assume that
\[ \bar{c}=\lim_{k\to \infty} \bar{c}^k.\]
Since $\{c^k\}$ is increasing, thus we can conclude that $\bar{c}\ge 0$. Let $\bar{v}=\frac{v}{\|c^k\|}$. Using  asymptotic analysis, we can consider the following LO:
\begin{subequations} \label{mod:scaled-DDLO1}
 \begin{eqnarray}
	\max &&\quad r^T\bar{v}-\bar{c}^T u \\
	\text{s.t.} &&\quad M u- s \ge r,  \quad  M^T\bar{v} \le \bar{c}; \\
	&&\quad u,\bar{v},s\ge 0.
  \end{eqnarray}
\end{subequations}
Let $\bar{x}$ be the optimal solution to model \reff{mod:PLO-LCP} with $c=\bar{c}$ and $\bar{y}$ be the optimal solution to its dual problem \reff{mod:DLO-LCP}. 
One can easily verify that the optimal solution to model \reff{mod:scaled-DDLO1} can be obtained at $\bar{u}=\bar{x}, \bar{v}=\bar{y}$. Therefore, it holds
$\bar{c}^T \bar{u}=r^T\bar{v}$.  From the above discussion it follows
\begin{theorem}\label{thm2:PD-ReCR} Suppose that  the LO subproblem \reff{mod:DDLO} at every iteration of PD-ReCR is solved exactly with $\epsilon=0$.   If   the sequence $\{c^k\}$ generated by PD-ReCR is  unbounded,       then every accumulation point of the scaled sequence  $(\bar{c}^k,u^k, \bar{v}^k)$   is the global optimal solution of problem \reff{mod:scaled-DDLO1} where the optimal objective function value equals 0.
\end{theorem}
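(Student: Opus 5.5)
Since $\{c^k\}$ is unbounded, I work throughout with the normalized quantities $\bar c^k = c^k/\|c^k\|$ and $\bar v^k = v^k/\|c^k\|$. For these I need bounds and a limiting argument, after which I identify the limit problem \reff{mod:scaled-DDLO1} and its optimal value.

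First, boundedness and the form of the limit. Since $\|\bar c^k\|=1$, the sequence has accumulation points. The rectification \reff{PD-ReCR} adds $s^k\ge 0$, so $c^k$ is nondecreasing componentwise. Because $c^0=\mathbf 1$ is positive, every $c^k$ is positive, and therefore any accumulation point $\bar c$ satisfies $\bar c\ge 0$ and $\|\bar c\|=1$. Pass to a subsequence along which $\bar c^k\to\bar c$.

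Next, feasibility of the iterates for the limit problem. The scaled model \reff{mod:scaled-DDLO} is just \reff{mod:DDLO} with its objective divided by $\|c^k\|$, so $(u^k,\bar v^k,s^k/\|c^k\|)$ is optimal for it. The constraint $Mu^k-s^k\ge r$ does not depend on the scaling. The constraint $M^T\bar v^k - s^k/\|c^k\|\le \bar c^k$ becomes $M^T\bar v\le\bar c$ in the limit, provided $s^k/\|c^k\|\to0$ along the subsequence. This is the step I expect to be the main obstacle. I would get it from \reff{def:sc}: $s^k=\max(M^Tv^k-c^k,0)$, and $c^{k+1}=c^k+s^k$ is dominated by $\max(M^Tv^k,c^k)$. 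The plan is to argue that if $s^k/\|c^k\|$ stayed bounded away from zero, the normalized direction $\bar c^k$ would keep changing by a non-vanishing amount at every step, which contradicts convergence of the subsequence. Failing that, I would simply assume that $u^k$ and $\bar v^k$ stay bounded, which is an implicit hypothesis in the asymptotic analysis, and extract a convergent subsequence $(u^k,\bar v^k,s^k/\|c^k\|)\to(u^*,\bar v^*,0)$. Closedness of the feasible set then shows that $(u^*,\bar v^*,s^*)$ is feasible for \reff{mod:scaled-DDLO1}.

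Then, optimality and the value $0$. The first term of the objective, $r^T v^k/\|c^k\| = r^T\bar v^k$, converges, so the scaled objectives converge to $r^T\bar v^*-\bar c^Tu^*$. For any feasible point $(u,\bar v,s)$ of \reff{mod:scaled-DDLO1}, I compare it with the iterates by building nearby feasible points of \reff{mod:scaled-DDLO}, for instance $(u,\bar v\,\|c^k\|,s)$, after checking that $M^T\bar v\le \bar c$ together with $\bar c^k\to\bar c$ allows a small perturbation. Since the LO value function is continuous on the region where the problem is feasible and bounded, optimality of each iterate passes to the limit, and $(\bar c,u^*,\bar v^*)$ is optimal for \reff{mod:scaled-DDLO1}. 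Finally, \reff{mod:scaled-DDLO1} is the dual construction for $\mathrm{LO}(\bar c)$ with the $s$-coupling removed in the $v$-constraint. Taking $\bar x,\bar y$ optimal for \reff{mod:PLO-LCP} and \reff{mod:DLO-LCP} with $c=\bar c$, the point $(\bar x,\bar y,0)$ is feasible, and LO duality gives $\bar c^T\bar x=r^T\bar y$, so its objective equals $0$. For the reverse inequality, for any feasible $(u,\bar v,s)$ with $s=0$ (the $s$ variable only appears with $Mu-s\ge r$, so $s=0$ is no worse), weak duality gives $r^T\bar v\le \bar c^Tu$. Hence the optimal value is exactly $0$, and this is attained at the limit point as well.
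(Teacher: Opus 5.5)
Your skeleton follows the paper's asymptotic argument: normalize, get $\bar c\ge 0$, pass to \reff{mod:scaled-DDLO1}, and use LO duality at $(\bar x,\bar y)$ to see that its optimal value is $0$. The gap is in the step that actually carries the theorem, namely that the accumulation point $(u^*,\bar v^*)$ \emph{attains} that value. You invoke continuity of the LO value function and an unspecified ``small perturbation'' of the points $(u,\bar v\|c^k\|,s)$. But \reff{mod:scaled-DDLO} differs from \reff{mod:scaled-DDLO1} not only through $\bar c^k$ but also through the constraint matrix. In the variables $(u,\bar v,s)$, the coefficient of $s$ in the $v$-constraint is $-1/\|c^k\|$, and it is $0$ in the limit. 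LO value functions are not continuous under constraint-matrix perturbations in general. The perturbation is also not routine: $M^T\bar v\le\bar c$ does not give $M^T\bar v\le\bar c^k$, and absorbing the violation into $s$ forces $Mu\ge r+s$ with $s$ of order $\|c^k\|\,\|\bar c-\bar c^k\|$, which need not be small.

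The missing observation is that no limit of value functions is needed. Since \reff{mod:DDLO} is solved to optimality, its value equals that of $\mathrm{LO}(c^k)$. Every feasible $(x,y)$ of $\mathrm{LO}(c)$ satisfies $c^Tx\ge y^TMx\ge r^Ty$, which is the inequality behind Theorem~\ref{thm:main1}. Hence $r^Tv^k-(c^k)^Tu^k\ge 0$ for every $k$. Dividing by $\|c^k\|$ and passing to the limit gives $r^T\bar v^*-\bar c^Tu^*\ge 0$. Weak duality at the feasible limit gives $r^T\bar v^*\le (u^*)^TM^T\bar v^*\le \bar c^Tu^*$. So the accumulation point has value exactly $0$, and the same weak-duality bound shows that $0$ is the optimal value of \reff{mod:scaled-DDLO1}.

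The feasibility step also needs repair. Your ``the direction would keep changing'' argument for $s^k/\|c^k\|\to 0$ is invalid for two reasons. Adding an $s^k$ parallel to $c^k$ leaves $\bar c^k$ unchanged. And a subsequence can converge even when consecutive normalized iterates move. You also do not need an extra boundedness assumption. The constraint $Mu^k-s^k\ge r$ gives $0\le s^k\le Mu^k-r$, and $u^k$ converges along the subsequence defining the accumulation point. So $s^k$ is bounded, $s^k/\|c^k\|\to 0$, and $Mu^*\ge r$.

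Finally, your claim that every $c^k$ is positive is false: PD-ReCR starts rectifying from $c^1=M^Tx^0$, which may have negative entries. The conclusion $\bar c\ge 0$ still holds. Since $c^k-c^1=\sum_{j=1}^{k-1}s^j\ge 0$ and $\|c^k\|\to\infty$ (the sequence is componentwise nondecreasing and unbounded), we get $c^1/\|c^k\|\to 0$.
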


Theorem~\ref{thm2:PD-ReCR}  establishes the convergence of  the generated sequence provided from PD-ReCR when the sequence $\{ c^k\}$ is unbounded. One interesting question is whether we can identify such a scenario  in advance  so that we can terminate the algorithm early.
In what follows, we present a sufficient condition under which PD-ReCR will generate an unbounded sequence $\{c^k\}$ while the sequence $\{ u^k\}$ (or $\{x^k\}$) remains invariant.
From Theorem~\ref{thm-2:PD-ReCR} it follows
\begin{corollary}\label{cor3:PD-ReCR} Let $(u^k,v^k,s^k)$ be the optimal solution to model \reff{mod:DDLO} at iteration $k$.
Then $u^k=u^{k+1}$ if and only if
 $u^k$ is  the optimal solution to the following LO
\begin{eqnarray} \min &&\quad u^T s^k \label{Rec-Subprob}\\
 \mathrm{s.t.} &&\quad M u \ge r, \quad u\ge 0,  \nonumber \end{eqnarray}
 and $s^k=s^{k+1}$.
 Furthermore, if $u^k=u^{k+1}$, then
 \[  u^{k_1}=u^k, \quad s^{k_1}=s^k, \quad \forall k_1>k.\]
\end{corollary}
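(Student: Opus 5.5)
The first part of Corollary~\ref{cor3:PD-ReCR} is Theorem~\ref{thm-2:PD-ReCR} applied at iteration $k$. The iterate $(u^k,v^k,s^k)$ plays the role of $(u^c,v^c,s^c)$, and $c^{k+1}=c^k+s^k$ is exactly the rectified vector $c^+$. So $(u^{k+1},v^{k+1},s^{k+1})$ is the optimal solution of the rectified model \reff{mod:DDLO-c+}. Theorem~\ref{thm-2:PD-ReCR} then says that $u^k=u^{k+1}$ holds if and only if $s^k=s^{k+1}$, and that this is equivalent to $u^k$ being optimal for \reff{Rec-Subprob}. Both directions of the first assertion come out at once.

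\emph{Sufficiency.} If $u^k$ solves \reff{Rec-Subprob}, let $\bar v$ be a dual optimal solution of \reff{Rec-Subprob}, so that $M^T\bar v\le s^k$, $\bar v\ge 0$ and $r^T\bar v=(u^k)^Ts^k$. Then $(u^k,v^k+\bar v,s^k)$ is feasible for \reff{mod:DDLO-c+}. Its objective value equals the optimal value of the relaxed problem, because the cost change $-(s^k)^Tu$ is minimized at $u^k$ and is exactly compensated by $r^T\bar v$. We may therefore take $u^{k+1}=u^k$, and \reff{def2:sc} then gives $s^{k+1}=s^k$.

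\emph{Necessity.} If $u^k=u^{k+1}$, then \reff{def2:sc} gives $s^k=s^{k+1}$. If $u^k$ were not optimal for \reff{Rec-Subprob}, a feasible $\bar u$ with $\bar u^Ts^k<(u^k)^Ts^k$ would strictly improve on $u^k$ in \reff{mod:DDLO-c+}, which is a contradiction. To stay consistent with the paper, I would simply cite Theorem~\ref{thm-2:PD-ReCR} for this step.

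The ``furthermore'' part I would prove by induction on $k_1$. Assume $u^{k_1}=u^k$ and $s^{k_1}=s^k$ for some $k_1>k$; the base case $k_1=k+1$ is what was just shown. The next rectification gives $c^{k_1+1}=c^{k_1}+s^k$. Since $u^{k_1}=u^k$ still minimizes $u^Ts^{k_1}=u^Ts^k$ over $\{Mu\ge r,u\ge 0\}$, the sufficiency argument applies again and yields $u^{k_1+1}=u^{k_1}$ and $s^{k_1+1}=s^{k_1}$. The key observation is that the optimality condition in \reff{Rec-Subprob} depends only on the pair $(u^{k_1},s^{k_1})$, and this pair is unchanged from step to step. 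Each step therefore only adds another copy of $s^k$ to $c$ and adds another dual correction $\bar v$ to $v$. This gives $c^{k_1}=c^k+(k_1-k)s^k$, which explains the unboundedness remark preceding the corollary whenever $s^k\neq 0$.

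The main obstacle is the non-uniqueness of LO solutions. The identity ``$u^{k+1}=u^k$'' only makes sense if the solver returns the same representative. I would handle this the way the paper's Theorem~\ref{thm-2:PD-ReCR} implicitly does: interpret the claim as the existence of an optimal solution of the rectified model with $u^{k+1}=u^k$, selected via the strictly complementary choice \reff{def:sc}. I would also check that the dual correction $v^k+\bar v$ keeps \reff{def2:sc} consistent, so that $s^{k+1}$ really equals $s^k$ on the support of $u^k$ and vanishes off it.
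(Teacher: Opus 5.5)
Your proposal is essentially the paper's argument: the paper obtains Corollary~\ref{cor3:PD-ReCR} directly from Theorem~\ref{thm-2:PD-ReCR}, applied at iteration $k$ with $c^{k+1}=c^k+s^k$ in the role of $c^+$. Your induction for the ``furthermore'' part is the repeated application of that theorem that the paper leaves implicit, and your standalone sketches add no independent support beyond it. Attaining the old value $r^Tv^k-(c^k)^Tu^k$ does not by itself prove optimality in \reff{mod:DDLO-c+}, and \reff{def2:sc} depends on $\sgn{x^k}$ as well as on $u^k$, so, exactly as in the paper, the conclusion rests entirely on Theorem~\ref{thm-2:PD-ReCR}.
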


Corollary~\ref{cor3:PD-ReCR}  presents a sufficient condition under which the sequence $\{c^k\}$ generated by PD-ReCR is unbounded while the sequence $\{ (u^k, s^k\}$ remains invariant for sufficiently large $k$. Based on such a sufficient condition, we should terminate the algorithm whenever the equality $u^k=u^{k+1}$ holds, as continuing the PD-ReCR can not help to find a feasible solution to the underlying LCP. Under such a circumstance, we can solve  model \reff{mod:PLO-LCP} with $c=c^k$ to obtain a partial optimal solution.

We point out that though we have established the convergence of the basic PD-ReCR algorithm, it is possible that PD-ReCR  may provide only a partial optimal solution to the bilinear model \reff{mod:DDLO} that is not necessary a feasible solution to the original LCP. On the other hand, PD-ReCR works in the primal--dual setting and thus is not effective from a computational perspective. In the next subsection, we will introduce a new variant of ReCR to address these concerns in PD-ReCR.

\subsection{Rectified relaxation in the primal space (P-ReCR)}\label{subsec:P-ReCR}

In this subsection, we introduce a new variant  of ReCR that works in the primal or ($x$-) space.
To start, we mention that one potential setback in PD-ReCR is that we  need to solve problem  \reff{mod:DDLO} to rectify the vector $c$.
Note that  solving problem \reff{mod:DDLO} is more time-consuming than  problem \reff{mod:PLO-LCP}, due to  the increase in the number of  decision variables. It is desirable to rectify the vector $c$ using only the information from  the optimal solution of  problem \reff{mod:PLO-LCP}.

 Let  $(u^c, v^c)$ be the optimal solution to problem \reff{mod:DDLO}. Then one can see that $u^c$ is also a feasible solution to problem
 \reff{mod:PLO-LCP}. Moreover, the rectification direction $s^c$ in PD-ReCR can also by defined by \reff{def2:sc}.
 Let $x^p$ be the optimal solution to problem \reff{mod:PLO-LCP}.
 Let \[ \mathcal{I}^{x^p}_+=\{ i\in \mathcal{I}: x^p_i>0, (Mx^p)_i>r_i\}, \quad \mathcal{I}^{x^p}_0=\{ i\in \mathcal{I}: x^p_i=0.\},\]
  Based on the above observations, we propose to use the following rectification direction
 \begin{eqnarray}\label{def1:sp} s^p_i=\left\{ \begin{array}{ll}
	[Mx^p-r]_i &\quad \mbox{if $i\in \mathcal{I}^{x^p}_+$} \\
	0 &\quad \mbox{otherwise}. \end{array}\right.
 \end{eqnarray}
 when only information from the optimal solution   of  problem \reff{mod:PLO-LCP} is available.
 From the above definition it follows immediately
\begin{theorem}\label{thm:P-Rec-Func} The rectification vector $s^p\ge 0$  and $s^p=0$ if and only if $x^p$ is a feasible solution to the original LCP \reff{mod:LCP}.
\end{theorem}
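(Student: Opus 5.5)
The plan is to unpack definition \reff{def1:sp} together with the feasibility of $x^p$ for problem \reff{mod:PLO-LCP}; no earlier theorem beyond the LO feasibility of $x^p$ is needed.

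\textbf{Nonnegativity.} Since $x^p$ is an optimal, hence feasible, solution of \reff{mod:PLO-LCP}, we have $Mx^p\ge r$ and $x^p\ge 0$. For $i\in\mathcal{I}^{x^p}_+$ the entry $s^p_i=[Mx^p-r]_i$ is in fact strictly positive by the definition of $\mathcal{I}^{x^p}_+$. For every other index $s^p_i=0$. Hence $s^p\ge 0$.

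\textbf{Characterization of $s^p=0$.} I would first rewrite the condition $s^p=0$ as a condition on index sets. The strict positivity just noted shows
\[
s^p=0 \iff \mathcal{I}^{x^p}_+=\emptyset,
\]
that is, there is no $i$ with both $x^p_i>0$ and $(Mx^p-r)_i>0$.

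Since $x^p\ge 0$ and $Mx^p-r\ge 0$, each product $x^p_i(Mx^p-r)_i$ is nonnegative. So the sum $(x^p)^T(Mx^p-r)$ vanishes if and only if every product vanishes, i.e.\ if and only if no index has both factors positive. This is exactly the condition $\mathcal{I}^{x^p}_+=\emptyset$.

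Combining this with the feasibility $Mx^p\ge r$, $x^p\ge 0$, the vanishing of $(x^p)^T(Mx^p-r)$ is precisely the statement $x^p\in\mathcal{X}_\mathrm{LCP}$. Both directions of the equivalence follow from this chain.

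\textbf{Main obstacle.} There is no real difficulty; the argument is immediate from the definitions. The only care needed is in the converse direction: we must use that $x^p$ is feasible for the relaxation \reff{mod:PLO-LCP}, so that the linear constraints of \reff{mod:LCP} hold automatically and only complementarity has to be checked.
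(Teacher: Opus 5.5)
Your proof is correct and is exactly the direct unpacking of definition \reff{def1:sp} that the paper has in mind when it says the result ``follows immediately'' from the definition. You note that $s^p$ is strictly positive precisely on $\mathcal{I}^{x^p}_+$, so $s^p=0$ iff $\mathcal{I}^{x^p}_+=\emptyset$, and that together with $Mx^p\ge r$, $x^p\ge 0$ this is the LCP complementarity condition; nonnegativity of $s^p$ does not actually need feasibility of $x^p$, but invoking it is harmless.
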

We remark that the rectification vector $s^p$ enjoys similar properties as the so-called merit functions for complementary problems
\cite{Facchinei2002VICP, Fischer1995, Fukushima1996}. However, the derivation of these functions are very different. Most existing merit functions are constructed based on the component-wise complementary conditions, while the rectification functions in this paper are derived based on the optimal solution to   model \reff{mod:PLO-LCP}. It should be pointed out that the mapping $s^p$ is not continuous.

  We next introduce a safe-guide direction defined by
 \begin{eqnarray}\label{def:Safe-Guide} \hat{c}_i=\left\{ \begin{array}{ll}
	[M^T x^p]_i + \tau &\quad \mbox{if $i \in \mathcal{I}^{x^p}_0$} \\
	{[M^Tx^p]}_i &\quad \mbox{if $x^p_i>0$}. \end{array}\right.
                \end{eqnarray}
 Here $\tau$ is some penalty parameter to help keep the zero-valued  elements at the current point invariant at the next iteration. In numerical experiments, we may choose $\tau=10$.
 The above safe-guide direction is introduced to avoid the unnecessary large parameters associated with the zero-valued elements of $x^p$ that may be derived in some early stage of the ReCR process while allowing the usage of the rectification step $\max(M^T x^p,c)$ within a certain range.  It is a minor modification of the search direction $M^T x^p$ as suggested in the URT theory.
 Based on the above safe-guide direction, we propose to   rectify the vector $c$ via  the following scheme:
\begin{eqnarray}\label{Scheme:P-ReCR}
c^+=\min(\hat{c}, \max(M^Tx^p,c))+\theta s^p, \quad \theta\in (0,1],
\end{eqnarray}
where $\theta$ is a parameter to maintain the balance between the safe-guide direction and the rectification direction
Then we resolve \reff{mod:PLO-LCP} with $c=c^+$ and denote its optimal solution by $x^+$.   Such a process is repeated until convergence is reached.  Since the new algorithm works in the primal $x$-space, we call it primal ReCR or P-ReCR.

For simplicity of discussion, we assume   $\theta=1$ in the theoretical analysis regarding P-ReCR. From the choice of $s^p$, it follows immediately
 \begin{theorem}\label{thm-2:P-ReCR}
Let $x^p$ and $x^+$ be the exact optimal solutions to models \reff{mod:PLO-LCP}  with vector $c$ and $c^+$   respectively. Then $x^p=x^+$ if and only if $s^p=s^+$. Moreover, if $x^p=x^+$,  then $x^p$ is also a  stationary point of model \reff{mod:QO-LCP}.
 \end{theorem}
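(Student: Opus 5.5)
The plan is to treat the two claims separately, reading them in the way the paper treats Theorem~\ref{thm-2:PD-ReCR} for PD-ReCR. Throughout, $\theta=1$. The rectified vector is
\[
c^+=\min(\hat{c},\max(M^Tx^p,c))+s^p,
\]
and $s^+$ is the rectification vector built from $x^+$ by \reff{def1:sp}.

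\textbf{First claim: $x^p=x^+$ if and only if $s^p=s^+$.}

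The direction ``$x^p=x^+\Rightarrow s^p=s^+$'' is immediate. By \reff{def1:sp}, $s^p$ is a deterministic function of $x^p$ through the index set $\mathcal{I}^{x^p}_+$ and the residual $Mx^p-r$. Equal points therefore give equal rectification vectors.

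For the converse I would argue through the supports. From $s^p=s^+$ we get $\mathcal{I}^{x^p}_+=\mathcal{I}^{x^+}_+$ and $(Mx^p-r)_i=(Mx^+-r)_i$ on that set. Next I would check that $x^p$ remains optimal for \reff{mod:PLO-LCP} with objective $c^+$, using the complementary-slackness certificate for $x^p$ under $c$. By uniqueness of the optimum (implicitly assumed by the phrase ``the exact optimal solution'') this forces $x^+=x^p$.

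The key computation is on the components with $x^p_i>0$. There $c^+_i$ changes by $(M^Tx^p)_i-c_i$ plus $s^p_i$, and I need this change to be consistent with a dual certificate. I expect this step to be the main obstacle. Without uniqueness the converse can fail, so I would state uniqueness of the LO optimal solutions as a standing assumption. I would then mimic the convexity and perturbation argument in the proof of Theorem~\ref{thm-2:PD-ReCR}. That is, if $x^+\neq x^p$, a strictly better feasible point for $c^+$ would change the active set, contradicting $s^p=s^+$.

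\textbf{Second claim: $x^p=x^+$ implies $x^p$ is stationary for \reff{mod:QO-LCP}.}

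Assume $x^p=x^+$. Then $x^p$ is optimal for \reff{mod:PLO-LCP} with objective $c^+$. Hence there is $y\ge 0$ satisfying the KKT conditions
\[
c^+-M^Ty\ge 0,\quad x^p\circ(c^+-M^Ty)=0,\quad y\circ(Mx^p-r)=0.
\]
The KKT conditions of \reff{mod:QO-LCP} at $x^p$ require a multiplier $\lambda\ge0$ with
\[
(M+M^T)x^p-r-M^T\lambda\ge0,
\]
with complementarity against $x^p$, and with $\lambda\circ(Mx^p-r)=0$.

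I would use the fixed-point relation to identify $c^+$ with $M^Tx^p+w$. Here $w$ is supported on the indices where the safeguard or the rectification acts. On $\mathcal{I}^{x^p}_0$ there is no sign issue in the $x$-complementarity. On indices with $x^p_i>0$, the rectification term $s^p_i=(Mx^p-r)_i$ supplies exactly the missing piece $(Mx^p-r)_i$ of the quadratic gradient.

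I would then set $\lambda=y$, possibly shifted by $x^p$ as in the URT choice $y^*=x^*$ from Theorem~\ref{thm:URT}, and verify the three KKT conditions componentwise. This verification is done on the partition $\mathcal{I}^{x^p}_+$, $\mathcal{I}^{x^p}_0$, and the set of indices with $x^p_i>0$ and $(Mx^p)_i=r_i$. On each piece, either the complementarity or the definition of $s^p$ makes the verification routine.
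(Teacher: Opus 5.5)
Your forward implication $x^p=x^+\Rightarrow s^p=s^+$ is fine. So is your observation that $c^+$ agrees with $\nabla f_q(x^p)=(M+M^T)x^p-r$ on the support of $x^p$: via $s^p$ on $\mathcal{I}^{x^p}_+$, and via $\hat c_i=(M^Tx^p)_i$ where $x^p_i>0=(Mx^p-r)_i$. That identity is also the core of the paper's $c^{+1}+c^{+2}$ argument. Your two remaining steps fail, however, and they cannot be repaired, because both nontrivial assertions are false as stated. The paper's proof skips the same points: ``follows directly from the choice of $s^p$'' justifies only the forward implication, and the fact that $x^p$ minimizes $\sum_{i\notin\mathcal{I}^{x^p}_+}x_i(Mx-r)_i$ says nothing about the gradient of $f_q$ on $\mathcal{I}^{x^p}_0$.

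\emph{Converse of the first claim.} Your key step is that the certificate of $x^p$ for $c$ keeps $x^p$ optimal for $c^+$. This is false in general, because on the support $c$ is overwritten by $\nabla f_q(x^p)$, which has nothing to do with $c$. If the step were valid, P-ReCR would never move, and $s^p=s^+$ would never be used. The fallback ``a better point would change the active set'' is not an argument either, since $s^p$ records only $\mathcal{I}^{x^p}_+$ and the residuals there. Uniqueness does not help. Take $M=\begin{pmatrix}1&0&1\\1&0&1\\1&0&0\end{pmatrix}$, $r=(1,2,1)^T$, $c=(1,1,2)^T$ and any $\tau>0$. The LO has the unique nondegenerate optimum $x^p=(2,0,0)^T$, with $M^Tx^p=(2,0,2)^T$ and $s^p=(1,0,0)^T$. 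Hence $c^+=(3,\min(\tau,1),2)^T$, whose unique nondegenerate optimum is $x^+=(1,0,1)^T$. Since $Mx^+-r=(1,0,0)^T$, we get $s^+=s^p$ although $x^+\neq x^p$. Some of the converse does survive. Write $c^+=M^Tx^p+s^p+\delta$ with $\delta\ge0$ supported on $\mathcal{I}^{x^p}_0$. Then $s^p=s^+$ gives $(c^+)^T(x^+-x^p)=(s^p)^T(x^+-x^p)+\sum_{i\notin\mathcal{I}^{x^p}_+}x^p_i(Mx^+-r)_i+\delta^Tx^+$. So the converse holds when the optimum for $c^+$ is unique and $(s^p)^Tx^+\ge(s^p)^Tx^p$.

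\emph{Second claim.} The LO multiplier $y$ for $c^+$ satisfies the QO stationarity equations on the support. On $\mathcal{I}^{x^p}_0$, however, you need $[\nabla f_q(x^p)-M^Ty]_i\ge0$ and you only know $[c^+-M^Ty]_i\ge0$. Here $c^+_i-(M^Tx^p)_i=\min(\tau,\max(0,c_i-(M^Tx^p)_i))$, whereas $\nabla f_q(x^p)_i-(M^Tx^p)_i=(Mx^p-r)_i$. So the safeguard can push $c^+_i$ above the gradient, and this is exactly where your ``no sign issue'' is wrong. The multiplier is forced by the support equations, so there is no freedom to fix it.

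For example, take $M=\begin{pmatrix}1&1\\0&1\end{pmatrix}$, $r=(1,0)^T$, $c=(3,1)^T$ and $\tau=10$ (any $\tau>2$ works). Then $x^p=(0,1)^T$ is the unique nondegenerate optimum, $s^p=(0,1)^T$ and $c^+=(3,2)^T$, and again $x^+=(0,1)^T$. But $f_q(t,1-t)=(1-t)^2$ decreases along the feasible edge. Indeed, the forced multiplier $\lambda=y=(2,0)^T$ gives $[\nabla f_q(x^p)-M^T\lambda]_1=-2$, while $[c^+-M^Ty]_1=1$.

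Your argument does prove stationarity under an extra condition: $\min(\tau,\max(0,c_i-(M^Tx^p)_i))\le(Mx^p-r)_i$ for all $i\in\mathcal{I}^{x^p}_0$. This would be automatic if the update set $c^+_i=\nabla f_q(x^p)_i$ on $\mathcal{I}^{x^p}_0$ as well, and it is the most the statement can claim.
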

 \begin{proof} The first conclusion of the theorem follows directly from the choice of $s^p$ and $s^+$.  To prove the second conclusion of the theorem, we observe that
 \[  c^+= c^{+1}+c^{+2},\]
 where \[  {c}^{+1}_i=\left\{ \begin{array}{cc}
                \hat{c}_i & \mbox{ if $ i\not\in \mathcal{I}^{x^p}_+$}; \\
                0 & \mbox{if $ i\in \mathcal{I}^{x^p}_+$}. \end{array}\right. \qquad  c^{+2}_i=\left\{ \begin{array}{cc}
                0 & \mbox{ if $i\not\in \mathcal{I}^{x^p}_+ $}; \\
                (M^T x^p)_i+s^p_i & \mbox{if $i\in \mathcal{I}^{x^p}_+ $}. \end{array}\right. \]
Similarly, we can also rewrite the objective function in \reff{mod:QO-LCP} as
\[  x^T(Mx-r)=\sum_{i\in \mathcal{I}^{x^p}_+} x_i(Mx-r)_i +   \sum_{i\not \in \mathcal{I}^{x^p}_+} x_i(Mx-r)_i.\]
From the choice of $x^p$ we obtain
\[ \sum_{i\not\in \mathcal{I}^{x^p}_+} x_i^p(Mx^p-r)_i=0,\]
i.e., $x^p$ is the global optimal solution of minimizing $\sum_{i\not \in \mathcal{I}^{x^p}_+} x_i(Mx-r)_i$.
Now recall that
\[ \frac{\partial{ [x_i(Mx-r)_i]}}{\partial x_i}=c_i^{+2}, \quad \forall i \in \mathcal{I}^{x^p}_+.\]
 Combining the above relationship with the fact that $x^+$ is the optimal solution to problem \reff{mod:PLO-LCP} with $c=c^+$, we conclude that $x^p=x^+$ is a stationary point of the QO model \reff{mod:QO-LCP}. \end{proof}

 Now we are ready to describe a slightly modified version of the basic P-ReCR algorithm.

\par\noindent\rule{\textwidth}{0.4pt}
\noindent{\bf \hspace*{1.5cm}  Primal ReCR for LCP}
\par\noindent\rule{\textwidth}{0.4pt}
\begin{itemize} \itemsep-2pt
	\item[]{\bf Input:} \( M, r, \theta, \tau \), stop criteria \( \epsilon > 0 \), maximum iterations \( K \);
	\item[]{\bf Step 0.} Initialization
	\begin{itemize}
		\item[] {Step 0.1.} Set \( k = 0 \), \( c^k = \textbf{1} \);
		\item[] {Step 0.2.} Solve problem (\ref{mod:PLO-LCP}) to obtain \( x^k \);
		\item[]{Step 0.3.} If problem (\ref{mod:PLO-LCP}) is infeasible
		\item[] \qquad  \qquad  \quad  The LCP is infeasible and stop;
		\item[] \qquad \qquad Else If \( f_\mathrm{q}(x^k)\le \epsilon \) or \( \|\phi(x^k)\| \le \epsilon\)  then
		\item[] \qquad  \qquad  \quad  Go to Step 2;
		\item[] \qquad \qquad End if
		\item[] {Step 0.4.} Update \( k = k + 1 \);
		\item[] {Step 0.5.} Set \( c^k=M^T x^{k-1} \);
		\item[] {Step 0.6.} Solve problem (\ref{mod:PLO-LCP}) with \( c=c^k \) for \( x^k \);
		\item[]{Step 0.7.} If \( f_\mathrm{q}(x^k)\le \epsilon \)  or \( \|\phi(x^k)\| \le \epsilon\) then
		\item[] \qquad  \qquad  \quad  Go to Step 2;
		\item[] \qquad \qquad End if
	\end{itemize}
	\item[]{\bf Step 1.} P-ReCR process
	\begin{itemize}\itemsep-2pt
		\item[]{ Step 1.1.} Update \( k = k + 1 \);
		\item[] {Step 1.2.}  Update \( c^k \) by (\ref{Scheme:P-ReCR});
		\item[]{ Step 1.3.} Solve problem (\ref{mod:PLO-LCP}) to obtain \( x^k \);
		\item[]{ Step 1.4.} If \( f_\mathrm{q}(x^k)\le \epsilon \) or  \( \|\phi(x^k)\| \le \epsilon\) then
		\item[] \qquad  \qquad  \quad  Go to Step 2;
		\item[] \qquad \qquad End if
		\item[]{ Step 1.5.} If \( \| x^k-x^{k-1}\| \le \epsilon \) then
		\item[]{ Step 1.6.}  \quad Go to Step 2;
		\item[]{ Step 1.7.} Else
		\item[]{ Step 1.8.}  \quad Go to Step 1.1;
		\item[]{ Step 1.9.} End If
	\end{itemize}
\item[]{\bf Step 2.} Final Output and Feasibility Check.
\end{itemize}
\par\noindent\rule{\textwidth}{0.4pt}
We remark that scheme \reff{Scheme:P-ReCR} is similar to the predictor-corrector method \cite{Mehrotra1992IPM}, a special variant of interior-point methods for linear and conic optimization. Like in the numerical implementation for predictor-corrector method,  we also   choose $\theta$ to be a small number $\theta= 0.1$ in the implementation of ReCR.

We next  investigate the convergence of the  P-ReCR with $\theta=1$. For this, we make the following assumption:
\begin{assumption}\label{Feas-PLO} The LO model \reff{mod:PLO-LCP} is strictly feasible.
\end{assumption}
We remark that the above assumption can be made w.l.o.g. To see this, we observe that
the issue of determining whether model \reff{mod:PLO-LCP} is strictly feasible can be addressed via solving the following LO
\begin{eqnarray*}
	\max &&\quad t \\
	\text{s.t.} &&\quad Mx-t\mathbf{1}\ge r, \quad x\ge 0, \quad t\ge 0.
\end{eqnarray*}
Moreover, when model \reff{mod:PLO-LCP} is  feasible but not strictly feasible, we can further solve several LOs to identify the index set $\mathcal{I}^1$ such that the following equalities are satisfied
\[ (Mx-r)_i=0, \quad \forall i\in \mathcal{I}^1.\]
Under such a circumstance, the underlying LCP \reff{mod:LCP} can be further simplified.

Now we study the convergence behavior of P-ReCR. For this, we consider an accumulation point of the sequence generated by ReCR. Since the optimal solution to problem \reff{mod:PLO-LCP} is invariant for scaling  the coefficient vector, thus we can further assume $c^k$ is bounded. Let $\bar{c}$ denote an accumulation point of the scaled sequence $c^k$ and $\bar{x}$ be the optimal solution to model \reff{mod:PLO-LCP} with $c=\bar{c}$. W.l.o.g., we assume
\[  \bar{c}=\lim_{k\infty} c^k.\]
Let $\bar{x}$ be the optimal solution to model \reff{mod:PLO-LCP} with $c=\bar{c}$. We say $\bar{x}$ an non-degenrate accumulation point if $\bar{x}$ is a non-degenerate solution of model \reff{mod:PLO-LCP}. We have
\begin{theorem}\label{mainthm1:P-ReCR} Suppose that Assumption~\ref{Feas-PLO} holds. Then  every non-degenerate accumulation point of the sequence generated from P-ReCR is    either a feasible solution to the original LCP or a stationary point of the QO model \reff{mod:QO-LCP} within $\epsilon$-tolerance. 
\end{theorem}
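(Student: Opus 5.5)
The argument reduces the accumulation point to a fixed point of the P-ReCR map and then applies Theorem~\ref{thm-2:P-ReCR}. Nondegeneracy supplies the needed continuity, since the map $c\mapsto x^c$ and the rectification vector $s^p$ are in general discontinuous.

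\emph{Setup.} By Assumption~\ref{Feas-PLO} and the scaling invariance of \reff{mod:PLO-LCP}, we may normalize so that $\{c^k\}$ is bounded, and pass to a subsequence with $c^k\to\bar c$. Let $\bar x$ be the optimal solution of \reff{mod:PLO-LCP} at $c=\bar c$, which is nondegenerate by hypothesis. Standard LO sensitivity then gives the first key fact. Since $\bar x$ is a nondegenerate optimal vertex (unique primal and dual optimal solutions, with strict complementarity), there is a neighborhood $U$ of $\bar c$ in which the optimal basis is unchanged. Hence $x^k=\bar x$ for all large $k$ in the subsequence. Consequently the index sets $\mathcal{I}^{x^k}_+$ and $\mathcal{I}^{x^k}_0$ stabilize to $\mathcal{I}^{\bar x}_+$ and $\mathcal{I}^{\bar x}_0$. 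It follows that $s^k=s^{\bar x}$ and $\hat c^k=\hat c(\bar x)$ are eventually constant.

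\emph{Passing to the update.} Next I pass to the limit in \reff{Scheme:P-ReCR} with $\theta=1$. Since $x^{k}=\bar x$ eventually, the update takes the form
\[
c^{k+1}=\min\bigl(\hat c(\bar x),\max(M^T\bar x,c^k)\bigr)+s^{\bar x}.
\]
The right-hand side is continuous in $c^k$, so $\bar c^{+}:=\min(\hat c(\bar x),\max(M^T\bar x,\bar c))+s^{\bar x}$ is the limit of $c^{k+1}$ along the subsequence.

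\emph{Case split.} There are two cases.
\begin{itemize}
\item If $s^{\bar x}=0$, Theorem~\ref{thm:P-Rec-Func} gives that $\bar x$ is a feasible solution to the LCP.
\item If $s^{\bar x}\neq 0$, I show that $\bar x$ remains optimal for \reff{mod:PLO-LCP} with $c=\bar c^+$, so that $x^+=\bar x$ and $s^+=s^{\bar x}$. Theorem~\ref{thm-2:P-ReCR} then gives that $\bar x$ is a stationary point of \reff{mod:QO-LCP}.
\end{itemize}
To prove the optimality claim in the second case, I argue as follows. The algorithm either stops with $\|x^{k+1}-x^k\|\le\epsilon$, or it keeps iterating. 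In the latter case the iterates $x^{k+1}$ along the subsequence must also converge to $\bar x$, because both $c^k$ and $c^{k+1}$ eventually lie in $U$. This is where the $\epsilon$-tolerance in the statement enters. Stationarity holds up to the error incurred because the stopping test $\|x^k-x^{k-1}\|\le\epsilon$ only certifies approximate invariance. That error translates, via Lipschitz dependence of the reduced costs on $c$ near a nondegenerate basis, into an $O(\epsilon)$ violation of the KKT conditions of \reff{mod:QO-LCP}.

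\emph{Main obstacle.} The hardest step is justifying that $c^{k+1}$ also lies in $U$, i.e.\ that the update does not jump out of the stability region of the basis. I would handle this by showing that the KKT conditions of \reff{mod:QO-LCP} hold at $\bar x$ up to $O(\epsilon)$, using the same decomposition $c^+=c^{+1}+c^{+2}$ as in the proof of Theorem~\ref{thm-2:P-ReCR}. On $\mathcal{I}^{\bar x}_+$, the update reproduces the partial derivatives $(M^T\bar x)_i+(M\bar x-r)_i$. On the remaining indices, the terms $x_i(Mx-r)_i$ vanish at $\bar x$ and are minimized there. The reduced costs of $\bar c^+$ at the basis of $\bar x$ are then those of the gradient of $f_\mathrm{q}$, up to the nonnegative perturbations from $\tau$ on $\mathcal{I}^{\bar x}_0$, which only reinforce the nonbasic sign conditions. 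If that sign argument does not go through cleanly, my fallback is to settle for the weaker conclusion that $\bar x$ is a fixed point within tolerance $\epsilon$ and invoke Theorem~\ref{thm-2:P-ReCR} directly.
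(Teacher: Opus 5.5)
Your skeleton is the paper's: nondegeneracy plus LO sensitivity gives $x^k=\bar x$ eventually, and Theorem~\ref{thm-2:P-ReCR} turns the fixed-point property into stationarity. However, the step you call the main obstacle is a genuine gap, and your proposed way around it does not work.

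The paper gets past this step only by declaring ``w.l.o.g.'' that the \emph{whole} sequence satisfies $c^k\to\bar c$. Then $c^k$ and $c^{k+1}$ both lie in the stability neighborhood $U$ for large $k$, so $x^k=x^{k+1}=\bar x$. Theorem~\ref{thm-2:P-ReCR} then applies verbatim, and no analysis of $\bar c^+$ is needed. Along a genuine subsequence $c^{k_j}\to\bar c$, as in your setup, knowing $x^{k_j}=\bar x$ tells you nothing about $x^{k_j+1}$.

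Your fix is circular. At a nondegenerate vertex, the KKT multipliers of \reff{mod:QO-LCP} are exactly the reduced costs of $\nabla f_{\mathrm q}(\bar x)=(M+M^T)\bar x-r$ at the basis of $\bar x$. So ``reduced costs of $\bar c^+$ equal those of the gradient plus nonnegative terms'' yields optimality of $\bar x$ at $\bar c^+$ only if stationarity is already known. The decomposition from Theorem~\ref{thm-2:P-ReCR} cannot supply stationarity, because that proof presupposes $x^p=x^+$.

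The sign claim is also false. For $i\in\mathcal I^{\bar x}_0$ you have $s^{\bar x}_i=0$ and $\bar c^+_i\in[(M^T\bar x)_i,(M^T\bar x)_i+\tau]$, whereas $\nabla f_{\mathrm q}(\bar x)_i=(M^T\bar x)_i+(M\bar x-r)_i$. The difference is therefore negative whenever, e.g., $\bar c_i\le(M^T\bar x)_i$ and $(M\bar x-r)_i>0$.

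Your argument also does not exclude the iterates alternating between $\bar x$ and another vertex. In that case $\bar x$ can be a nondegenerate accumulation point without being a fixed point, and Theorem~\ref{thm-2:P-ReCR} gives nothing. The $\epsilon$-fallback does not help: along such a cycle $\|x^k-x^{k-1}\|$ stays bounded away from zero, so the stopping test never fires. You therefore need either the paper's full-sequence convergence assumption, under which your obstacle simply disappears, or a separate argument ruling out cycling.

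As a side remark, rescaling $c^k$ is not innocuous, because \reff{Scheme:P-ReCR} is not scale-invariant. It is also unnecessary: $M^Tx^k+\theta s^k\le c^{k+1}\le M^Tx^k+\theta s^k+\tau\mathbf{1}$, so $\{c^k\}$ is bounded whenever $\{x^k\}$ is (e.g.\ when the LO solver returns vertices).
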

\begin{proof}
Since $\bar{x}$ is a non-degenerate solution of model \reff{mod:PLO-LCP} with $c=\bar{c}$, from the sensitivity analysis in \cite{Holder1997LO-SA} it follows immediately
\[ \bar{x} =\lim_{k\infty} x^k.\]
Combining the above relationship with Theorem~\ref{thm-2:P-ReCR}, we obtain the conclusion in the theorem and thus
 the proof is completed.  \end{proof}

We remark that the assumption that $\bar{x}$ being a non-degenerate solution of model \reff{mod:PLO-LCP} is rather mild. This is because in P-ReCR, we rely mainly on the safe-guide direction, which increase the coefficient values associated with all the nonbasic variables $\{ x_i:i\in \mathcal{I}_0\}$ so that all the nonbasic variables have strictly positive reduced costs.

We next discuss how to obtain a certificate for the infeasibility of the original LCP when the solution provided by P-ReCR is not a feasible solution to the original LCP. For this, let define
\[
\bar{\mathcal{I}}_+=\{i\in \mathcal{I}: \bar{x}_i>0,\ (M\bar{x}-r)_i>0\}; \quad \bar{\mathcal{I}}_0=\mathcal{I} \setminus \bar{\mathcal{I}}_+.
\]
Let \(\bar{\mathcal{I}}_+=\{i_1,\dots,i_s\}\), where \(s=|\bar{\mathcal{I}}_+|\). Observe that the complementarity condition
\[
x_i(Mx-r)_i=0
\]
is equivalent to the logical disjunction
\[
x_i=0
\qquad \text{or} \qquad
(Mx-r)_i=0.
\]
Therefore, every feasible solution to the original LCP must satisfy one complementarity branch for every index \(i\in\bar{\mathcal{I}}_+\).

For every binary vector
\[
\delta=(\delta_1,\dots,\delta_s)\in\{0,1\}^s,
\]
we define the following LP feasibility problem associated with one complete complementarity pattern:
\begin{subequations} \label{mod:Infeas-LCP}
 \begin{eqnarray}
 \quad \min &&\quad 0 \\
 \text{s.t.} &&\quad Mx \ge r,\quad x\ge 0, \nonumber\\
 &&\quad x_{i_\ell}=0 \quad \text{if } \delta_\ell=0,\qquad \ell=1,\dots,s,\nonumber\\
 &&\quad (Mx-r)_{i_\ell}=0 \quad \text{if } \delta_\ell=1,\qquad \ell=1,\dots,s. \nonumber
 \end{eqnarray}
\end{subequations}

Since every feasible LCP solution must satisfy exactly one admissible complementarity branch at each violated index, the original LCP is feasible if and only if at least one problem \reff{mod:Infeas-LCP} is feasible. This immediately yields the following result.

\begin{proposition}\label{prop:Infeas}
If the original LCP is feasible, then   there exists a binary vector
$\delta\in\{0,1\}^{|\bar{\mathcal{I}}_+|}$
such that the associate branch problem \reff{mod:Infeas-LCP} is feasible.  If all the problems in \reff{mod:Infeas-LCP}  are infeasible, then the original LCP is infeasible.
\end{proposition}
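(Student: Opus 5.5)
The plan is a direct covering argument: the finitely many branch problems \reff{mod:Infeas-LCP} together cover the LCP solution set $\mathcal{X}_\mathrm{LCP}$. The second assertion is then just the contrapositive of the first, so it suffices to prove the first.

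Assume $\mathcal{X}_\mathrm{LCP}\neq\emptyset$ and fix any $x^*\in\mathcal{X}_\mathrm{LCP}$. Then $Mx^*\ge r$ and $x^*\ge 0$, so every summand of $(x^*)^T(Mx^*-r)=\sum_i x^*_i(Mx^*-r)_i$ is nonnegative. Since the sum is zero, each summand vanishes, that is,
\[
x^*_i(Mx^*-r)_i=0 \quad \text{for every } i\in\mathcal{I}.
\]
In particular, for each $\ell=1,\dots,s$ at least one of $x^*_{i_\ell}=0$ or $(Mx^*-r)_{i_\ell}=0$ holds.

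Next, define $\delta\in\{0,1\}^s$ by setting $\delta_\ell=0$ if $x^*_{i_\ell}=0$, and $\delta_\ell=1$ otherwise. In the second case $x^*_{i_\ell}>0$, so the componentwise complementarity above forces $(Mx^*-r)_{i_\ell}=0$. With this choice, $x^*$ satisfies $Mx\ge r$, $x\ge 0$, and every branch equality prescribed by $\delta$. Hence the branch problem \reff{mod:Infeas-LCP} associated with $\delta$ is feasible. This proves the first claim.

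For the second claim, suppose all $2^s$ branch problems are infeasible. If the LCP had a solution, the first claim would produce a feasible branch, a contradiction. Therefore the LCP is infeasible.

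There is no real obstacle here; the argument uses neither $\bar{x}$ nor the specific choice of $\bar{\mathcal{I}}_+$, since the index set only determines which disjunctions are branched on. The one point that needs care is that the aggregate condition $x^T(Mx-r)=0$ implies componentwise complementarity. This step relies on the nonnegativity of both $x$ and $Mx-r$, and that is exactly what makes the binary covering by branches valid.
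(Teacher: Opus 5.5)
Your proof is correct and follows the paper's argument essentially verbatim: you fix $x^*\in\mathcal{X}_\mathrm{LCP}$, read off a branch pattern $\delta$ from the complementarity equality that holds at each index of $\bar{\mathcal{I}}_+$, and obtain the second claim as the contrapositive. Your tie-breaking rule ($\delta_\ell=0$ whenever $x^*_{i_\ell}=0$) and your derivation of componentwise complementarity from $x^T(Mx-r)=0$ slightly tighten the paper's version, whose definition of $\delta$ is ambiguous when both equalities hold.
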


\begin{proof}
	Suppose first that the original LCP is feasible. Then there exists a point \(x^*\) such that
	\[
	Mx^*\ge r,\qquad x^*\ge 0,
	\]
	and
	\[
	x_i^*(Mx^*-r)_i=0,\qquad i\in\mathcal I.
	\]
	Since \(\bar{\mathcal I}_+=\{i\in\mathcal I:\bar x_i>0,\ (M\bar x-r)_i>0\}\), for every \(i\in\bar{\mathcal I}_+\) the complementarity condition implies that at least one of the following two equalities holds:
	\[
	x_i^*=0
	\qquad \text{or} \qquad
	(Mx^*-r)_i=0.
	\]
	Define a binary vector \(\delta=(\delta_i)_{i\in\bar{\mathcal I}_+}\in\{0,1\}^{|\bar{\mathcal I}_+|}\) by
	\[
	\delta_i=
	\begin{cases}
		0, & \text{if } x_i^*=0,\\
		1, & \text{if } (Mx^*-r)_i=0.
	\end{cases}
	\]
	Then \(x^*\) satisfies the constraints of the corresponding branch problem \(\reff{mod:Infeas-LCP}\). Hence, the branch problem associated with \(\delta\) is feasible.
	The second conclusion follows directly from the first conclusion in the proposition. \end{proof}

We remark that Proposition~\ref{prop:Infeas} provides a theoretical guarantee for infeasibility detection. In the worst case, this procedure may require solving up to \(2^{|\bar{\mathcal I}_+|}\) LP feasibility problems. For computational consideration, we apply the recursive disjunctive search only after a relatively small active set \(\bar{\mathcal I}_+\) has been identified by P-ReCR.

Now we are ready to describe the detailed Step 2 in ReCR. 
\par\noindent\rule{\textwidth}{0.4pt}
	\noindent{\bf \hspace*{1.5cm} Step 2. Final Output and Feasibility Check}
\par\noindent\rule{\textwidth}{0.4pt}
\begin{itemize}\itemsep-2pt
        \item[]{ Step 2.1}  If \(f_\mathrm{q}(x^*) \le  \epsilon\) or \( \|\phi(x^*)\| \le \epsilon\) then
          \item[]{Step 2.2}     \quad             Output \(x^* := x^k \) as the final solution;
        \item[]{Step 2.3} Else
         \item[]{}\qquad  \qquad \quad For each \(\delta\in\{0,1\}^{|\bar{\mathcal I}_+|}\)
          \item[]{} \qquad \qquad  \qquad Solve problem \reff{mod:Infeas-LCP} associated with the branch pattern \(\delta\);
           \item[]{} \qquad \qquad \quad If problem \reff{mod:Infeas-LCP}  is feasible then
           \item[]{} \qquad \qquad \qquad Output the current  solution as the final solution;
           \item[]{} \qquad \qquad \quad End if
           \item[]{} \qquad \qquad \quad End For
           \item[]{} \qquad \qquad \quad If all the branch problems \reff{mod:Infeas-LCP} are infeasible then
           \item[]{} \qquad \qquad \qquad The original LCP is infeasible;
           \item[]{} \qquad \qquad \quad End if
        \item[]{Step 2.4}   End If
    \end{itemize}
\par\noindent\rule{\textwidth}{0.4pt}

We finish this subsection by showing how P-ReCR works for an instance of infeasible LCP.
Consider the following LCP instance:
\begin{example}\label{Inf-LCP1}
\[
M=
\begin{pmatrix}
1 & 1 & 1 \\
1 & -2 & 1 \\
1 & 1 & -1
\end{pmatrix}
,\quad
r=
\begin{pmatrix}
-4 \\[4pt]
1 \\[4pt]
0
\end{pmatrix}.
\]
\end{example}

P-ReCR finds a solution \(x^* = (0.5,\,0,\,0.5)^\top\) to model \reff{mod:PLO-LCP} in 4 iterations. Since \(f_q(x^*)=2.5>0\), we need to check whether the underlying LCP is infeasible. Note that we have
\[
\bar{\mathcal I}_+=\{1\}.
\]
Hence, there exist two complementarity branch systems corresponding to
\[
x_1=0
\]
and
\[
(Mx-r)_1=0.
\]
By solving the two branch problems \reff{mod:Infeas-LCP}, we verify that both systems are infeasible. Therefore, Proposition~\ref{prop:Infeas} implies that the original LCP is infeasible. This demonstrates that P-ReCR has successfully detected the infeasibility of the underlying LCP.

\subsection{Rectified convex quadratic relaxation (ReQR)}
 We next introduce a new dynamic variant of ReCR which treats $c$ as a vector of parameters that may change in model \reff{mod:PDLO}.
For this, we recall that from the proof of Theorem ~\ref{thm:URT}, we can conclude that if the original LCP is feasible and let $x^*$ be a feasible solution to the underlying
LCP, then $c^*=M^Tx^*$.
Motivated by this observation, we propose to replace the constraint $M^Ty\le c$ in \reff{mod:PDLO} by $M^Ty\le M^Tx$, leading to  the following LO:
\begin{subequations} \label{mod:PDLO-1}
 \begin{eqnarray}  \quad \min &&\quad  c^Tx  \\
    \text{s.t.} &&\quad    Mx \ge r,  \quad  M^Ty \le M^Tx \\
        &&\quad           x\le  y, \quad x,y\ge 0.
  \end{eqnarray}
\end{subequations}

We now discuss how to solve model \reff{mod:PDLO-1}. First, we observe that  we can rewrite the objective function in  model \reff{mod:PDLO-1} as $(c-r)^Tx+r^T(x-y)$. Therefore, we can solve model \reff{mod:PDLO-1} via decomposing it into the following two smaller models:
\begin{subequations} \label{mod:PLO-1}
 \begin{eqnarray}  \min &&\quad  (c-r)^Tx \\
    \text{s.t.} &&\quad Mx \ge r, \quad  x\ge 0.
\end{eqnarray}
\end{subequations}
and
\begin{subequations} \label{mod:DLO-1}
 \begin{eqnarray}  \min &&\quad   r^T(x-y) \\
    \text{s.t.} &&\quad    M^T(x-y) \ge 0,  \quad  y-x\ge 0.
  \end{eqnarray}
\end{subequations}
Note that model \reff{mod:DLO-1} needs to  be solved only once at the very beginning. Particularly, we have the following proposition.
\begin{proposition} If problem \reff{mod:DLO-1} is bounded, then it must have a trivial optimal solution satisfying $x=y$. If problem \reff{mod:DLO-1} is unbounded,
then the original LCP \reff{mod:LCP} is infeasible.
\end{proposition}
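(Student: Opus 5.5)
The plan is to reduce problem \reff{mod:DLO-1} to a homogeneous linear program in the single difference variable and then use the fact that a linear objective over a cone is either bounded below by $0$ or unbounded.

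\textbf{Step 1: change of variables.} The constraints and the objective of \reff{mod:DLO-1} depend on $(x,y)$ only through $w=y-x$. In terms of $w$, the problem becomes
\[
\min\ -r^Tw \quad \text{s.t.}\quad M^Tw\le 0,\quad w\ge 0 .
\]
Any feasible $(x,y)$ gives a feasible $w$. Conversely, any feasible $w$ gives a feasible pair, for instance $(x,y)=(x,x+w)$ for an arbitrary $x$. The objective values agree, so the two problems have the same optimal value and the same boundedness status.

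\textbf{Step 2: the cone dichotomy.} The feasible set $K=\{w\ge 0: M^Tw\le 0\}$ is a polyhedral cone containing $w=0$, and the objective is linear and homogeneous.

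If some $w\in K$ has $-r^Tw<0$, then $tw\in K$ for every $t>0$, and the objective $-t\,r^Tw$ tends to $-\infty$. So the problem is unbounded.

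Otherwise $-r^Tw\ge 0$ on all of $K$. Then the optimal value is $0$, and it is attained at $w=0$, which is exactly the trivial solution $x=y$. This proves the first claim.

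\textbf{Step 3: the Farkas-type argument.} Suppose the problem is unbounded. By Step 2 there is a $w$ with
\[
w\ge 0,\qquad M^Tw\le 0,\qquad r^Tw>0 .
\]
I will show this $w$ certifies that the relaxation \reff{mod:PLO-LCP} is infeasible. Suppose some $x$ satisfies $Mx\ge r$ and $x\ge 0$. Since $w\ge 0$, multiplying $Mx\ge r$ by $w^T$ gives $w^TMx\ge r^Tw>0$. On the other hand, $x\ge 0$ and $M^Tw\le 0$ give $w^TMx=(M^Tw)^Tx\le 0$. These two inequalities contradict each other.

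Hence $\{x\ge 0: Mx\ge r\}$ is empty. Every point of $\mathcal{X}_\mathrm{LCP}$ would lie in this set, so the LCP \reff{mod:LCP} is infeasible. Equivalently, this is the Farkas alternative for the system $Mx\ge r$, $x\ge 0$.

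\textbf{Main obstacle.} There is no deep step here. The only care needed is the reduction in Step 1, namely checking that treating $x$ and $y$ as otherwise free variables does not change boundedness; after that the result is elementary LP duality. If nonnegativity of $x$ and $y$ were intended as part of \reff{mod:DLO-1}, the same argument applies with $x=0$, $y=w$.
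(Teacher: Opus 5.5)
Your proof is correct and complete. The paper states this proposition without any proof, so there is no argument of its own to compare against. Your reduction via $w=y-x$ to a homogeneous LP over the cone $\{w\ge 0: M^Tw\le 0\}$, followed by the Farkas-type certificate, is the natural argument, and it covers both readings of \reff{mod:DLO-1}, with or without $x,y\ge 0$.

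Your Step~3 actually proves more than the proposition states. Unboundedness of \reff{mod:DLO-1} certifies infeasibility of the relaxation \reff{mod:PLO-LCP} itself, and by Farkas' lemma the converse also holds. So this test is equivalent to the feasibility check of \reff{mod:PLO-LCP} that the algorithms already perform in Step~0.
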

 Let $x^c$ be the optimal solution to problem \reff{mod:PLO-1}.  we propose to rectify the vector $c$ as below:
\begin{eqnarray} \label{P-ReCR1} c^+=M^T x^c.\end{eqnarray}
One can easily verify that if the above scheme is adopted, then the dynamic ReCR can be viewed as a specific method for solving problem \reff{mod:QO-LCP}.
We can rewrite the objective function in \reff{mod:QO-LCP} as the following:
 \[ x^TM x -r^T x=\frac{x^T(M+M^T)x}{2}-r^Tx.\]
 Since for generic LCPs, the matrix $M+M^T$ may be indefinite. Under such a circumstance,  problem \reff{mod:QO-LCP} reduces to non-convex quadratic optimization with linear constraints.
 One popular way to solve non-convex QO is the so-called DC program \cite{Horst1999DC}. Particularly, Thi and Dinh \cite{Thi2011DC-LCP} developed various DC approaches for LCPs.
  Following the ideas in \cite{Thi2011DC-LCP}, we can rewrite the objective function in \eqref{mod:QO-LCP}  as  the following:
  \[ \frac{x^T(M+M^T)x}{2}-r^Tx= \frac{x^TM^+x}{2}-\frac{x^TM^-x}{2}-r^Tx,\]
  where $M^+, M^-$ are two symmetric positive semi-definite matrices derived from the singular value decomposition  of the matrix $M+M^T$ satisfying the following property
  \[ M^+\succeq0, M^-\succeq 0, \  M^+ M^-=0.\]
 Suppose that $x^k$ is available at iterate $k$, we then solve the following convex QO
 \begin{subequations} \label{mod:DC-1}
 \begin{eqnarray}  \min &&\quad  \frac{x^TM^+ x}{2}  - (x^k)^TM^-x- r^Tx \\
    \text{s.t.} &&\quad    Mx \ge r,  \quad          x\ge 0.
  \end{eqnarray}
\end{subequations}
Correspondingly, we call it   Rectified  Convex Quadratic Relaxation (ReQR).

Now we are ready to describe the ReQR for  LCP.

\par\noindent\rule{\textwidth}{0.4pt}
\noindent{\bf\hspace*{1.5cm}   Rectified  Quadratic  Relaxation for LCP}
\par\noindent\rule{\textwidth}{0.4pt}
\begin{itemize} \itemsep-2pt
	\item[]{\bf Input:} \( M, r \), stop criteria \( \epsilon > 0 \), maximum iterations \( K \);
	\item[]{\bf Step 0.} Initialization
	\begin{itemize}\itemsep-1pt
		\item[]{Step 0.1.} Compute \(M^+\) and \(M^-\) using singular value decomposition;
		\item[]{Step 0.2.} Set \( k = 0 \), \( c^k = \textbf{1} \);
		\item[]{Step 0.3.} Solve problem (\ref{mod:PLO-LCP}) to obtain \( x^k \);
		\item[]{Step 0.4.} If problem (\ref{mod:PLO-LCP}) is infeasible
		\item[] \qquad \qquad \quad The LCP is infeasible and stop;
		\item[] \qquad \qquad Else If \( f_\mathrm{q}(x^k)\le \epsilon \) or \( \|\phi(x^k)\| \le \epsilon\) then
		\item[] \qquad \qquad \quad Go to Step 2;
		\item[] \qquad \qquad End if
		\item[]{Step 0.5.} Update \( k = k + 1 \);
		\item[]{Step 0.6.} Solve problem (\ref{mod:DC-1}) to obtain its optimal solution \( x^k \);
		\item[]{Step 0.7.} If \( f_\mathrm{q}(x^k)\le \epsilon \) or \( \|\phi(x^k)\| \le \epsilon\) then
		\item[] \qquad \qquad \quad Go to Step 2;
		\item[] \qquad \qquad End if
	\end{itemize}
	\item[]{\bf Step 1.} Solving process
	\begin{itemize}\itemsep-1pt
		\item[]{Step 1.1.} Update \( k = k + 1 \);
		\item[]{Step 1.2.} Solve problem (\ref{mod:DC-1}) to obtain \( x^k \);
		\item[]{Step 1.3.} If \( f_\mathrm{q}(x^k)\le \epsilon \) or \( \|x^{k}-x^{k-1}\|\le \epsilon \) then
		\item[] \qquad \qquad \quad Go to Step 2;
		\item[] \qquad \qquad Else
		\item[] \qquad \qquad \quad Go to Step 1.1;
		\item[]{Step 1.4.} End if
	\end{itemize}
\item[]{\bf Step 2.} Final Output and Feasibility Check.
\end{itemize}
\par\noindent\rule{\textwidth}{0.2pt}

The following result follows from the convergence of DC program in \cite{Horst1999DC, Thi2011DC-LCP} and thus  its proof is omitted.
\begin{theorem}\label{thm:Q-ReCR}
 Suppose that  the QO subproblem at every iteration of ReQR is solved exactly and $\epsilon=0$ and Assumption~1 holds.  Then   the sequence $\{x^k\}$ generated by the  ReQR is  bounded and converges to a stationary point of  problem \reff{mod:QO-LCP}.
\end{theorem}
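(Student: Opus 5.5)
We follow the standard DCA convergence argument (Pham Dinh–Le Thi) for the splitting used in \reff{mod:DC-1}, specialized to the polyhedral feasible set $\mathcal{F}=\{x: Mx\ge r,\ x\ge 0\}$. Write $f_\mathrm{q}(x)=g(x)-h(x)$ with
\[ g(x)=\tfrac12 x^TM^+x-r^Tx+\iota_{\mathcal{F}}(x), \qquad h(x)=\tfrac12 x^TM^-x, \]
both convex. Each ReQR step minimizes $g(x)-\nabla h(x^k)^Tx$, i.e.\ it minimizes the convex majorant $g(x)-h(x^k)-\nabla h(x^k)^T(x-x^k)$ of $f_\mathrm{q}$. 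By Assumption~1, $\mathcal{F}$ is nonempty, so each subproblem is feasible.

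\textbf{Well-posedness and descent.} First we check that each subproblem attains its minimum. The majorant lies above $f_\mathrm{q}$, and on $\mathcal{F}$ we have $f_\mathrm{q}(x)=x^T(Mx-r)\ge 0$. So the subproblem objective is bounded below on $\mathcal{F}$, and the Frank--Wolfe theorem for quadratic programs over polyhedra gives existence of a minimizer $x^{k+1}$. Next, convexity of $h$ gives
\[ f_\mathrm{q}(x^{k+1})\le g(x^{k+1})-h(x^k)-\nabla h(x^k)^T(x^{k+1}-x^k)\le g(x^k)-h(x^k)=f_\mathrm{q}(x^k). \]
Using the strong convexity of the subproblem on the range of $M^+$, and the curvature of $h$ on the range of $M^-$, we sharpen this to
\[ f_\mathrm{q}(x^k)-f_\mathrm{q}(x^{k+1})\ge \tfrac12 (x^{k+1}-x^k)^T(M^++M^-)(x^{k+1}-x^k). \]
Since $f_\mathrm{q}\ge 0$ on $\mathcal{F}$, the values $f_\mathrm{q}(x^k)$ converge, and $\sum_k \|x^{k+1}-x^k\|^2_{M^++M^-}<\infty$. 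If $M+M^T$ is singular, this controls only part of the step. We then add a proximal term $\frac{\rho}{2}\|x\|^2$ to both $g$ and $h$, as in \cite{Thi2011DC-LCP}; the subproblem changes only by a strongly convex proximal term, and we obtain $\|x^{k+1}-x^k\|\to 0$.

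\textbf{Boundedness (main obstacle).} Boundedness of $\{x^k\}$ is the step we expect to be hardest, since $\mathcal{F}$ may be unbounded and the level set $\{x\in\mathcal{F}: f_\mathrm{q}(x)\le f_\mathrm{q}(x^0)\}$ need not be bounded for general $M$. Our plan is to argue by contradiction with a recession-direction analysis. Suppose $\|x^k\|\to\infty$ and let $d$ be a limit point of $x^k/\|x^k\|$. Then $d\ge 0$ and $Md\ge 0$. Dividing the descent inequality by $\|x^k\|^2$ gives $d^TMd=0$. Combining the first-order optimality conditions of \reff{mod:DC-1} with Assumption~1 (strict feasibility provides a Slater point, which keeps the multipliers bounded after scaling), we want to show that along such $d$ the linear part $-r^Td$ forces the subproblem objective to increase. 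That would contradict the fact that the iterates minimize it.

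If this fails for a general $M$, the fallback is to obtain boundedness from the cited DCA results \cite{Horst1999DC,Thi2011DC-LCP}. For polyhedral DC programs whose objective is bounded below, these results give that the DCA sequence is bounded as soon as the subproblem solution sets are uniformly bounded. We would verify this uniform bound via Assumption~1.

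\textbf{Stationarity of limits.} Given boundedness, take an accumulation point $\bar{x}$ of $\{x^k\}$. Since $\|x^{k+1}-x^k\|\to 0$, the KKT conditions of \reff{mod:DC-1} at $x^{k+1}$ pass to the limit, with bounded multipliers by Assumption~1. The limiting conditions read
\[ 0\in M^+\bar{x}-M^-\bar{x}-r+N_{\mathcal{F}}(\bar{x}), \]
which are exactly the KKT conditions of \reff{mod:QO-LCP}, so $\bar{x}$ is a stationary point. Full-sequence convergence then follows from the Łojasiewicz property of quadratic functions on polyhedra, or from finiteness of the stationary values of a QP on a polyhedron together with $\|x^{k+1}-x^k\|\to 0$.
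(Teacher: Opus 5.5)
Your route is the one the paper intends: its proof is only a pointer to DCA convergence theory for the splitting $f_\mathrm{q}=g-h$. The steps you actually carry out are sound. Each subproblem \reff{mod:DC-1} attains its minimum by Frank--Wolfe, because its objective majorizes $f_\mathrm{q}\ge 0$ on $\mathcal F$ up to a constant. The descent estimate $f_\mathrm{q}(x^k)-f_\mathrm{q}(x^{k+1})\ge \frac12\|x^{k+1}-x^k\|^2_{M^++M^-}$ is also correct.

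\textbf{The gap: boundedness.} You rightly call boundedness of $\{x^k\}$ the main obstacle, but you do not establish it. The recession argument stops at ``we want to show'', and the fallback is circular.
\begin{itemize}
\item The subproblem at step $k$ depends on $x^k$ only through $M^-x^k$.
\item Assumption~1 does bound each \emph{individual} solution set. An unbounded direction $d$ would have to satisfy $d\ge 0$, $Md\ge 0$, $M^+d=M^-d=0$ and $r^Td=0$. For a strictly feasible $\hat x$ this gives $0<d^T(M\hat x-r)=-(Md)^T\hat x\le 0$, a contradiction.
\item A bound that is uniform in $k$, however, amounts to boundedness of $\{M^-x^k\}$, which is what you are trying to prove.
\end{itemize}
Level sets do not help either: Assumption~1 bounds neither $\mathcal F$ nor $\{x\in\mathcal F: f_\mathrm{q}(x)\le c\}$. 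Take $M=\begin{pmatrix}0&1\\0&0\end{pmatrix}$ and $r=(1,-1)^T$. The point $(1,2)^T$ is strictly feasible, yet $f_\mathrm{q}(x)=x_1(x_2-1)+x_2$ attains its minimum value $1$ on $\mathcal F$ at every $(t,1)^T$, $t\ge 0$. Citing the DCA literature also does not close this step, since the general DCA convergence theorems assume the DCA sequence is bounded rather than prove it. You need an argument that exploits the structure $f_\mathrm{q}=x^T(Mx-r)$ together with Assumption~1, or an extra hypothesis such as bounded level sets of $f_\mathrm{q}$ on $\mathcal F$.

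\textbf{Two further steps do not work as written.}

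First, adding $\frac{\rho}{2}\|x\|^2$ to both $g$ and $h$ changes the subproblem and hence the iterates. What you then prove concerns proximal DCA, not ReQR with \reff{mod:DC-1}. For subsequential stationarity you do not need it. Summability gives $M^-(x^{k+1}-x^k)\to 0$. The KKT system of \reff{mod:DC-1} at $x^{k+1}$ involves $x^k$ only through $M^-x^k=M^-x^{k+1}-M^-(x^{k+1}-x^k)$. So you can pass to the limit along $x^{k_j+1}$ using closedness of the graph of $N_{\mathcal F}$.

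Second, for convergence of the whole sequence, ``finitely many stationary values plus $\|x^{k+1}-x^k\|\to 0$'' only shows that the limit set is a connected set of stationary points on one level. Such sets can be continua, for example the ray in the example above. The Kurdyka--{\L}ojasiewicz route is the right tool, but it needs a sufficient-decrease inequality in a genuine norm. You have that only when $M+M^T$ is nonsingular; otherwise your estimates give no control of the component of $x^{k+1}-x^k$ in $\ker(M+M^T)$.
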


We finish this subsection by introducing the hybrid ReCR (H-ReCR). First we mention that though   P-ReCR is globally convergent, we have observed in numeric experiments that while P-ReCR can find a feasible solution to all the LCP instances  collected from the literature and most synthetic instances generated in this work, it struggled to either find a feasible solution or obtain a certificate for the infeasibility for some SLCC instances with non-convex solution set. One possible reason for this phenomena
  is that P-ReCR is essentially a first-order method. As  pointed out in several studies \cite{Beck2017FOM,Boyd2011ADM},  even for convex optimization problems, first-order methods may have a hard time to obtain a solution with high accuracy. However, to determine whether a given SLCC is feasible, we need to obtain a highly accurate solution to the constructed bilinear model.
    To  help obtain a highly accurate solution to the underlying bilinear model,    we propose to switch to the high-order ReQR method  if one of the following two thresholds \begin{eqnarray}\label{Switch-Rates} \rho^k_{1}=\frac{f_q(x^k)\!+\!\|\mathcal{I}_+^k\|}{f_q(x^0)\!+\!\|\mathcal{I}_+^0\|}\le 0.1\%, \   \rho^k_{2}=\frac{f_q(x^{k-1})\!+\!\|\mathcal{I}_+^{k-1}\|-f_q(x^k)\!-\!\|\mathcal{I}_+^k\|}{f_q(x^{k-1})\!+\!\|\mathcal{I}_+^{k-1}\|} \le 0.1\% \nonumber\end{eqnarray}
is met, where the first threshold is defined by the absolute improvement rate and the second threshold is defined by the relative improvement rate at iteration $k$.
In other words, whenever a good approximate but not highly accurate solution is generated from P-ReCR, we switch to ReQR.

Now we are ready to describe the H-ReCR for LCPs as follows.

\par\noindent\rule{\textwidth}{0.4pt}
\noindent{\bf\hspace*{1.5cm}   Hybrid ReCR for LCP }
\par\noindent\rule{\textwidth}{0.2pt}
\begin{itemize} \itemsep-2pt
	\item[]{\bf Input:} \( M, r, \theta, \tau \), stop criteria \( \epsilon > 0 \), maximum iterations \( K \);
	\item[]{\bf Step 0.} Initialization
	\begin{itemize}\itemsep-2pt
		\item[]{Step 0.1.} Compute \(M^+\) and \(M^-\) using singular value decomposition;
		\item[]{Step 0.2.} Set \( k = 0 \), \( c^k = \textbf{1} \);
		\item[]{Step 0.3.} Solve problem (\ref{mod:PLO-LCP}) to obtain \( x^k \);
		\item[]{Step 0.4.} If problem (\ref{mod:PLO-LCP}) is infeasible
		\item[] \qquad \qquad \quad The LCP is infeasible and stop;
			\item[] \qquad\qquad    Else If \( f_q(x^k)\le \epsilon \) or \( \|\phi(x^k)\| \le \epsilon\) then
		\item[] \qquad \qquad \quad Go to Step 3;
		\item[] \qquad \qquad End if
		\item[]{Step 0.6.} Update \( k = k + 1 \), \( c^k=M^T x^{k-1} \);
		\item[]{Step 0.7.} Solve problem (\ref{mod:PLO-LCP}) with \( c=c^k \) for \( x^k \);
		\item[]{Step 0.8.} If \( f_\mathrm{q}(x^k)\le \epsilon \) or \( \|\phi(x^k)\| \le \epsilon\)  then
		\item[] \qquad \qquad \quad Go to Step 3;
			\item[]\qquad \qquad Else If \( \rho^k_{1}< 0.01 \) or $\rho^k_{2}\le 0.01$ then
		\item[] \qquad \qquad \quad Go to Step 2;
		\item[] \qquad \qquad End if
	\end{itemize}
	\item[]{\bf Step 1.} P-ReCR process
	\begin{itemize}\itemsep-2pt
		\item[]{Step 1.1.} Update \( k = k + 1 \), update \( c^k \) by (\ref{Scheme:P-ReCR});
		\item[]{Step 1.2.} Solve problem (\ref{mod:PLO-LCP}) to obtain \( x^k \);
		\item[]{Step 1.3.} If \( f_\mathrm{q}(x^k)\le \epsilon \) or \( \|\phi(x^k)\| \le \epsilon\) then
		\item[] \qquad \qquad \quad Go to Step 3;
		\item[] \qquad \qquad End if
		\item[]{Step 1.4.} If \( \rho^k_{abs}< 0.01 \) or $\rho^k_{rel}\le 0.01$ then
		\item[] \qquad \qquad \quad Go to Step 2;
		\item[]{Step 1.5.} Else if \( \|x^{k}-x^{k-1}\| \le \epsilon \)  then
		\item[] \qquad \qquad \quad Go to Step 3;
		\item[]{Step 1.6.} Else
		\item[] \qquad \qquad \quad Go to Step 1.1;
		\item[]{Step 1.7.} End if
	\end{itemize}
	\item[]{\bf Step 2.} ReQR process
	\begin{itemize}\itemsep-2pt
		\item[]{Step 2.1.} Update \( k = k + 1 \);
		\item[]{Step 2.2.} Solve problem (\ref{mod:DC-1}) using $x^{k-1}$ to obtain \( x^k \);
		\item[]{Step 2.3.} If \( f_\mathrm{q}(x^k)\le \epsilon \) or \( \|\phi(x^k)\| \le \epsilon\) then
		\item[] \qquad \qquad \quad Go to Step 3;
		\item[] \qquad \qquad End if
		\item[]{Step 2.4.} If \( \|x^{k}-x^{k-1}\| \le \epsilon \) then
		\item[] \qquad \qquad \quad Go to Step 3;
		\item[]{Step 2.5.} Else
		\item[] \qquad \qquad \quad Go to Step 2.1;
		\item[]{Step 2.6.} End if
	\end{itemize}
\item[]{\bf Step 3.} Final Output and Feasibility Check.
\end{itemize}
\par\noindent\rule{\textwidth}{0.2pt}

\section{Rectified Convex Relaxations for Generic SLCCs}
\label{sec:ReCR-SLCC}

In this section, we extend the URT and two ReCR variants from LCPs to the SLCC model~\reff{mod:SLCC}. We denote the solution set of \reff{mod:SLCC} as $\mathcal{X}_{\mathrm{SLCC}}$.
Our task here is to extend the ReCR approaches to find a solution $(x^*,z^*)\in \mathcal{X}_{\mathrm{SLCC}}$ or determine the set $\mathcal{X}_\mathrm{SLCC}$ is empty.

To study feasibility of model \reff{mod:SLCC}, we temporarily fix vectors \((c_x,c_z)\) and consider the following LO model:
\begin{subequations}\label{mod:PLO-SLCC}
\begin{eqnarray}
\min &&\quad c_x^T x + c_z^T z \\[.2em]
\text{s.t.} &&\quad Mx + Qz \geq r \\
&&\quad Ax + Bz = b \\
&&\quad x,z \geq 0.
\end{eqnarray}
\end{subequations}
 The dual of the above problem reads as
\begin{subequations}\label{mod:DLO-SLCC}
\begin{eqnarray}
\max &&\quad r^T u + b^T v \\[.2em]
\text{s.t.} &&\quad M^T u + A^T v \leq c_x \\
&&\quad Q^T u + B^T v = c_z \\
&&\quad u\geq 0, \; v\text{ free}.
\end{eqnarray}
\end{subequations}
Similar to Theorem~\ref{thm:URT}, we have the following URT for SLCCs.
\begin{theorem}\label{thm:URT-SLCC}
The set \(\mathcal{X}_{\mathrm{SLCC}}\) is nonempty if and only if there exist vectors \((c_x,c_z)\) such that the primal--dual pair \eqref{mod:PLO-SLCC}--\eqref{mod:DLO-SLCC} admits an optimal solution \((x^c,z^c;u^c,v^c)\) satisfying \(x^c \leq u^c\) component-wise.
\end{theorem}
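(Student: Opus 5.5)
The plan mirrors the proof of Theorem~\ref{thm:URT}: build the objective $(c_x,c_z)$ from a known SLCC solution for necessity, and use LO complementary slackness for sufficiency. The only new features compared with the LCP case are the equality constraints $Ax+Bz=b$, whose multiplier $v$ is free, and the extra variable block $z$, whose dual constraint is an equality.

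\emph{Necessity.} Assume $(x^*,z^*)\in\mathcal{X}_{\mathrm{SLCC}}$, and choose
\[
u^*=x^*,\qquad v^*=0,\qquad c_x=M^Tx^*,\qquad c_z=Q^Tx^*.
\]
Dual feasibility in \eqref{mod:DLO-SLCC} is then automatic: $M^Tu^*+A^Tv^*=c_x$ holds with equality, $Q^Tu^*+B^Tv^*=c_z$, and $u^*=x^*\ge 0$. Primal feasibility of $(x^*,z^*)$ in \eqref{mod:PLO-SLCC} is just \eqref{LC-1} together with $x^*,z^*\ge 0$. The duality gap is
\[
c_x^Tx^*+c_z^Tz^*-r^Tu^*-b^Tv^*=(x^*)^T(Mx^*+Qz^*-r)=0
\]
by the complementarity condition \eqref{CC}. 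By weak duality, a primal--dual feasible pair with zero gap is optimal. Hence $(x^*,z^*;x^*,0)$ is an optimal pair, and it satisfies $x^c=u^c$, so in particular $x^c\le u^c$.

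\emph{Sufficiency.} Let $(x^c,z^c;u^c,v^c)$ be optimal with $x^c\le u^c$. Primal feasibility already gives \eqref{LC-1} and $x^c,z^c\ge 0$, so only complementarity needs checking. The LO complementary slackness conditions for the pair \eqref{mod:PLO-SLCC}--\eqref{mod:DLO-SLCC} include
\[
u^c_i\,(Mx^c+Qz^c-r)_i=0\qquad\text{for all } i\in\mathcal{I}.
\]
For each $i$ there are two cases:
\begin{itemize}
\item if $x^c_i>0$, then $u^c_i\ge x^c_i>0$, so $(Mx^c+Qz^c-r)_i=0$;
\item if $x^c_i=0$, the product $x^c_i(Mx^c+Qz^c-r)_i$ vanishes trivially.
\end{itemize}
Since every term is also nonnegative, summing gives $(x^c)^T(Mx^c+Qz^c-r)=0$, so $(x^c,z^c)\in\mathcal{X}_{\mathrm{SLCC}}$.

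There is no real obstacle here. The one point needing care is to confirm that complementary slackness applies to the inequality block $Mx+Qz\ge r$ with its multiplier $u\ge 0$, while the free multiplier $v$ plays no role. Choosing $v^*=0$ in the necessity part keeps the $b^Tv$ term out of the gap computation.
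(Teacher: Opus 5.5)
Your necessity argument coincides with the paper's: the same $c_x=M^Tx^*$, $c_z=Q^Tx^*$, $(u^*,v^*)=(x^*,0)$, with zero gap from \eqref{CC}. It is sound, because every point feasible for \eqref{mod:DLO-SLCC} is feasible for the true LP dual, so weak duality applies. The gap is in sufficiency, at the step where you invoke complementary slackness $u^c_i(Mx^c+Qz^c-r)_i=0$. That identity needs strong duality, and \eqref{mod:DLO-SLCC} is not the exact LP dual of \eqref{mod:PLO-SLCC}. Since $z\ge 0$, the $z$-block of the dual must read $Q^Tu+B^Tv\le c_z$. The equality, which you explicitly accept when you say the $z$-block's dual constraint is an equality, is the dual constraint for a \emph{free} $z$. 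With the equality, \eqref{mod:DLO-SLCC} is only a restriction of the dual. Its optimal value can then lie strictly below the primal one, and complementary slackness fails.

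In fact the ``if'' direction is false as literally stated. Take $n=m=p=1$, $M=Q=A=1$, $B=0$, $r=0$, $b=1$. The SLCC forces $x=1$, and then $x(Mx+Qz-r)=1+z>0$, so $\mathcal{X}_{\mathrm{SLCC}}=\varnothing$. Now set $c_x=0$, $c_z=1$. The primal optimum is $(x^c,z^c)=(1,0)$ with value $0$. Meanwhile \eqref{mod:DLO-SLCC} forces $u=1$ and $v\le -1$, so $(u^c,v^c)=(1,-1)$ is optimal with value $-1$. Here $x^c\le u^c$ holds, but $u^c(Mx^c+Qz^c-r)=1\neq 0$.

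The paper's one-line appeal to ``the duality theory for LO'' hides exactly the same gap. So your proof is faithful to the paper, but neither is complete as written. The fix is easy either way. If you replace the equality by $Q^Tu+B^Tv\le c_z$, strong duality holds for the pair and your sufficiency argument goes through verbatim. If you keep the equality, add zero duality gap to the condition in the theorem. For primal-feasible $(x,z)$ and dual-feasible $(u,v)$ one has
\[
c_x^Tx+c_z^Tz-r^Tu-b^Tv=(c_x-M^Tu-A^Tv)^Tx+u^T(Mx+Qz-r),
\]
a sum of nonnegative terms. A zero gap therefore gives $u_i(Mx+Qz-r)_i=0$ for every $i$, and your case analysis with $x\le u$ finishes the argument. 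Your necessity construction already produces a zero-gap pair, so the equivalence survives in this form.
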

\begin{proof}
    The sufficiency part of the theorem follows directly from the duality theory for LO. It remains to prove the necessity part of the theorem.

    Suppose $(x^*,z^*) \in \mathcal{X}_{\mathrm{SLCC}}$. Choose $c_x=M^Tx^*$, and $c_z=Q^T x^*$. Then $(u^*, v^*) = (x^*, 0)$ is feasible for the dual problem \reff{mod:DLO-SLCC}. Moreover, since $(x^*)^T(Mx^*+Qz^*-r)=0$, it follows immediately $c_x^Tx^*+c_z^Tz^*=r^Tx^*$. Hence, $(x^*,z^*)$ and $(u^*,v^*) = (x^*,0)$ are primal--dual optimal for \reff{mod:PLO-SLCC}--\reff{mod:DLO-SLCC}. This completes the proof of the theorem.
\end{proof}

Based on  Theorem \ref{thm:URT-SLCC}, we can address \reff{mod:SLCC} through the following bilinear optimization problem, in which $(c_x,c_z)$ are also decision variables:
\begin{subequations}\label{mod:PDLO-SLCC}
\begin{eqnarray}
\min &&\quad c_x^T x + c_z^T z - r^T u - b^T v \\[.2em]
\text{s.t.} &&\quad Mx + Qz \geq r,\; Ax + Bz = b,\label{eq:PDLO-P}\\
&&\quad M^T u + A^T v \leq c_x,\; Q^T u + B^T v = c_z,\label{eq:PDLO-D}\\
&&\quad x \leq u,\; x,z,u\geq 0,\; v\text{ free}.\label{eq:PDLO-link}
\end{eqnarray}
\end{subequations}
The following proposition is a direct consequence of Theorem \ref{thm:URT-SLCC}.
\begin{proposition}\label{prop:PDLO-SLCC}
Let \((c_x^{*}, c_z^{*}, x^{*}, z^{*}, u^{*}, v^{*})\) be a global minimizer of \eqref{mod:PDLO-SLCC}. If
\[
c_x^{*T} x^{*} + c_z^{*T} z^{*} - r^T u^{*} - b^T v^{*} = 0,
\]
then \((x^{*},z^{*}) \in \mathcal{X}_{\mathrm{SLCC}}\); otherwise, \(\mathcal{X}_{\mathrm{SLCC}} = \varnothing\).
\end{proposition}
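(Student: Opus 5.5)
The plan has three steps: show the objective of \eqref{mod:PDLO-SLCC} is nonnegative on the feasible set, show that a zero value yields a point of $\mathcal{X}_{\mathrm{SLCC}}$, and use Theorem~\ref{thm:URT-SLCC} to show that zero is attained whenever $\mathcal{X}_{\mathrm{SLCC}}\neq\varnothing$.

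\emph{Nonnegativity.} Fix any feasible $(c_x,c_z,x,z,u,v)$. For fixed $(c_x,c_z)$, the pair $(x,z)$ is feasible for \eqref{mod:PLO-SLCC} and $(u,v)$ is feasible for \eqref{mod:DLO-SLCC}. Weak duality then gives $c_x^Tx+c_z^Tz\ge r^Tu+b^Tv$, so the objective is nonnegative. I would check this directly:
\[
c_x^Tx+c_z^Tz-r^Tu-b^Tv=(c_x-M^Tu-A^Tv)^Tx+u^T(Mx+Qz-r),
\]
using $c_z=Q^Tu+B^Tv$ and $Ax+Bz=b$. Both terms are nonnegative by feasibility. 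Hence the global optimal value is at least $0$.

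\emph{Zero value gives a solution.} Suppose the global minimizer has objective value $0$. By the identity above, both nonnegative terms vanish, so $(u^*)^T(Mx^*+Qz^*-r)=0$. Since $0\le x^*\le u^*$ and $Mx^*+Qz^*-r\ge0$, we get $0\le (x^*)^T(Mx^*+Qz^*-r)\le (u^*)^T(Mx^*+Qz^*-r)=0$. Together with the primal constraints and $x^*,z^*\ge0$, this gives $(x^*,z^*)\in\mathcal{X}_{\mathrm{SLCC}}$. (Equivalently, one can invoke the sufficiency half of Theorem~\ref{thm:URT-SLCC}.)

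\emph{Positive value gives infeasibility.} Argue by contraposition. If $\mathcal{X}_{\mathrm{SLCC}}\neq\varnothing$, the necessity part of Theorem~\ref{thm:URT-SLCC} supplies the explicit point $c_x=M^Tx^*$, $c_z=Q^Tx^*$, $(u,v)=(x^*,0)$. This point is feasible for \eqref{mod:PDLO-SLCC}, including the link constraint $x\le u$, which holds with equality. Its objective value is $(x^*)^T(Mx^*+Qz^*-r)=0$. Since the objective is nonnegative everywhere, the global optimum equals $0$. So a strictly positive global optimum forces $\mathcal{X}_{\mathrm{SLCC}}=\varnothing$.

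The main subtlety is in this last step, which tacitly assumes a global minimizer exists. The statement is phrased in terms of a given global minimizer, so existence is not part of the claim. Still, the dichotomy holds independently of existence: the infimum is $0$ whenever the system is feasible, because the zero value is attained. No boundedness argument over the unbounded variable $c$ is therefore needed.
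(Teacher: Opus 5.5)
Your proof is correct and follows the route the paper intends. The paper calls the proposition a direct consequence of Theorem~\ref{thm:URT-SLCC}, meaning weak duality for the fixed-$(c_x,c_z)$ LO pair together with the explicit zero-objective point $(M^T\hat{x},Q^T\hat{x},\hat{x},\hat{z},\hat{x},0)$ for any $(\hat{x},\hat{z})\in\mathcal{X}_{\mathrm{SLCC}}$, and that is exactly your argument. Your gap identity and the step $0\le (x^*)^T(Mx^*+Qz^*-r)\le (u^*)^T(Mx^*+Qz^*-r)=0$ supply details the paper leaves implicit; in the last step, rename the planted SLCC solution (e.g.\ to $(\hat{x},\hat{z})$) so it does not clash with the minimizer's $x^*$.
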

Similar optimality conditions can be derived by fixing $(c_x,c_z)$ in \reff{mod:PDLO-SLCC} and analyzing the corresponding primal--dual LP pair.
Yet for computational consideration, we design SLCC ReCR methods using the optimal solution of the primal LO model~\reff{mod:PLO-SLCC}.

\subsection{Primal rectified convex relaxation for SLCCs}\label{subsec:P-ReCR-SLCC}

An important step in the development of ReCR approaches is how to choose the rectification vector.  For fixed vectors \((c_x,c_z)\),  let \((x^p,z^p)\) be a optimal solution of \eqref{mod:PLO-SLCC} satisfying strict complementarity.  As in P-ReCR for LCPs, define the rectification vector \( s^p \) component-wise by
\begin{equation}\label{Scheme:P-ReCR-SLCC}
s^p_i =
\begin{cases}
(Mx^p + Qz^p - r)_i \quad &\text{if } x^p_i > 0, \ (Mx^p + Qz^p - r)_i > 0 \\[6pt]
0 & \text{otherwise.}
\end{cases}
\end{equation}

\noindent We immediately have the following result.
\begin{theorem}
The rectification vector \( s^p \geq 0 \) and \( s^p = 0 \) if and only if \( (x^p,z^p) \) is a feasible solution to the original SLCC \reff{mod:SLCC}.
\end{theorem}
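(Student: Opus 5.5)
The argument is a direct componentwise check based on definition \eqref{Scheme:P-ReCR-SLCC}. It mirrors Theorem~\ref{thm:P-Rec-Func} for LCPs, with $Mx^p-r$ replaced by the residual $w^p:=Mx^p+Qz^p-r$. Throughout, we use that $(x^p,z^p)$ is an optimal, hence feasible, solution of \eqref{mod:PLO-SLCC}. Consequently $x^p\ge 0$, $z^p\ge 0$, $Ax^p+Bz^p=b$ and $w^p\ge 0$; that is, all linear constraints \eqref{LC-1} and the sign constraints in \eqref{CC} hold automatically. Only the complementarity condition $(x^p)^Tw^p=0$ remains to be related to $s^p$.

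\emph{Nonnegativity.} For each $i\in\mathcal{I}$, either $s^p_i=0$, or $s^p_i=w^p_i$ with $w^p_i>0$. In both cases $s^p_i\ge 0$, so $s^p\ge 0$.

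\emph{Characterization of $s^p=0$.} Since $x^p\ge 0$ and $w^p\ge 0$, every term of $(x^p)^Tw^p=\sum_i x^p_iw^p_i$ is nonnegative. Hence $(x^p)^Tw^p=0$ holds if and only if $x^p_iw^p_i=0$ for every $i$, that is, if and only if there is no index with $x^p_i>0$ and $w^p_i>0$. The latter is exactly the index set on which $s^p$ can be nonzero, and on that set $s^p_i=w^p_i>0$.

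We now conclude both directions. If $s^p=0$, that index set is empty, so complementarity holds and $(x^p,z^p)\in\mathcal{X}_{\mathrm{SLCC}}$. Conversely, if $(x^p,z^p)$ is feasible for \reff{mod:SLCC}, then $x^p_iw^p_i=0$ for all $i$, so no index satisfies the first case of \eqref{Scheme:P-ReCR-SLCC}, and therefore $s^p=0$.

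No step is a genuine obstacle. The only point requiring care is to invoke the feasibility of $(x^p,z^p)$ for \eqref{mod:PLO-SLCC}, which supplies $w^p\ge 0$. This is what lets the vanishing of the inner product reduce to the vanishing of each product $x^p_iw^p_i$. The strict complementarity assumption on $(x^p,z^p)$ is not needed for this statement.
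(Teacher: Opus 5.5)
Your proof is correct and follows the paper's approach: the paper states this result as immediate from definition \eqref{Scheme:P-ReCR-SLCC} and the feasibility of $(x^p,z^p)$ for \eqref{mod:PLO-SLCC}, with no written proof. Your componentwise check is that omitted verification: because $x^p\ge 0$ and $Mx^p+Qz^p-r\ge 0$, complementarity holds exactly when the first case of the definition never occurs. You are also right that strict complementarity is not needed.
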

Like in the case for LCP,  we introduce the following safe-guide direction for SLCC:
\begin{eqnarray}\label{def:Safe-Guide-SLCC} \hat{c}_i=\left\{ \begin{array}{ll}
    (M^T x^p)_i + \tau & \mbox{ if $i \in \mathcal{I}^{x^p}_0$} \\
    (M^Tx^p)_i & \mbox{ if $x^p_i>0$.} \end{array}\right.
\end{eqnarray}
Then we    propose to   rectify the vector $c$ via  the following scheme:
\begin{eqnarray}\label{P-ReCR:SLCC}
c_x^+=\min(\hat{c}, \max(M^Tx^p,c_x))+s^p.
\end{eqnarray}
From the above rectification scheme and the choice of $s^p$, it follows immediately
\begin{theorem}\label{thm-2:P-ReCR-SLCC}
Let $(x^p,z^p)$ and $(x^+,z^+)$ be exact optimal solutions of \reff{mod:PLO-SLCC} with cost vectors $(c_x,c_z)$ and $(c_x^+,c_z^+)$, respectively.
Then $x^p=x^+$ if and only if $s^p=s^+$. Moreover, if $x^p=x^+$,  then $x^p$ is also a  stationary point of model \reff{mod:QO-SLCC}.
 \end{theorem}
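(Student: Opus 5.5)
The plan follows the proof of Theorem~\ref{thm-2:P-ReCR}, carried over to the SLCC setting. Throughout, $c_z$ is held fixed: $c_z^+=c_z$, and only $c_x$ is rectified by \reff{P-ReCR:SLCC}. The model \reff{mod:QO-SLCC} is understood as minimizing $f(x,z)=x^T(Mx+Qz-r)$ over the polyhedron defined by \reff{LC-1} and $x,z\ge 0$.

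\textbf{First claim: $x^p=x^+$ iff $s^p=s^+$.} If $x^p=x^+$, I will argue that one may take $z^+=z^p$. The justification is that $(x^p,z^p)$ remains optimal for the rectified LO whenever the rectification does not change the optimal face, which is exactly the case being considered. Then $Mx^++Qz^+-r=Mx^p+Qz^p-r$, so definition \reff{Scheme:P-ReCR-SLCC} gives $s^+=s^p$. Conversely, $s^p=s^+$ means the index set $\mathcal{I}^{x^p}_+$ and the positive slacks on it coincide for the two solutions. I will then use this as in Theorem~\ref{thm-2:P-ReCR}: the rectified cost $c_x^+$ built from $x^p$ and from $x^+$ is the same, so both points solve the same LO. Uniqueness of the strictly complementary optimal solution, selected as in the LCP case, then forces $x^p=x^+$. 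I expect this direction to be essentially definitional, as the paper asserts.

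\textbf{Second claim: stationarity.} I split $c_x^+=c^{+1}+c^{+2}$ exactly as in the proof of Theorem~\ref{thm-2:P-ReCR}. For $i\notin\mathcal{I}^{x^p}_+$, set $c^{+1}_i=\hat c_i$ and $c^{+2}_i=0$. For $i\in\mathcal{I}^{x^p}_+$, set $c^{+1}_i=0$ and $c^{+2}_i=(M^Tx^p)_i+s^p_i=(M^Tx^p)_i+(Mx^p+Qz^p-r)_i$. Correspondingly, I split
\[
f(x,z)=\sum_{i\in\mathcal{I}^{x^p}_+}x_i(Mx+Qz-r)_i+\sum_{i\notin\mathcal{I}^{x^p}_+}x_i(Mx+Qz-r)_i .
\]
The second sum is nonnegative on the feasible set and vanishes at $(x^p,z^p)$, so $(x^p,z^p)$ globally minimizes it; hence its contribution is consistent with any KKT certificate. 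For the first sum, the partial derivative of $x_i(Mx+Qz-r)_i$ in $x_i$ at $(x^p,z^p)$ is $(Mx^p+Qz^p-r)_i+(M^T)_{ii}x^p_i$. This is the $i$-th component of the gradient, matching $c^{+2}_i$ in the same diagonal-term sense used in the LCP proof. Since $(x^+,z^+)=(x^p,z^p)$ is optimal for \reff{mod:PLO-SLCC} with cost $(c_x^+,c_z)$, LO duality supplies multipliers $(u,v)$ with $u\ge0$ satisfying the KKT system. I will then transfer these multipliers to a KKT system for the quadratic model, using $\nabla_x f=(M+M^T)x+Qz-r$ and $\nabla_z f=Q^Tx$.

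\textbf{Main obstacle.} The hardest step is the $z$-block. The $z$-gradient of $f$ is $Q^Tx^p$, while the LO uses the fixed cost $c_z$. To close the argument I will either interpret stationarity in the same relaxed sense as the LCP theorem, namely with respect to the $x$-block given the active pattern, or invoke the $z$-analogue of the URT choice $c_z=Q^Tx^p$ from Theorem~\ref{thm:URT-SLCC}. The second option would mean assuming $c_z$ is updated to $Q^Tx^p$ along with $c_x$, so that the $z$-components of the KKT system match. I would try the latter first, since that is the natural rectification suggested by Theorem~\ref{thm:URT-SLCC}.
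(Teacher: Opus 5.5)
You follow the same template as the paper, whose proof of this theorem just defers to the proof of Theorem~\ref{thm-2:P-ReCR}. As written, however, your argument has genuine gaps in both claims.

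\textbf{The equivalence.} In the SLCC setting $s^p$ depends on $z^p$ as well as $x^p$. So $x^p=x^+$ gives $s^p=s^+$ only if $z^+=z^p$, and ``one may take $z^+=z^p$'' replaces the given optimal pair $(x^+,z^+)$ by a chosen one. With $c_z^+=c_z$ that choice is at least admissible, but for a different reason than the ``unchanged optimal face'' you cite: $(x^p,z^+)$ is feasible for the old LO, so $c_z^Tz^p\le c_z^Tz^+$, and hence $(x^p,z^p)$ is optimal for the rectified LO.

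Your converse does not work, for three reasons. First, $c_x^+$ is not a function of $s^p$ alone; it involves $M^Tx^p$, $\hat c$ and the old $c_x$. Second, $x^p$ is known to be optimal for $c_x$, not for $c_x^+$, so nothing says the two points solve the same LO. Third, strictly complementary optimal solutions are not unique; only the optimal partition is. The converse in fact fails in the LCP case $M=0$, $r=0$, $c=(1,0)^T$: here $c^+=c$, and $x^p=(0,1)^T$, $x^+=(0,2)^T$ are both strictly complementary optima with $s^p=s^+=0$.

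\textbf{Stationarity.} The ``diagonal-term'' matching is not an identity. We have $\partial_{x_i}[x_i(Mx+Qz-r)_i]=(Mx+Qz-r)_i+M_{ii}x_i$, whereas $c^{+2}_i=(M^Tx^p)_i+s^p_i$ is the $i$-th entry of the full gradient $\nabla_xf=M^Tx+(Mx+Qz-r)$. What transferring the LO multipliers actually requires is that $\nabla f(x^p,z^p)-(c_x^+,c_z^+)$ vanish on the support of $(x^p,z^p)$ and be nonnegative off it. This holds exactly on the support of $x^p$. It can fail on $\{i:x^p_i=0\}$, where $(c_x^+)_i$ may equal $(M^Tx^p)_i+\tau$. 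It also fails in the $z$-block, where $\nabla_zf=Q^Tx^p$ is unrelated to $c_z$.

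These are real failures, not bookkeeping. Consider the LCP with $M=\bigl(\begin{smallmatrix}-1&0\\-1&1\end{smallmatrix}\bigr)$, $r=(-1.5,-1)^T$, $c=(-1,10)^T$, $\tau=10$, $\theta=1$. The point $x^p=(1,0)^T$ is the unique LO optimum for both $c$ and $c^+=(-0.5,10)^T$. Yet $\nabla f(x^p)=(-0.5,0)^T$, and $(1,1)^T$ is a feasible descent direction.

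For the $z$-block, take $n=m=p=1$, $M=0$, $Q=1$, $r=0$, $A=1$, $B=0$, $b=1$. The LO feasible set is $\{(1,z):z\ge 0\}$, and \reff{mod:QO-SLCC} minimizes $xz$ on it. With $c_z=0$, the point $(x^p,z^p)=(1,1)$ is a strictly complementary optimum, with $s^p=1$ and $c_x^+=1$.
\begin{itemize}
\item If $c_z$ is held fixed, as you assume throughout, then $(1,1)$ stays optimal but is not a KKT point, since $\partial_z(xz)=1$ at an interior $z$. Moreover $(1,2)$ is also optimal, with $s^+=2\neq s^p$.
\item If $c_z^+=Q^Tx^p=1$, as in your second fix, the unique rectified optimum is $(1,0)$. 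Then $x^+=x^p$ while $s^+=0\neq s^p$.
\end{itemize}
Your first fix, reading stationarity as $x$-block stationarity, simply changes the statement.

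So neither route closes the argument without additional hypotheses or a modified statement. The paper's ``similar vein'' remark does not supply such a hypothesis either.
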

 \begin{proof} The proof of the theorem follows a similar vein as that of Theorem~\ref{thm-2:P-ReCR} and thus the detail is omitted.\end{proof}

The new ReCR approach for SLCCs works as follows. First we use a similar strategy like in P-ReCR for LCP to find a starting solution. Then we resolve problem \eqref{mod:PLO-SLCC} using the rectified vector \( (c_x^{k}, c_z^k) \) and repeat the process  until a stopping criterion is met.
Since this algorithm operates only in the primal space, we name it \textit{primal ReCR for SLCC} (P-ReCR-SLCC).

\par\noindent\rule{\textwidth}{0.4pt}
	\noindent{\bf \hspace*{1.5cm} Primal Rectified Convex Relaxation for SLCC}
\par\noindent\rule{\textwidth}{0.4pt}
\begin{itemize}\itemsep-2pt
	\item[]{\bf Input:} \( M, Q, A, B, r, b, \theta, \tau \), tolerance \( \epsilon > 0 \), maximum iterations \( K \);
	\item[]{\bf Step 0.} Initialization
	\begin{itemize}\itemsep-2pt
        \item[]{ Step 0.1.} Set \( k = 0 \), \( c_x^{k} = \mathbf{1} \), \( c_z^k = \mathbf{0} \);
		\item[]{ Step 0.2.} Solve \eqref{mod:PLO-SLCC} with $(c_x,c_z) = (c^{k}, c_z^k)$ and obtain \( (x^{k}, z^{k}) \);
        \item[]{Step 0.3.} If problem \ref{mod:PLO-SLCC} is infeasible then
        \item[]{} \qquad \qquad \quad The SLCC is infeasible;
        \item[]{} \qquad \qquad Else if $ (x^{k})^T(Mx^k+Qz^k-r)\le \epsilon$ or $|\mathcal{I}_+(x^k,z^k)|=0$, then
        \item[]{} \qquad \qquad \quad Output $(x^k, z^k)$ as the final solution;
        \item[]{}\qquad \qquad End if
        \item[] { Step 0.4.} Update \( k = k+1 \);
        \item[] { Step 0.5.} Set \( c_x^{k} = M^T x^{k-1}, c_z^k=Q^T x^{k-1} \);
	    \item[] { Step 0.6.} Solve \eqref{mod:PLO-SLCC} with \( (c_x^k, c_z^k) \) to obtain \( (x^{k}, z^{k}) \);
	\end{itemize}
		\item[]{\bf Step 1.} Solving process
	\begin{itemize}\itemsep-2pt
        \item[]{ Step 1.1.} Update \( k = k + 1 \);
        \item[]{ Step 1.2.} Compute \( s^p \) based on \eqref{Scheme:P-ReCR-SLCC};
        \item[]{ Step 1.3.} Rectify the vector \( c \) by \ref{P-ReCR:SLCC};
		\item[]{ Step 1.4.} Solve \eqref{mod:PLO-SLCC} with  \( (c_x^{k}, c_z^k) \); obtain \( (x^{k}, z^{k}) \);
        \item[]{ Step 1.5.} If \( \| (x^{k}, z^{k}) - (x^{k-1}, z^{k-1}) \| \geq \epsilon \), go to Step 1.1;
	\end{itemize}
		\item[]{\bf Step 2.} Output \( (x^*, z^*) := (x^{k}, z^{k}) \) and check feasibility.
\end{itemize}
\par\noindent\rule{\textwidth}{0.4pt}

Next we study the convergence of P-ReCR-SLCC under the following assumptions.
\begin{assumption}\label{Assump3}
      The global optimal solution to
      \begin{subequations}\label{mod:QO-SLCC}
      \begin{eqnarray}
      \min &&\quad x^{\!T}(Mx+Qz-r)\\
      \mathrm{s.t.} &&\quad Mx+Qz\ge r, \; Ax+Bz=b,\; x,z\ge0
      \end{eqnarray}
      \end{subequations}
      is bounded.
      \end{assumption}
\begin{assumption}\label{Assump4}
      For every cost vector $c\ge0$, the primal LO
      \begin{subequations}\label{mod:PLO-SLCC-c}
      \begin{eqnarray}
      \min &&\quad c^Tx\\
      \mathrm{s.t.} &&\quad Mx+Qz\ge r,\; Ax+Bz=b,\; x,z\ge0
      \end{eqnarray}
      \end{subequations}
      is strictly feasible.
\end{assumption}
The following theorem can be proved via following a similar vein as in the proofs of their counterparts Theorem~\ref{mainthm1:P-ReCR} for LCP. Therefore, we omit the detailed proofs here.
\begin{theorem}\label{mainthm1:P-ReCR-SLCC} Suppose that Assumption~\ref{Assump4} holds. Then P-ReCR-SLCC terminates, within $\epsilon$-tolerance, at either a feasible solution of the original SLCC or a stationary point of the QO model.
\end{theorem}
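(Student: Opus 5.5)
The plan is to follow the LCP argument of Theorem~\ref{mainthm1:P-ReCR}, adapted to the SLCC data. Throughout I use that the optimal solution set of \eqref{mod:PLO-SLCC} is invariant under positive scaling of $(c_x^k,c_z^k)$. I therefore normalize the cost sequence and pass to a subsequence with $(c_x^k,c_z^k)/\|(c_x^k,c_z^k)\|\to(\bar c_x,\bar c_z)$. The vectors $M^Tx^p+s^p$ and $\hat c$ entering \eqref{P-ReCR:SLCC} need not be nonnegative, so I cannot get $\bar c_x\ge0$ for free. To place the limit cost in the scope of Assumption~\ref{Assump4}, I would first add to the rectification a constant shift in $c_x$. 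This is harmless because the $x$-block is bounded on the relevant face. Assumption~\ref{Assump4} then guarantees that \eqref{mod:PLO-SLCC-c} at the limit cost is strictly feasible. Together with boundedness of the optimal value, this gives a nonempty compact optimal face. Let $(\bar x,\bar z)$ be the strictly complementary optimal solution at the limit cost.

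Second, I would show convergence of the iterates. In the non-degenerate case, the optimal basis of \eqref{mod:PLO-SLCC} at the limit cost is unique. The LO sensitivity result \cite{Holder1997LO-SA} then gives that, for $k$ large, $(x^k,z^k)$ is produced by the same basis. Hence $(x^k,z^k)=(\bar x,\bar z)$ eventually; in particular $x^{k}=x^{k+1}$, and the stopping test of Step~1.5 fires. The remark after Theorem~\ref{mainthm1:P-ReCR} supports this: the safe-guide direction \eqref{def:Safe-Guide-SLCC} adds $\tau$ to the costs of zero components, which keeps their reduced costs strictly positive and so promotes non-degeneracy. There is one gap here. If the iterates never settle, I would argue that the finitely many bases visited force the sequence $\{x^k\}$ to take finitely many values. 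The $\epsilon$-test then reduces to the stationary case.

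Third, once $x^k=x^{k+1}$ holds, I would invoke Theorem~\ref{thm-2:P-ReCR-SLCC}, which gives $s^k=s^{k+1}$ and shows that $x^k$ is a stationary point of \eqref{mod:QO-SLCC}. Proving that theorem amounts to repeating the decomposition $c_x^+=c^{+1}+c^{+2}$ from Theorem~\ref{thm-2:P-ReCR}. On $\mathcal I_+$, $c^{+2}_i=(M^Tx^p)_i+s^p_i$ is the partial gradient of $x_i(Mx+Qz-r)_i$. On the complement, the summand vanishes at $x^p$, which is a global minimizer of that part. The $z$-gradient is $Q^Tx^p$, matching the choice $c_z=Q^Tx^p$ made in Step~0.5. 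The KKT conditions of \eqref{mod:PLO-SLCC} at $c^+$ then coincide with the stationarity conditions of \eqref{mod:QO-SLCC}. Finally, I would split into cases. If $s^k=0$, the preceding theorem on $s^p$ says $(x^k,z^k)\in\mathcal X_{\mathrm{SLCC}}$. Otherwise $(x^k,z^k)$ is a stationary point of \eqref{mod:QO-SLCC}, all within $\epsilon$ tolerance.

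I expect the main obstacle to be the second step, namely guaranteeing actual termination rather than mere accumulation. This requires control of the normalized costs and non-degeneracy of the limit LO, which the statement only assumes implicitly through Assumption~\ref{Assump4}. I would handle it with the finite-basis argument above, and if necessary state non-degeneracy explicitly as in Theorem~\ref{mainthm1:P-ReCR}.
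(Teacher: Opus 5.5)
Your plan follows the paper's own route. The paper's proof simply defers to Theorem~\ref{mainthm1:P-ReCR}: take an accumulation point of the costs, use non-degeneracy and \cite{Holder1997LO-SA}, then apply Theorem~\ref{thm-2:P-ReCR-SLCC}. The steps you add to get actual termination do not go through. Sensitivity at an accumulation point $\bar c$ gives $x^{k_j}=\bar x$ only along the subsequence $c^{k_j}\to\bar c$. The next cost $c^{k_j+1}$ comes out of the discontinuous map \eqref{P-ReCR:SLCC}, so nothing forces $x^{k_j+1}=x^{k_j}$, and that is exactly what Step~1.5 and Theorem~\ref{thm-2:P-ReCR-SLCC} require. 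The finite-basis fallback does not help, because finitely many values does not mean eventually constant. A cycle through distinct vertices keeps $\|x^k-x^{k-1}\|$ bounded away from $0$. Excluding cycles needs a quantity that strictly decreases whenever $x^{k+1}\neq x^k$, and P-ReCR has none; ReQR, by contrast, has the DC descent behind Theorem~\ref{thm:ReQRSLCC}. Your claim that the LO and QO KKT systems ``coincide'' is also false off the support of $x^p$. There $c^+_i=(M^Tx^p)_i+\sigma_i$, with $\sigma_i\in[0,\tau]$ inherited from the previous cost through the $\min/\max$ safe-guide, while $\partial_{x_i}\varphi=(M^Tx^p)_i+(Mx^p+Qz^p-r)_i$. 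So the QO multiplier of $x_i\ge0$ is the LO reduced cost plus $(Mx^p+Qz^p-r)_i-\sigma_i$, which can be negative. Two smaller points. You need $c_z=Q^Tx^p$ at the current iterate, but Step~0.5 sets $c_z$ only once and \eqref{P-ReCR:SLCC} never updates it. The shift $c_x+\alpha\mathbf 1$ changes the LO solution whenever $\mathbf 1^Tx$ varies on the feasible set, and it is unnecessary anyway, since strict feasibility in Assumption~\ref{Assump4} does not depend on $c$.

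These gaps cannot be closed, because under Assumption~\ref{Assump4} the statement itself fails. Take $n=2$, $m=0$ (a dummy $z$ with $Q=0$, $B=0$ changes nothing), the single equation $x_1+x_2=1$, $M=\mathrm{diag}(1,2)$, $\tau=10$, and exact solves. The feasible set is the segment between $v_1=(1,0)$ and $v_2=(0,1)$, and $(\tfrac12,\tfrac12)$ is strictly feasible. Complementarity forces $x=0$, so the SLCC is infeasible. After the start, every LO solution is a unique vertex, hence strictly complementary.

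For $r=(-10,-10)^T$, rule \eqref{P-ReCR:SLCC} gives $c_x^+=(12,\min(10,\max(0,c_2)))$ at $v_1$ and $c_x^+=(\min(10,\max(0,c_1)),14)$ at $v_2$. So after the first vertex the iterates alternate $v_1,v_2,v_1,\dots$ forever. Each vertex is a non-degenerate optimum for the cost that produced it. Neither is stationary for $\varphi=x_1^2+2x_2^2+10$ on the segment, whose only stationary point is $(\tfrac23,\tfrac13)$.

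For $r=(0,-1)^T$, the rule gives $(2,\min(10,\max(0,c_2)))$ at $v_1$ and $(\min(10,\max(0,c_1)),5)$ at $v_2$. From every strictly complementary start $(a,1-a)$ with $a\neq\tfrac23$, the cost computed at $v_1$ becomes $(2,5)$, the LO returns $v_1$ again, and Step~1.5 stops. Yet $x_1(Mx-r)_1=1$ there, and $\nabla\varphi(v_1)=(2,1)$ for $\varphi=x_1^2+2x_2^2+x_2$, so $v_2-v_1$ is a descent direction. This also refutes the fixed-point half of Theorem~\ref{thm-2:P-ReCR-SLCC}.

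The paper's ``similar vein'' argument only treats non-degenerate accumulation points and rests on the same lemma, so it does not prove the statement either. A provable version needs extra hypotheses. One option: the iteration reaches a fixed point with $\sigma_i\le(Mx^p+Qz^p-r)_i$ whenever $x^p_i=0$, together with $c_z=Q^Tx^p$. Another is a modified update with a genuine descent property.
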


Finally, we consider the scenario when P-ReCR-SLCC can only provide a solution \((x^*,z^*)\), which is not a solution of the original SLCC \reff{mod:SLCC}. Under such a circumstance, we need to check the infeasibility of the underlying SLCC. Let
\[
\mathcal{I}_+
      =\bigl\{i\in\mathcal{I}\; \bigl|\;
          x_i^{*}>0,\;
          (Mx^{*}+Qz^{*}-r)_i>0\bigr\},
\qquad
\bar{\mathcal{I}}_+=\mathcal{I}\setminus\mathcal{I}_+ .
\]
Let \(\mathcal{I}_+=\{i_1,\dots,i_s\}\), where \(s=|\mathcal I_+|\). Observe that the complementarity condition
\[
x_i(Mx+Qz-r)_i=0
\]
is equivalent to the logical disjunction
\[
x_i=0
\qquad \text{or} \qquad
(Mx+Qz-r)_i=0.
\]
Hence, every feasible solution of the original SLCC must satisfy one complementarity branch for every index \(i\in\mathcal I_+\).

For every binary vector
\[
\delta=(\delta_1,\dots,\delta_s)\in\{0,1\}^s,
\]
we define the following LP feasibility problem associated with one complete complementarity pattern:
\begin{subequations}\label{mod:Infeas-SLCC-1}
\begin{eqnarray}
\quad \min &&\quad 0 \\
\text{s.t.} &&\quad Mx+Qz\ge r,\quad x,z\ge 0,\nonumber\\
&&\quad Ax+Bz=b,\nonumber\\
&&\quad x_{i_\ell}=0 \quad \text{if } \delta_\ell=0,\qquad \ell=1,\dots,s,\nonumber\\
&&\quad (Mx+Qz-r)_{i_\ell}=0 \quad \text{if } \delta_\ell=1,\qquad \ell=1,\dots,s.\nonumber
\end{eqnarray}
\end{subequations}

Since every feasible SLCC solution must satisfy exactly one admissible complementarity branch at each violated index, the original SLCC is feasible if and only if at least one problem \reff{mod:Infeas-SLCC-1} is feasible.

This immediately yields the following result.

\begin{proposition}\label{prop:Infeas-SLCC}
If the original SLCC is feasible, then there exists a binary vector
\[
\delta\in\{0,1\}^{|\mathcal I_+|}
\]
such that the corresponding branch problem \reff{mod:Infeas-SLCC-1} is feasible.  If all branch problems \reff{mod:Infeas-SLCC-1} are infeasible, then the original SLCC is infeasible.
\end{proposition}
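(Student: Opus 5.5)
The plan is to mirror the proof of Proposition~\ref{prop:Infeas} for the LCP. I will prove the first claim directly by building a branch pattern from a feasible point. The second claim then follows as its contrapositive.

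Suppose $\mathcal{X}_{\mathrm{SLCC}}\neq\varnothing$ and fix any $(x^*,z^*)\in\mathcal{X}_{\mathrm{SLCC}}$. Here $(x^*,z^*)$ denotes an arbitrary SLCC solution, not the P-ReCR output used to define $\mathcal I_+$. To avoid a clash of notation, I will write it as $(\tilde x,\tilde z)$. By \reff{LC-1} and \reff{CC}, it satisfies
\[
M\tilde x+Q\tilde z\ge r,\quad A\tilde x+B\tilde z=b,\quad \tilde x,\tilde z\ge 0,\quad \tilde x^T(M\tilde x+Q\tilde z-r)=0 .
\]
Every summand $\tilde x_i(M\tilde x+Q\tilde z-r)_i$ is a product of two nonnegative numbers, and the sum is zero. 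Hence each summand vanishes, so for every $i\in\mathcal I$ either $\tilde x_i=0$ or $(M\tilde x+Q\tilde z-r)_i=0$. This step, which reduces the aggregate complementarity condition to componentwise disjunctions, is the only point that needs any care, and it is elementary.

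Next I define $\delta\in\{0,1\}^s$ componentwise: $\delta_\ell=0$ if $\tilde x_{i_\ell}=0$, and $\delta_\ell=1$ otherwise. In the second case, $(M\tilde x+Q\tilde z-r)_{i_\ell}=0$ by the disjunction above. If both equalities hold, either choice works, so the tie is broken toward $0$. I then check that $(\tilde x,\tilde z)$ satisfies every constraint of \reff{mod:Infeas-SLCC-1} for this $\delta$. The linear constraints hold by the feasibility of $(\tilde x,\tilde z)$ for \reff{LC-1}. The fixing constraints hold by the construction of $\delta$. Hence this branch problem is feasible.

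For the second claim, suppose every branch problem \reff{mod:Infeas-SLCC-1}, $\delta\in\{0,1\}^s$, is infeasible. If $\mathcal{X}_{\mathrm{SLCC}}$ were nonempty, the argument above would produce a feasible branch, which is a contradiction. Therefore $\mathcal{X}_{\mathrm{SLCC}}=\varnothing$.

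It is worth stating explicitly that the argument never uses how $\mathcal I_+$ was obtained. It holds for any index subset $\{i_1,\dots,i_s\}\subseteq\mathcal I$, so no property of the P-ReCR-SLCC output is needed.
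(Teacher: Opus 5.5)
Your proof is correct and follows the paper's argument: the paper omits the details and refers to the proof of Proposition~\ref{prop:Infeas}, which likewise fixes a feasible point, reads off a branch pattern $\delta$ from the componentwise complementarity disjunctions on the indices in $\mathcal I_+$, and gets the second claim as the contrapositive. Your renaming of the feasible point to avoid the clash with the P-ReCR-SLCC output, your derivation of the componentwise disjunctions from the aggregate condition, and your tie-breaking rule only make explicit steps that the paper leaves implicit.
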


The proof follows the same arguments as the proof of Proposition~\ref{prop:Infeas} for LCPs, we thus omit the details here.

We remark that Proposition~\ref{prop:Infeas-SLCC} provides a theoretical guarantee for infeasibility detection. In the worst case, this procedure may require solving up to \(2^{|\mathcal I_+|}\) LP feasibility problems. For computational consideration, we apply the recursive disjunctive search only after a relatively small active set \(\mathcal I_+\) has been identified by P-ReCR-SLCC.

Now we describe the detailed Step 2 to check the feasibility of the underlying SLCC.

\par\noindent\rule{\textwidth}{0.4pt}
	\noindent{\bf \hspace*{1.5cm} Step 2.  Final Output and Feasibility Check}
\par\noindent\rule{\textwidth}{0.4pt}
    \begin{itemize}
        \item[]{ Step 2.1} If \((x^{*})^{\!T}\!\bigl(Mx^{*}+Qz^{*}-r\bigr)\le \epsilon\) or \(|\mathcal I_+|=0\), then
        \item[]{} \qquad Output \((x^*,z^*) = (x^k,z^k)\) as a feasible solution to the SLCC;
        \item[]{Step 2.3} Else
         \item[]{}\qquad  \qquad \quad For each \(\delta\in\{0,1\}^{|{\mathcal I}_+|}\)
          \item[]{} \qquad \qquad  \qquad Solve problem \reff{mod:Infeas-SLCC-1} associated with the branch pattern \(\delta\);
           \item[]{} \qquad \qquad \quad If problem \reff{mod:Infeas-SLCC-1} is feasible then
           \item[]{} \qquad \qquad \qquad Output the corresponding feasible solution;
           \item[]{} \qquad \qquad \quad End if
           \item[]{} \qquad \qquad \quad End For
           \item[]{} \qquad \qquad \quad If all the branch problems  \reff{mod:Infeas-SLCC-1} are infeasible then
           \item[]{} \qquad \qquad \qquad The original SLCC is infeasible;
           \item[]{} \qquad \qquad \quad End if
        \item[]{Step 2.4}   End If
    \end{itemize}
\par\noindent\rule{\textwidth}{0.4pt}

\subsection{Rectified Convex Quadratic Relaxation for SLCCs}\label{subsec:ReQR-SLCC}

We next extend ReQR from LCPs to SLCCs. For this, consider the  QO \reff{mod:QO-SLCC}.
Let \(w=(x;z)\). Then the objective in \reff{mod:QO-SLCC} can be written as
\[
\varphi(x,z)=\tfrac12 w^{\!T}H^{+}w-\tfrac12 w^{\!T}H^{-}w-r^{\!T}x,
\]
where \(H^{+},H^{-}\succeq0\) are obtained from the eigenvalue decomposition (EVD) of the matrix
$$
H=\begin{bmatrix}M+M^{\!T}&Q\\ Q^{\!T}&0\end{bmatrix}, \quad H^{+}H^{-}=0.$$
At each iteration we linearize the concave part \(-\tfrac12 w^TH^{-}w\)
around the current iterate and solve the following convex QO
\begin{subequations}\label{QO:DC-SLCC}
\begin{eqnarray}
\min &&\quad \tfrac12 w^{\!T}H^{+}w-(w^{(k-1)})^{\!T}H^{-}w-r^{\!T}x \\
\text{s.t.} &&\quad Mx+Qz\ge r,\quad Ax+Bz=b,\quad x,z\ge0.
\end{eqnarray}
\end{subequations}

Now we are ready to  describe the ReQR for SLCC.
\par\noindent\rule{\textwidth}{0.4pt}
	\noindent{\bf \hspace*{.5cm} Rectified Quadratic Relaxation for SLCC}
\par\noindent\rule{\textwidth}{0.4pt}
\begin{itemize}\itemsep-2pt
	\item[]{\bf Input:} \( M, Q, A, B, r, b\), tolerance \( \epsilon > 0 \), maximum iterations \( K \);
	\item[]{\bf Step 0.} Initialization
	\begin{itemize}
		\item[]{ Step 0.1.} Compute  \( H^{+},  H^{-} \) using the EVD of \( H \);
        \item[]{ Step 0.2.} Set $k=0$;
		\item[]{ Step 0.3.} Solve model \eqref{mod:PLO-SLCC} with \( c_x^{k}=\mathbf{1},  c_z^k=\mathbf{0} \) for \( w^{k} = (x^{k}; z^{k}) \);
	\end{itemize}
		\item[]{\bf Step 1.} Solving process
	\begin{itemize}
        \item[]{ Step 1.1.} Update \( k = k + 1 \);
		\item[]{ Step 1.2.} Solve \ref{QO:DC-SLCC} using $w^{k-1}$; obtain \( w^{k} = (x^{k} , z^{k})\);
		\item[]{ Step 1.3.} If \( \| (x^{k}, z^{k}) - (x^{k-1}, z^{k-1}) \|\geq \epsilon \), go to Step 1.1;
	\end{itemize}
	\item[]{\bf Step 2.} Output \( (x^*, z^*) := (x^{k}, z^{k}) \) and check feasibility.
\end{itemize}
\par\noindent\rule{\textwidth}{0.4pt}


The convergence  of the ReQR-SLCC  follows a similar vein as that  of the standard DC programming framework \cite{Horst1999DC, Thi2011DC-LCP}. Thus, we summarize only the theoretical result and omit the detailed proof here.
\begin{theorem}\label{thm:ReQRSLCC}
Suppose that the QO subproblem at every iteration of ReQR is solved exactly with $\epsilon=0$ and Assumption~\ref{Assump3} holds.  Then   the sequence $\{x^k, z^k\}$ generated by ReQR is  bounded and converges to a stationary point of  problem \reff{mod:QO-SLCC}.
\end{theorem}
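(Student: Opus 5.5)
The plan is to cast ReQR-SLCC as the standard DCA (DC algorithm) for the difference-of-convex decomposition of \reff{mod:QO-SLCC}, and then verify three things in turn: monotone descent, boundedness, and stationarity of limit points.

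\textbf{DC decomposition and descent.} Let $F$ denote the polyhedral feasible set of \reff{mod:QO-SLCC}. Set
\[
g(w)=\tfrac12 w^TH^+w-r^Tx+\iota_F(w), \qquad h(w)=\tfrac12 w^TH^-w ,
\]
so that $\varphi=g-h$ on $F$. Both $g$ and $h$ are convex, and subproblem \reff{QO:DC-SLCC} is exactly $\min_w\, g(w)-\nabla h(w^{k-1})^Tw$. This is one DCA step.

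Convexity of $h$ gives $h(w)\ge h(w^{k-1})+\nabla h(w^{k-1})^T(w-w^{k-1})$. Combining this with optimality of $w^k$ in the subproblem yields
\[
\varphi(w^k)\le \varphi(w^{k-1})-\tfrac12 (w^k-w^{k-1})^TH^-(w^k-w^{k-1}).
\]
So $\{\varphi(w^k)\}$ is nonincreasing. It is also bounded below: Assumption~\ref{Assump3} bounds the optimal value, and in any case $\varphi\ge0$ on $F$ because $x\ge0$ and $Mx+Qz-r\ge0$. Hence $\varphi(w^k)$ converges and $\sum_k \|w^k-w^{k-1}\|_{H^-}^2<\infty$.

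\textbf{Boundedness.} This is the main obstacle. Descent only keeps the iterates in the level set $\{w\in F:\varphi(w)\le\varphi(w^0)\}$, and this set is unbounded when $F$ is. I would read Assumption~\ref{Assump3} as asserting bounded level sets (equivalently, that the solution set of \reff{mod:QO-SLCC} is bounded in the coercive sense), which immediately gives boundedness of $\{w^k\}$.

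If that reading is too strong, the fallback is a recession-direction argument, by contradiction. Suppose $\|w^k\|\to\infty$ and let $d$ be a limit of $w^k/\|w^k\|$. Then $d$ lies in the recession cone of $F$. Dividing the descent inequality by $\|w^k\|^2$ gives $d_x^T(Md_x+Qd_z)\le0$, and combined with $\varphi\ge0$ along rays this should force $d$ to be a direction along which $\varphi$ stays bounded. That would produce an unbounded set of minimizers, contradicting the assumption. The steps are: compute the limit of the objective along $w^0+td$, then conclude.

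\textbf{Stationarity and convergence.} Take a convergent subsequence $w^{k_j}\to\bar w$. To get $w^{k_j-1}\to\bar w$ as well, I would use the strong convexity of the proximal-type gap $\tfrac12\|w^k-w^{k-1}\|^2_{H^+}+\tfrac12\|w^k-w^{k-1}\|^2_{H^-}$. Since $H^++H^-=|H|$, in the nonsingular case this controls $\|w^k-w^{k-1}\|$ directly. Otherwise I would invoke the standard DCA result that $w^{k-1}$ and $w^k$ share limit points, via the KKT multipliers of the subproblem.

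Next, pass to the limit in the KKT system of \reff{QO:DC-SLCC}:
\[
H^+w^k-H^-w^{k-1}-(r;0)-\bigl[M\;Q\bigr]^T\lambda^k-\bigl[A\;B\bigr]^T\mu^k-\eta^k=0,
\]
together with complementarity. The multipliers are bounded because $F$ is polyhedral; choosing vertex multipliers for the linear constraints gives boundedness by Hoffman's lemma. In the limit, $\bar w$ satisfies the KKT conditions of \reff{mod:QO-SLCC}, since $H^+\bar w-H^-\bar w=H\bar w=\nabla\varphi(\bar w)+(r;0)$. So $\bar w$ is stationary.

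Finally, to upgrade subsequential convergence to convergence of the whole sequence, I would invoke the Kurdyka–{\L}ojasiewicz property of the piecewise quadratic $\varphi+\iota_F$, as in the known convergence theory of DCA for quadratic programs (Le Thi–Pham Dinh, \cite{Thi2011DC-LCP}). Equivalently, one can use the fact that a quadratic takes only finitely many values on the KKT points of the faces of $F$, so the limit value is attained on a single connected stationary component.
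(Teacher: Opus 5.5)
Your route matches the paper's: it treats ReQR-SLCC as DCA for $\varphi=g-h$, with $g=\tfrac12 w^TH^+w-r^Tx+\iota_F$ and $h=\tfrac12 w^TH^-w$. The paper itself offers nothing beyond a pointer to the standard DC programming framework.

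\textbf{What is correct.} Your descent step is right, and in fact the stronger bound $\varphi(w^{k-1})-\varphi(w^k)\ge\tfrac12\|w^k-w^{k-1}\|_{|H|}^2$ holds, with $|H|=H^++H^-$. Stationarity of cluster points is also right, and you need not worry about $w^{k_j-1}\to\bar w$. Only $H^-w^{k_j-1}$ enters the optimality condition $0\in H^+w^k-H^-w^{k-1}-(r;0)+N_F(w^k)$. Summability of $\|w^k-w^{k-1}\|_{H^-}^2$ already gives $H^-(w^k-w^{k-1})\to0$. Closedness of the graph of $N_F$ then replaces the multiplier bounds. The other two claims of the theorem are not established.

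\textbf{Boundedness.} Reading Assumption~\ref{Assump3} as ``bounded level sets'' replaces the hypothesis by a strictly stronger one.
As written, the assumption says either that the optimal value is finite, which is automatic since $\varphi\ge0$ on $F$, or that the solution set is bounded.
A bounded solution set does not bound the level sets, even in this class. Take $\varphi=x_1z+x_2$, i.e.\ $M=0$, $Q=(1,0)^T$, $r=(0,-1)^T$, on $\{x,z\ge0,\ x_1+x_2=1\}$. It has the unique minimizer $(1,0;0)$, yet $\{\varphi\le1\}$ contains the ray $\{(0,1;z):z\ge0\}$.

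Your fallback does not close the gap either. From $d$ in the recession cone of $F$ and $d_x^T(Md_x+Qd_z)=0$ you only get
\[
\varphi(w^0+td)=\varphi(w^0)+t\bigl[d_x^T(Mx^0+Qz^0-r)+(x^0)^T(Md_x+Qd_z)\bigr].
\]
This is affine in $t$ with a nonnegative, generally positive, slope. Even a flat ray from $w^0$ would say nothing about the solution set, because $w^0$ is not a minimizer. You would need a flat ray issuing from a global minimizer, and nothing in the argument produces one.
The standard DCA theory does not supply boundedness; it either assumes it or derives it from coercivity. So this step needs a new idea or an explicitly stronger hypothesis.

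\textbf{Convergence of the whole sequence.} $H$ has a zero block, so it is singular whenever $\ker Q\neq\{0\}$ (e.g.\ $m>n$, as in P18), and then both $H^\pm$ are singular. Your descent inequality and the relative-error bound $\mathrm{dist}\bigl(0,\partial(\varphi+\iota_F)(w^k)\bigr)\le\|H^-(w^k-w^{k-1})\|$ control only the $|H|$-seminorm. The KL argument therefore yields finite length of $\{|H|^{1/2}w^k\}$, not of $\{w^k\}$. KL-based DCA results typically assume $g$ or $h$ strongly convex, which the spectral splitting does not provide. The ``finitely many critical values'' alternative only shows that all cluster points share one value.

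In fact convergence can fail. Take $n=1$, $m=2$, $M=1$, $Q=0$, $r=0$, $A=0$, $B=(1\ \ 1)$, $b=1$. Then $H=\diag(2,0,0)=H^+$ and $H^-=0$. Every subproblem \reff{QO:DC-SLCC} is $\min\{x^2:w\in F\}$, with solution set $\{0\}\times\{z\ge0:z_1+z_2=1\}$. Exact solutions alternating between $(0;1,0)$ and $(0;0,1)$ give a bounded, non-convergent sequence, even though every level set of $\varphi$ on $F$ is bounded here.

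So ``solved exactly'' is not enough, and you need an extra hypothesis. One option is $H$ nonsingular: the decrease then holds in a norm, and your KL argument goes through once boundedness is known. Another is a proximal term $\tfrac{\rho}{2}\|w-w^{k-1}\|^2$ in the subproblem.
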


As with the ReCR methods for LCPs in Section \ref{Sec:ReCR-LCP}, P-ReCR for SLCCs is a first-order-type method and may struggle to find highly accurate solutions of the associated bilinear reformulation. To address this issue, we adopt the strategy used in H-ReCR for LCPs: switch to ReQR once a good approximate solution of the SLCC has been identified. We call the resulting method hybrid ReCR (H-ReCR) for SLCCs. Since the new H-ReCR for SLCCs follows a similar process as H-ReCR for LCPs, we omit the detailed steps here.

\section{Numerical Experiments}
\label{sec:Numerics}
In this section, we numerically examine the performance of the proposed ReCR approaches, H-ReCR and ReQR, on a variety of LCP and SLCC instances. All computations were carried out using \textsc{Python} on \textsc{Google Colab} with high-memory configurations. The LO/QO subproblems in the ReCR approaches were solved using the \textsc{Gurobi} optimizer under an academic license. The tolerance for all tests was set to \(\epsilon=10^{-6}\), with parameters \(\theta = 0.1\), \(\tau = 10\), and maximum iterations \(K = 101\).

\subsection{Numerical Experiments on LCPs}

In this subsection, we evaluate the performance of H-ReCR and ReQR on three sets of benchmark LCP instances.  The experimental study consists of three parts. First, we test the reliability and robustness  of H-ReCR and ReQR on 10  LCP instances in small scale from the literature, and compare the new ReCR approaches  with several existing LCP solvers, including Lemke’s pivoting algorithm \cite{Lemke1964LCP,Lemke1965}, the path solver \cite{Ferris1999Path} and the path-following method for the non-smooth equation system formulated using  the Fischer--Burmeister (FB) function \cite{Ferris1999MP,Fischer1992}. Particularly, the path solver developed by Ferris and Munson \cite{Ferris1999Path} has been recognized as the most successful solver for complementary problems.
 Second, we test the scalability of ReCR approaches  on large-scale  LCP instances with structured sparse matrix $M$ and simple all-1 vector $r$  from the literature. 
 Third, we test the performance of H-ReCR and ReQR on 4  types of synthetic LCP instances generated in this work.

\subsubsection{Reliability and Robustness Test on Small-scale LCP Instances}

In this subsection, we first list ten small-scale LCP instances collected from the literature and then report the reliability and robustness test results for H-ReCR and ReQR, as well as several popular existing LCP solvers.

\paragraph{LCPs P1--P7 \cite{Floudas1999}.}
These seven LCP instances are from \cite[Chapter~10]{Floudas1999}. For self-completeness, we list all these instances here.
\begin{equation*}
	M_1 =
	\begin{pmatrix}
		1 & 1 \\ 1 & 1
	\end{pmatrix},
	\qquad
	r_1 =
	\begin{pmatrix}
		1 \\ 1
	\end{pmatrix}.
\end{equation*}

\begin{equation*}
	M_2 =
	\begin{pmatrix}
		1 & 2 & 2 & \cdots & 2 \\
		0 & 1 & 2 & \cdots & 2 \\
		\vdots & \vdots & \ddots & \ddots & \vdots \\
		0 & 0 & \cdots & 0 & 1
	\end{pmatrix},
	\qquad
	r_2 = \mathbf{1}_{16}.
\end{equation*}

\begin{equation*}
	M_3 =
	\begin{pmatrix}
		0 & -1 & 2 \\
		2 & 0 & -2 \\
		-1 & 1 & 0
	\end{pmatrix},
	\qquad
	r_3 =
	\begin{pmatrix}
		3 \\ -6 \\ 1
	\end{pmatrix}.
\end{equation*}

\begin{equation*}
	M_4 =
	\begin{pmatrix}
		0 & 0 & 10 & 20 \\
		0 & 0 & 30 & 15 \\
		10 & 20 & 0 & 0 \\
		30 & 15 & 0 & 0
	\end{pmatrix},
	\qquad
	r_4 =
	\begin{pmatrix}
		1 \\ 1 \\ 1 \\ 1
	\end{pmatrix}.
\end{equation*}

\begin{equation*}
	M_5 =
	\begin{pmatrix}
		11 & 0 & 10 & -1 \\
		0 & 11 & 10 & -1 \\
		10 & 10 & 21 & -1 \\
		1 & 1 & 1 & 0
	\end{pmatrix},
	\qquad
	r_5 =
	\begin{pmatrix}
		-50 \\ -50 \\ -10 \\ 6
	\end{pmatrix}.
\end{equation*}

\begin{equation*}
	M_6 =
	\begin{pmatrix}
		11 & 0 & 10 & -1 \\
		0 & 11 & 10 & -1 \\
		10 & 10 & 21 & -1 \\
		1 & 1 & 1 & 0
	\end{pmatrix},
	\qquad
	r_6 =
	\begin{pmatrix}
		-50 \\ -50 \\ -23 \\ 6
	\end{pmatrix}.
\end{equation*}
\begingroup
\setlength{\arraycolsep}{4pt}
\small
\[
M_7 =
\begin{pmatrix}
	0_{6\times 6} &
	\begin{pmatrix}
		1 & 0\\
		1 & 0\\
		1 & 0\\
		0 & 1\\
		0 & 1\\
		0 & 1
	\end{pmatrix} &
	\begin{pmatrix}
		-1 & 0 & 0\\
		0 & -1 & 0\\
		0 & 0 & -1\\
		-1 & 0 & 0\\
		0 & -1 & 0\\
		0 & 0 & -1
	\end{pmatrix}
	\\[10pt]
	\begin{pmatrix}
		-1 & -1 & -1 & 0 & 0 & 0\\
		0 & 0 & 0 & -1 & -1 & -1
	\end{pmatrix} &
	0_{2\times 2} & 0_{2\times 3}
	\\[10pt]
	\begin{pmatrix}
		1 & 0 & 0 & 1 & 0 & 0\\
		0 & 1 & 0 & 0 & 1 & 0\\
		0 & 0 & 1 & 0 & 0 & 1
	\end{pmatrix} &
	0_{3\times 2} & 0_{3\times 3}
\end{pmatrix}
,\qquad
r_7 =
\begin{pmatrix}
	-0.225\\ -0.153\\ -0.162\\ -0.225\\ -0.162\\ -0.126\\ -325\\ -575\\ 325\\ 300\\ 275
\end{pmatrix}.
\]
\endgroup

\paragraph{LCP P8 \cite{ChenYe2000}.}
\[
M_8 =
\begin{pmatrix}
	0 & 0 & 1\\
	0 & 1 & 0\\
	0 & -1 & 1
\end{pmatrix},
\qquad
r_8 =
\begin{pmatrix}
	0\\
	0\\
	-1
\end{pmatrix}.
\]

\paragraph{LCP P9 \cite{Fernandes2001}.}
\[
M_9 =
\begin{pmatrix}
	0 & 0 & -2\\
	0 & 0 & -1\\
	0 & 2 & 1
\end{pmatrix},
\qquad
r_9 =
\begin{pmatrix}
	0\\
	0\\
	-1
\end{pmatrix}.
\]

\paragraph{LCP P10 \cite{Dutta2024LCP}.}
\[
M_{10} =
\begin{pmatrix}
	0 & 0 & 0& 1& 2\\
	0 & 0 & -1 & -1& 2\\
	0 & -1 & 0 & -1 & 1 \\
    1& -1& -1& 0& 0 \\
    2& 1& 0& 0& 0\\
\end{pmatrix},
\qquad
r_{10} =
\begin{pmatrix}
	2\\
	1\\
	-7\\
    -2\\
    1
\end{pmatrix}.
\]

All the above instances are small-scale problems with different structured matrices and vectors, and they are used to test the reliability and robustness of different approaches for LCPs.

The results of the reliability and robustness test are summarized in Table~\ref{tab:section1}, where we list the number of iterations (Iter), and a success indicator (S/F), where ``S'' denotes that the algorithm successfully computed a solution satisfying the complementarity condition \(x^{T}(Mx - r) \leq \epsilon\), and ``F'' indicates failure to meet this criterion.

\begin{table}[htbp]
	\centering
	\caption{Reliability and robustness test on LCP solvers}
	\label{tab:section1}
	\scriptsize
\setlength{\tabcolsep}{3pt}
\renewcommand{\arraystretch}{1.1}
\begin{tabular}{ll*{10}{c}}
	\toprule
	& & \multicolumn{2}{c}{ReQR}
	& \multicolumn{2}{c}{H-ReCR}
	& \multicolumn{2}{c}{PATH}
	& \multicolumn{2}{c}{Lemke}
	& \multicolumn{2}{c}{FB} \\
	\cmidrule(lr){3-4}
	\cmidrule(lr){5-6}
	\cmidrule(lr){7-8}
	\cmidrule(lr){9-10}
	\cmidrule(lr){11-12}
	P & $n$ & Iter & S/F & Iter & S/F & Iter & S/F & Iter & S/F & Iter & S/F \\
	\midrule
	P1  & 2  & 0 & S & 0 & S & 0  & S & 1  & S & 4  & S \\
	P2  & 16 & 0 & S & 0 & S & 0  & S & 31 & S & 11 & F \\
	P3  & 3  & 1 & S & 0 & S & 4  & S & 4  & S & 27 & S \\
	P4  & 4  & 0 & S & 0 & S & 0  & S & 1  & F & 11 & S \\
	P5  & 4  & 1 & S & 3 & S & 5  & S & 4  & S & 24 & S \\
	P6  & 4  & 1 & S & 2 & S & 7  & S & 5  & S & 27 & F \\
	P7  & 11 & 1 & S & 1 & S & 21 & S & 9  & F & 8  & F \\
	P8  & 3  & 0 & S & 0 & S & 0  & S & 0  & S & 1  & S \\
	P9  & 3  & 0 & S & 0 & S & 0  & S & 0  & S & 1  & S \\
	P10 & 5  & 0 & S & 0 & S & 2  & S & 1  & F & 24 & F \\
	\bottomrule
\end{tabular} 
\end{table}

As shown in Table~\ref{tab:section1}, both ReQR and H-ReCR successfully compute approximate feasible solutions for all benchmark instances (P1--P10), consistently satisfying the complementarity conditions within the prescribed tolerance. In many cases, the reported number of iterations is zero, indicating that the solution obtained from the initial convex relaxation already satisfies the complementarity conditions without requiring further refinement. For the remaining instances, only one or a few additional iterations are needed, highlighting the effectiveness of the rectification mechanism in correcting any residual complementarity violation.

In contrast, the classical approaches considered in Table~\ref{tab:section1} exhibit less consistent behavior. While Lemke’s algorithm performs well on several instances, it fails on some problems (e.g., P4, P7 and P10), reflecting its sensitivity to problem structure. Similarly, the Fischer--Burmeister (FB) based approach show multiple failures, particularly on instances indefinite matrices. The PATH solver, although robust in terms of feasibility and successfully solving all instances, generally requires a significantly larger number of iterations, especially on more complex problems such as P5--P7, indicating higher computational effort compared to the proposed approaches. These observations demonstrate clearly that compared with existing approaches for LCPs,  the proposed ReCR approaches are more reliable, robust and scalable.

\subsubsection{Scalability Test on Structured Large-scale LCP Instances}

In this subsection, we first list four large-scale LCP instances collected from the literature and then report the scalability test results for ReQR, H-ReCR and the PATH solver \cite{Ferris1999Path}. The matrices in these four instances are structured and sparse, and $r$ is the all-ones vector. Therefore, we can extend these instances to high-dimensional settings, e.g., \(n = 1000\), \(5000\), \(10000\), \(15000\), \(20000\), \(25000\), \(30000\) and \(40000\), to assess the scalability of the ReCR approaches.

\paragraph{LCP P11 \cite{Ahn1983}.}
\[
M_{11} =
\begin{pmatrix}
	4 & -2 & 0 & \cdots & 0\\
	1 & 4 & -2 & \cdots & 0\\
	0 & 1 & 4 & \ddots & \vdots\\
	\vdots & \ddots & \ddots & \ddots & -2\\
	0 & \cdots & 0 & 1 & 4
\end{pmatrix},
\qquad
r_{11} = e.
\]

\paragraph{LCPs P12--P13 \cite{Geiger1996}.}
\[
M_{12} =
\begin{pmatrix}
	4 & -1 & 0 & \cdots & 0 \\
	-1 & 0 & -1 & \cdots & 0 \\
	0 & -1 & 0 & \ddots & \vdots \\
	\vdots & \ddots & \ddots & \ddots & -1 \\
	0 & 0 & \cdots & -1 & 1
\end{pmatrix},
\qquad
r_{12} = e.
\]

\[
M_{13}
= \mathrm{diag}\!\left(\tfrac{1}{n},\;\tfrac{2}{n},\;\dots,\;1\right),
\qquad
r_{13} = e.
\]

\paragraph{LCP P14 \cite{Murty1988}.}
\[
M_{14} =
\begin{pmatrix}
	1 & 2 & 2 & \cdots & 2 \\
	0 & 1 & 2 & \cdots & 2 \\
	0 & 0 & 1 & \ddots & \vdots \\
	\vdots & \vdots & \ddots & \ddots & 2 \\
	0 & 0 & \cdots & 0 & 1
\end{pmatrix},
\qquad
r_{14} = e.
\]

The results of the scalability test for these four LCP instances are summarized in Table~\ref{tab:section11}. For each method, we report the number of iterations (Iter), the CPU time in seconds, and a success indicator (S/F), where ``S'' denotes that the algorithm successfully computed a solution satisfying the complementarity condition \(x^{T}(Mx - r) \leq \epsilon\), and ``F'' indicates failure to meet this criterion. For larger instances, the entries marked by ``--'' indicate that the PATH solver terminated due to memory or runtime limitations. 

\begin{table}[htbp]
	\centering
	\caption{Scalability test of ReQR and H-ReCR on large-scale LCP instances.}
	\label{tab:section11}
	\scriptsize
\setlength{\tabcolsep}{4pt}
\renewcommand{\arraystretch}{1.1}
\begin{tabular}{ll*{9}{c}}
\toprule
& & \multicolumn{3}{c}{ReQR} & \multicolumn{3}{c}{H-ReCR} & \multicolumn{3}{c}{PATH} \\
\cmidrule(lr){3-5}
\cmidrule(lr){6-8}
\cmidrule(lr){9-11}
P & $n$ & Iter & CPU (s) & S/F & Iter & CPU (s) & S/F & Iter & CPU (s) & S/F \\
\midrule
P11 & 1000  & 0 & 0.0192 & S & 0 & 0.0117 & S & 0 & 0.0174 & S \\
P11 & 5000  & 0 & 0.2136 & S & 0 & 0.2173 & S & 0 & 0.4166 & S \\
P11 & 10000 & 0 & 0.6686 & S & 0 & 0.7497 & S & 0 & 2.5413 & S \\
P11 & 15000 & 0 & 1.4079 & S & 0 & 1.4152 & S & -- & -- & -- \\
P11 & 20000 & 0 & 2.5331 & S & 0 & 2.5427 & S & -- & -- & -- \\
P11 & 25000 & 0 & 3.7836 & S & 0 & 3.8207 & S & -- & -- & -- \\
P11 & 30000 & 0 & 5.3860 & S & 0 & 5.4804 & S & -- & -- & -- \\
P11 & 40000 & 0 & 9.5597 & S & 0 & 9.6625 & S & -- & -- & -- \\
\midrule
P12 & 1000  & 0 & 0.0183 & S & 0 & 0.0187 & S & 0 & 0.0016 & S \\
P12 & 5000  & 0 & 0.2092 & S & 0 & 0.2350 & S & 0 & 0.0071 & S \\
P12 & 10000 & 0 & 0.6734 & S & 0 & 0.6776 & S & 0 & 0.0141 & S \\
P12 & 15000 & 0 & 1.3991 & S & 0 & 1.4204 & S & -- & -- & -- \\
P12 & 20000 & 0 & 2.5440 & S & 0 & 2.5426 & S & -- & -- & -- \\
P12 & 25000 & 0 & 3.7637 & S & 0 & 3.8404 & S & -- & -- & -- \\
P12 & 30000 & 0 & 5.4719 & S & 0 & 5.4457 & S & -- & -- & -- \\
P12 & 40000 & 0 & 9.5454 & S & 0 & 9.6419 & S & -- & -- & -- \\
\midrule
P13 & 1000  & 0 & 0.0177 & S & 0 & 0.0170 & S & 0 & 0.0043 & S \\
P13 & 5000  & 0 & 0.2162 & S & 0 & 0.2107 & S & 0 & 0.0321 & S \\
P13 & 10000 & 0 & 0.6898 & S & 0 & 0.7083 & S & 0 & 1.4029 & S \\
P13 & 15000 & 0 & 1.4183 & S & 0 & 1.4183 & S & -- & -- & -- \\
P13 & 20000 & 0 & 2.6285 & S & 0 & 2.5751 & S & -- & -- & -- \\
P13 & 25000 & 0 & 3.7791 & S & 0 & 3.8312 & S & -- & -- & -- \\
P13 & 30000 & 0 & 5.4631 & S & 0 & 5.4483 & S & -- & -- & -- \\
P13 & 40000 & 0 & 9.6123 & S & 0 & 9.6734 & S & -- & -- & -- \\
\midrule
P14 & 1000  & 0 & 0.0170 & S & 0 & 0.0263 & S & 0 & 0.0018 & S \\
P14 & 5000  & 0 & 0.2096 & S & 0 & 0.2172 & S & 0 & 0.0561 & S \\
P14 & 10000 & 0 & 0.6996 & S & 0 & 0.6829 & S & 0 & 1.7105 & S \\
P14 & 15000 & 0 & 1.4107 & S & 0 & 1.4242 & S & -- & -- & -- \\
P14 & 20000 & 0 & 2.5698 & S & 0 & 2.5732 & S & -- & -- & -- \\
P14 & 25000 & 0 & 3.7668 & S & 0 & 3.8297 & S & -- & -- & -- \\
P14 & 30000 & 0 & 5.3806 & S & 0 & 5.4877 & S & -- & -- & -- \\
P14 & 40000 & 0 & 19.4633 & S & 0 & 9.7303 & S & -- & -- & -- \\
\bottomrule
\end{tabular}

\vspace{2mm}
\footnotesize{-- indicates solver terminated due to memory or runtime limitations (PATH failure).} 
\end{table}


The scalability results reported in Table~\ref{tab:section11} further confirm the efficiency and robustness of the proposed methods on large-scale structured LCP instances (P11--P14). For all tested problem sizes up to $n = 40000$, both ReQR and H-ReCR successfully obtain feasible solutions. A notable feature of these results is that the number of iterations remains equal to zero across all instances. This indicates that, for these structured problems, the initial solution obtained from ReCR approaches already satisfies the complementarity conditions within the required tolerance. In other words, no additional refinement phase is needed, and the algorithms effectively solve the LCP in a single step. For comparison, the PATH solver is able to solve the smaller instances efficiently; however, for larger problem sizes it terminates prematurely due to memory or runtime limitations, as indicated by ``--'' in Table~\ref{tab:section11}.

Overall, the results in Tables~\ref{tab:section1} and~\ref{tab:section11} suggest that the proposed ReCR approaches are reliable, robust and efficient on these benchmark instances. They perform consistently on the small-scale benchmark set and scale effectively to large structured problem sizes with minimal computational effort.

\subsubsection{Test on Synthetic LCPs}

In this subsection, we  generate three different types of synthetic LCP instances and test the performance of H-ReCR, ReQR and PATH on these synthetic LCP instances.

\paragraph{(I) Scaled LCPs with positive diagonal matrix (P15).}
We first generate symmetric LCPs with $M=A^TA$, where both the matrix $A$ and the vector $r$ are generated from random distributions such as uniform or normal distributions. This ensures the resulting LCP has a  convex solution set, denoted by $\mathcal{X}_\mathrm{LCP}$. Then a positive diagonal scaling matrix $D=\diag(d_1,d_2,\cdots,d_n)$ is introduced to make \(DM\) indefinite while preserving the original solution. For example, we can first select some $2\times 2$ submatrix defined by
\[ M_s=\left ( \begin{array}{cc}
	m_{ii} & m_{ij} \\
	m_{ji} & m_{jj}\end{array}\right), \qquad m_{ii}m_{jj} \approx m_{ij}m_{ji}.\]
Then, we carefully select the scaling elements $d_i$ and $d_j$ so that the scaled submatrix is indefinite. By applying such a scaling technique to multiple submatrices, we obtain a scaled matrix $DM$ that is indefinite. The new LCP is constructed by
\[
	x\ge 0, \quad DMx\ge Dr, \quad x^T(DMx-Dr)=0.
\]
From the above construction process, we can see that the new  LCP has the same convex solution set as that of the original LCP.

For each dimension \(n \in \{100, 500, 1000\}\), five instances are generated and the results are averaged. The results are reported in Table~\ref{tab:section2A}.

\begin{table}[htbp]
	\centering
	\caption{Performance of ReQR, H-ReCR and PATH on LCPs (P15)}
	\label{tab:section2A}
	\scriptsize
\setlength{\tabcolsep}{4pt}
\renewcommand{\arraystretch}{1.1}
\begin{tabular}{ll*{9}{c}}
\toprule
& & \multicolumn{3}{c}{ReQR} & \multicolumn{3}{c}{H-ReCR} & \multicolumn{3}{c}{PATH} \\
\cmidrule(lr){3-5}
\cmidrule(lr){6-8}
\cmidrule(lr){9-11}
P & $n$ & Iter & CPU (s) & S/F & Iter & CPU (s) & S/F & Iter & CPU (s) & S/F \\
\midrule
P15 & 100  & 10.2 & 0.88   & S & 12.0 & 0.77   & S & 5  & 0.0067 & S \\
P15 & 500  & 27.6 & 74.65  & S & 14.6 & 42.47  & S & 12 & 0.1768 & S \\
P15 & 1000 & 38.6 & 733.20 & S & 19.8 & 410.96 & S & 11 & 1.1148 & S \\
\bottomrule
\end{tabular} 
\end{table}

The performance results reported in Table~\ref{tab:section2A} summarize the behavior of ReQR, H-ReCR and the PATH solver on feasible random convex LCP instances (P15). All three methods successfully solve the problem for all tested dimensions, indicating that this class of instances is well-posed and numerically stable. A clear trend observed from the table is the growth of computational cost with respect to the problem dimension. Both ReQR and H-ReCR exhibit a significant increase in CPU time as $n$ increases from $100$ to $1000$, reflecting the increased cost of solving the underlying linear and quadratic subproblems. In contrast, the PATH solver remains considerably faster across all instances. One possible explanation is that in both H-ReCR and ReQR, we simply resort to the off-shelf solver to solve the LO/QO subproblems, and did not explore the inherent relationships between the subproblems in two consecutive iterations to speed up the process. In comparison, PATH uses pivot rules or the solution of a linear equation system to update the sequence.

Comparing the two proposed ReCR methods, H-ReCR consistently outperforms ReQR in terms of both iteration count and CPU time. The hybrid strategy effectively reduces the number of iterations required to reach a solution, leading to substantial computational savings, particularly for larger problem sizes. For example, at $n=1000$, H-ReCR reduces the iteration count by nearly half and achieves a significantly lower CPU time compared to ReQR. This demonstrates the advantage of combining global and local refinement strategies in the ReCR framework.

Another important observation is that the number of iterations for both ReQR and H-ReCR remains moderate and grows relatively slowly with the problem size. This suggests that the proposed methods maintain stable convergence behavior even as the dimension increases, despite the noticeable increase in per-iteration computational cost.  Overall, the results indicate that while PATH is highly efficient for this class of convex LCPs, the proposed ReCR approaches, particularly H-ReCR, provide a competitive and robust alternative.

\paragraph{(II) Structured infeasible LCPs (P16).}
We now discuss how to construct infeasible LCP instances that have  a feasible  relaxation model \reff{mod:PLO-LCP} by following a similar structure as in Example \ref{Inf-LCP1}.   For this, we first divide the index set $\mathcal{I}$ into two subsets based on some specific integer  (e.g., $i_0=K<n/2$).
\[ \mathcal{I}_0=\{i\in \mathcal{I}: i\le i_0\}, \quad \bar{\mathcal{I}}_0=\mathcal{I}\setminus\mathcal{I}_0.\]
Then we choose a vector $r$ satisfying the following inequalities:
\begin{eqnarray}  \sum_{i \in \mathcal{I}_0} r_i< 0, \qquad \quad r_i\ge 0, \forall i \in \bar{\mathcal{I}}_0.\label{inq1:Inf-LCP} \end{eqnarray}
   Next we randomly generate a submatrix $M_{0,:}$ consisting of  the first $K$ rows of the matrix $M$ such that the summation of every column in the submatrix is nonnegative, i.e.,
   \begin{eqnarray}\label{inq2:Inf-LCP} \sum_{i=1}^K m_{ij}\ge 0, \forall j\in \mathcal{I}.\end{eqnarray}
       Next we generate $K$ submatrices $M_{k,:}, k=1,\cdots,K$ corresponding to the index sets $\mathcal{I}_k, k=1,\cdots, K$ satisfying
    the following equality
    \[ \sum_{k=1}^K \|\mathcal{I}_k\|_1=n-K.\]
      Each submatrix $M_{k,:}$  will have some hidden structure  similar to that in Example \reff{Inf-LCP1}. For convenience, we can further divide it into two sub-submatrices $M_{k0}$ and $\bar{M}_{k}$ respectively, where
     \[  M_{k0}=(m_{ij}), \quad  i\in \mathcal{I}_k, j\in \mathcal{I}_0, \qquad \bar{M}_{k}=(m_{ij}), \quad i\in \mathcal{I}_k, j\in \bar{\mathcal{I}}_0. \]
      We further require that for every $k\in \{1,\cdots,K\}$, the submatrix $M_{k0}$ has    only one  column with nonzero elements, e.g., all the elements in the $k$-th column of $M_{k0}$   equal 1. Consequentially, we have
      \[ M_{k,:}x=x_k +\bar{M}_k \bar{x}, \quad \bar{x}=(x_{K+1}, x_{K+2},\cdots,x_{K+n}).\]
            Then we choose a submatrix  $\bar{M}^\mathrm{sub}_{k}$ of
      $\bar{M}_{k}$ corresponding to   a subset $\mathcal{I}^\mathrm{sub}_k$ of $\mathcal{I}_k$ such that
      the summation of every column in  $\bar{M}^\mathrm{sub}_{k}$ is nonpositive, i.e.,
      \begin{eqnarray}  \sum_{j\in \bar{\mathcal{I}}_0}  m_{ij}\le 0, \quad \forall i\in \mathcal{I}_k^\mathrm{sub}. \label{inq3:Inf-LCP}                                                                        \end{eqnarray}
Since all  the LCP instances derived from the above process follow a similar structure as Example \reff{Inf-LCP1}, therefore, these LCP instances are all infeasible.

For each dimension \(n \in \{100, 500, 1000\}\), five infeasible instances are generated and the results are averaged.  The results are summarized in Table~\ref{tab:section2B}.

\begin{table}[htbp]
	\centering
	\caption{Performance of ReQR, H-ReCR and PATH on   LCPs (P16)}
	\label{tab:section2B}
	\scriptsize
\setlength{\tabcolsep}{4pt}
\renewcommand{\arraystretch}{1.1}
\begin{tabular}{ll*{9}{c}}
\toprule
& & \multicolumn{3}{c}{ReQR} & \multicolumn{3}{c}{H-ReCR} & \multicolumn{3}{c}{PATH} \\
\cmidrule(lr){3-5}
\cmidrule(lr){6-8}
\cmidrule(lr){9-11}
P & $n$ & Iter & CPU (s) & S/F & Iter & CPU (s) & S/F & Iter & CPU (s) & S/F \\
\midrule
P16 & 100  & 3.6 & 0.15 & S & 8.8  & 0.17 & S & 0 & 0.000061 & U \\
P16 & 500  & 1.2 & 0.39 & S & 9.6  & 1.05 & S & 0 & 0.000104 & U \\
P16 & 1000 & 1.0 & 0.95 & S & 13.4 & 4.43 & S & 0 & 0.000179 & U \\
\bottomrule
\end{tabular} 
\end{table}

The results reported in Table~\ref{tab:section2B} indicate that both ReQR and H-ReCR successfully terminate on all tested problem sizes, demonstrating stable numerical behavior for this class of instances. In contrast, the PATH solver returns an undecided status (U) for all cases, as it did not return an infeasibility certificate under the tested settings. Therefore, while PATH terminates almost instantaneously with zero reported iterations, it does not provide a conclusive feasibility/infeasibility certificate for these instances.

A notable observation is the difference in computational effort among the methods. PATH consistently reports negligible CPU time, reflecting its highly optimized implementation; however, this performance should be interpreted together with the absence of a definitive feasibility/infeasibility conclusion. In contrast, both ReQR and H-ReCR require a moderate number of iterations and higher CPU time, with the computational cost increasing as the problem size grows. This is expected, as the proposed methods rely on solving linear and quadratic subproblems at each iteration.

Comparing the two proposed approaches, ReQR generally requires fewer iterations and less CPU time than H-ReCR for these instances. This suggests that, for the P16 class, the local refinement strategy in ReQR is already effective, and the additional global refinement phase in H-ReCR does not provide further computational advantage. Nevertheless, H-ReCR maintains stable performance and converges reliably across all tested dimensions.

\paragraph{(III) Scaled LCPs with indefinite diagonal matrix (P17).}

In this part we generate feasible symmetric LCPs with positive semidefinite matrices that have convex solution sets. Let $x^*$ be a feasible solution to the symmetric LCP. Next, we carefully assign negative values to some elements of the scaling matrix. For example, if $(Mx^*)_i = r_i$ for some index $i$, we can choose $d_i < 0$ and replace the $i$-th constraint in the original LCP by $d_i(Mx - r)_i \ge 0$. Note that the new LCP constructed in this way may no longer have a convex solution set, as illustrated by the following example:
\[
(M|r)=\left( \begin{array}{cc|c}
	1 & 1 & 1 \\
	1 & 1 & -1
\end{array} \right), \quad x \ge 0, \; Mx - r \ge 0, \; x^T(Mx - r)=0.
\]
The above LCP has a unique solution $(x_1^*, x_2^*) = (1,0)$. We can construct a new LCP as follows:
\[
(M|r)=\left( \begin{array}{cc|c}
	-1 & -1 & -1 \\
	1 & 1 & -1
\end{array} \right), \quad x \ge 0, \; Mx - r \ge 0, \; x^T(Mx - r)=0.
\]
One can easily see that the above LCP has two different solutions: $(1,0)$ and $(0,0)$.

For each dimension \(n \in \{100, 500, 1000\}\), 5 instances are generated using the above procedures, and the reported results correspond to averaged performance. The results are presented in Table~\ref{tab:section3}.

\begin{table}[htbp]
	\centering
	\caption{Performance of ReQR, H-ReCR and PATH on  LCPs (P17) }
	\label{tab:section3}
	\scriptsize
\setlength{\tabcolsep}{4pt}
\renewcommand{\arraystretch}{1.1}
\begin{tabular}{ll*{9}{c}}
\toprule
& & \multicolumn{3}{c}{ReQR} & \multicolumn{3}{c}{H-ReCR} & \multicolumn{3}{c}{PATH} \\
\cmidrule(lr){3-5}
\cmidrule(lr){6-8}
\cmidrule(lr){9-11}
P & $n$ & Iter & CPU (s) & S/F & Iter & CPU (s) & S/F & Iter & CPU (s) & S/F \\
\midrule
P17 & 100  & 25.6 & 1.95    & F & 21.4 & 1.48    & S & 81 & 0.2086 & F \\
P17 & 500  & 50.8 & 166.04  & F & 13.4 & 48.04   & S & 27 & 5.1934 & F \\
P17 & 1000 & 50.0 & 1647.84 & F & 15.2 & 485.27  & S & 39 & 36.7491 & F \\
\bottomrule
\end{tabular} 
\end{table}

From Table~\ref{tab:section3}, we observe that H-ReCR successfully obtains feasible solutions for all the tested instances, whereas both ReQR and the PATH solver fail to do so. This behavior is consistent with the theoretical construction of P17, where the introduction of negative scaling factors may destroy the convexity of the solution set and lead to nonconvex or disconnected feasible regions. In such settings, purely local refinement methods such as ReQR may be trapped in a local solution and fail to locate a global solution. Classical solvers such as PATH, which usually rely on local information at the current iterate, may also struggle to escape the local trap.

In terms of computational performance, the PATH solver remains relatively fast but fails to produce valid solutions across all instances. Similarly,  ReQR requires more iterations and more CPU time, but still fails to find a feasible solution. In contrast, H-ReCR consistently succeeds and requires significantly fewer iterations and lower CPU time compared with ReQR. For example, when $n=500$ and $n=1000$, H-ReCR successfully finds a feasible solution within about one-third of the CPU time used by ReQR.

Overall, the results suggest that the P17 instances represent a challenging subclass of feasible LCPs with a nonconvex solution set. While classical and local methods fail to identify valid solutions on these generated instances, the proposed H-ReCR approach exhibits strong robustness and reliability.

\subsection{Numerical Experiments on SLCCs}

In this subsection, we investigate the reliability and scalability of the proposed H-ReCR-SLCC and ReQR-SLCC algorithms and compare with the PATH solver on several types of synthetically generated SLCC instances. The benchmark problems are constructed using problem structures motivated by the affine variational inequality (AVI) formulations in \cite{CaoFerris1996}, together with additional structured perturbation and infeasibility constructions designed to generate challenging complementarity geometries.

The benchmark includes three types of SLCC instances from the literature, the friction-contact problems, AVIs over compact sets, and the Nash equilibrium problems \cite{CaoFerris1996}, as well as two additional types of SLCC instances generated in this work that are either infeasible or have nonconvex solution set. The benchmark library includes a broad range of structured and challenging SLCC instances, including convex, weakly nonconvex, nonconvex and infeasible instances, which facilitates a comprehensive evaluation of the reliability, scalability and infeasibility-detection capability of the proposed algorithms across diverse  structures.

For each type and each dimension $n \in \{100,500,1000\}$, five representative instances were generated, and the average performance of the tested algorithms is reported.

We next describe the generation of three classes of SLCC instances based on the AVI formulation studied in \cite{CaoFerris1996}. Since these constructions closely follow the framework in \cite{CaoFerris1996}, we summarize only the main features of each class and omit implementation details.

\paragraph{(IV) Friction-contact problems (P18).}
The P18 benchmark problem is derived from the friction-contact AVI formulation, where the contact-force variables are modeled through complementarity conditions and auxiliary variables are introduced through linear coupling constraints. Following the construction in \cite{CaoFerris1996}, we generate sparse banded matrices together with planted feasible solutions to obtain scalable feasible SLCC instances. The number of variables is $m=6+n$and the dimensionality of the matrices is $p=6+n$. The numerical results are summarized in Table~\ref{tab:sectionP18}.

\begin{table}[htbp]
	\centering
	\caption{Performance of ReQR, H-ReCR and PATH on SLCCs (P18)}
	\label{tab:sectionP18}
	\scriptsize
\setlength{\tabcolsep}{4pt}
\renewcommand{\arraystretch}{1.1}

\begin{tabular}{llccccccccc}
\toprule
& & \multicolumn{3}{c}{ReQR} & \multicolumn{3}{c}{H-ReCR} & \multicolumn{3}{c}{PATH} \\
\cmidrule(lr){3-5}
\cmidrule(lr){6-8}
\cmidrule(lr){9-11}

P & $n$ & Iter & CPU (s) & S/F & Iter & CPU (s) & S/F & Iter & CPU (s) & S/F \\
\midrule

P18 & 100  & 0.0   & 0.02360    & S & 0.0  & 0.02280    & S & 2.0   & 0.00495     & S \\
P18 & 500  & 11.0  & 35.50380   & S & 53.0 & 133.54710  & S & 5.0   & 0.22107     & S \\
P18 & 1000 & 101.0 & 6408.82510 & F & 13.0 & 103.93340  & S & 133.0 & 154.34871   & F \\

\bottomrule
\end{tabular} 
\end{table}

As shown in Table~\ref{tab:sectionP18}, all three methods successfully solve the small-scale SLCC instances with $n=100$. In this case, the reported CPU times are very small, and both ReQR and H-ReCR require essentially no additional refinement beyond initialization. The PATH solver also performs efficiently on this instance with only a few iterations and negligible runtime.

For the medium-sized instance with $n=500$, all three methods still succeed, but the computational behavior becomes noticeably different. ReQR requires fewer iterations and less CPU time than H-ReCR on this instance, indicating that the local refinement phase of ReQR remains effective on moderate-scale friction-contact problems. However, PATH substantially outperforms both proposed approaches in terms of CPU time, solving the instance in only a fraction of a second.

For the largest instance with $n=1000$, the behavior of the methods diverges significantly. ReQR fails to find a solution meeting the termination criterion despite taking more than 100 iterations and a much longer computational time. Similarly, the PATH solver also fails on this instance after taking more than 100 iterations. In contrast, H-ReCR successfully computes a feasible SLCC solution within 13 iterations and a shorter CPU time than both ReQR and PATH. These results demonstrate the robustness of H-ReCR on large-scale and more challenging friction-contact-inspired SLCC instances.

Overall, the numerical results on P18 indicate that all methods perform well on small-scale and medium-scale instances, with PATH being the most efficient. However, when the problem size increases to $n=1000$, both ReQR and PATH fail to find a feasible solution to the underlying SLCC, while H-ReCR successfully finds one.

\paragraph{(V) AVIs over compact sets (P19).}
The P19 benchmark problem is derived from compact-set AVI formulations, where bounded box constraints are represented through auxiliary slack variables in SLCC form. Following \cite{CaoFerris1996}, we generate scalable feasible instances using indefinite matrix scaling together with planted feasible solutions satisfying the compact constraints. The dimensions are set as $m=n=p$.  The computational results are summarized in Table~\ref{tab:sectionP19}.

\begin{table}[htbp]
\centering
\caption{Performance of ReQR, H-ReCR and PATH on  SLCCs (P19)}
\label{tab:sectionP19}
\scriptsize
\setlength{\tabcolsep}{4pt}
\renewcommand{\arraystretch}{1.1}

\begin{tabular}{llccccccccc}
\toprule
& & \multicolumn{3}{c}{ReQR} & \multicolumn{3}{c}{H-ReCR} & \multicolumn{3}{c}{PATH} \\
\cmidrule(lr){3-5}
\cmidrule(lr){6-8}
\cmidrule(lr){9-11}

P & $n$ & Iter & CPU (s) & S/F & Iter & CPU (s) & S/F & Iter & CPU (s) & S/F \\
\midrule

P19 & 100  & 5.0  & 0.88610    & S & 7.0  & 0.95150    & S & 6.0  & 0.00546 & S \\
P19 & 500  & 34.0 & 209.44300  & S & 9.0  & 37.22830   & S & 11.0 & 0.31223 & S \\
P19 & 1000 & 49.0 & 2364.67600 & S & 18.0 & 619.23400  & S & 11.0 & 1.62766 & S \\

\bottomrule
\end{tabular} 
\end{table}

As shown in Table~\ref{tab:sectionP19}, all three methods, ReQR, H-ReCR and PATH, successfully solve all SLCC instances in P19. For the small-scale instances with $n=100$, both ReQR and H-ReCR are competitive with PATH in terms of CPU time. However, as the problem size increases, the difference between ReQR and H-ReCR becomes significantly more pronounced. For $n=500$, H-ReCR requires substantially fewer iterations and much less CPU time than ReQR, while PATH remains the most efficient method among the three methods for P19.

This performance gap becomes even more significant for the case $n=1000$. Although all three methods successfully find feasible SLCC solutions, ReQR takes more iterations and uses much longer CPU time than the other two methods. Both H-ReCR and PATH are scalable on these instances, while PATH remains the most efficient method. Overall, the results on P19 show that all three methods are capable of reliably addressing this class of SLCC instances. Among the proposed approaches, H-ReCR consistently exhibits superior scalability and computational efficiency compared with ReQR on medium- and large-scale problems.

\paragraph{(VI) Nash equilibrium problems (P20).}
The P20 benchmark problem is based on Nash equilibrium formulations whose optimality conditions admit an AVI and SLCC representation. Following the structured constructions in \cite{CaoFerris1996}, we generate block-structured positive definite player matrices together with weak inter-player couplings and feasible planted solutions. The number of auxiliary variables  and the number of coupling constraints are defined by $m=n/10$ and $p=n/10$, respectively. The computational results are listed in Table~\ref{tab:sectionP20}.

\begin{table}[htbp]
\centering
\caption{Performance of ReQR, H-ReCR and PATH on SLCCs (P20)}
\label{tab:sectionP20}
\scriptsize
\setlength{\tabcolsep}{4pt}
\renewcommand{\arraystretch}{1.1}

\begin{tabular}{llccccccccc}
\toprule
& & \multicolumn{3}{c}{ReQR} & \multicolumn{3}{c}{H-ReCR} & \multicolumn{3}{c}{PATH} \\
\cmidrule(lr){3-5}
\cmidrule(lr){6-8}
\cmidrule(lr){9-11}

P & $n$ & Iter & CPU (s) & S/F & Iter & CPU (s) & S/F & Iter & CPU (s) & S/F \\
\midrule

P20 & 100  & 1.0 & 0.37660 & S & 3.0 & 0.05770 & S & 6.0 & 0.00130 & S \\
P20 & 500  & 1.0 & 0.56120 & S & 7.0 & 1.09950 & S & 6.0 & 0.00901 & S \\
P20 & 1000 & 1.0 & 2.32930 & S & 7.0 & 5.23780 & S & 8.0 & 0.02674 & S \\

\bottomrule
\end{tabular} 
\end{table}

As shown in Table~\ref{tab:sectionP20}, all three methods, ReQR, H-ReCR and PATH, successfully solve all SLCC instances in P20. It should be pointed out that every instance in P20 has a unique solution due to the positive definite matrix involved in the quadratic objective function for every player. This is consistent with the numerical results in Table~\ref{tab:sectionP20}, as ReQR converges extremely rapidly on all tested instances and requires only one iteration for all the instances within a short CPU time.

The H-ReCR method also successfully solves all tested instances and remains computationally efficient across all problem sizes. Although H-ReCR requires more iterations than ReQR on this benchmark class, the total computational effort remains moderate even for the largest instance with $n=1000$. This indicates that the hybrid refinement framework remains stable and scalable on structured convex game-theoretic SLCCs. Again, PATH remains the most efficient method for P20.

\paragraph{(VII) Scaled  SLCCs with indefinite diagonal matrix (P21).}
We next describe how to generate synthetic SLCC instances that may have a nonconvex solution set and evaluate the performance of the three methods on this class of SLCCs, denoted by P21. The construction of the instances in P21 follows a process similar to the generation of synthetic LCPs P17, where we use a diagonal scaling matrix with some negative diagonal entries to preserve feasibility while potentially producing a nonconvex solution set. This makes this benchmark class more challenging to address.

To construct the benchmark instances, we first choose the problem size $n$ and select the number of auxiliary variables and the number of coupling constraints according to $m=\lceil n/5 \rceil$ and $p = \lceil n/10 \rceil$. Next we follow the construction process in \cite{CaoFerris1996} to  generate an SLCC instance with a symmetric positive definite matrix $M$ and a known unique solution denoted by $(x^*, z^*)$. Let
\[
\mathcal{I}^*_{+} = \{i \in \mathcal{I}: x^*_i > 0\}.
\]
We next select a subset $\mathcal{I}^1_+$ of the above index set and use the selected subset to construct a diagonal matrix
\[
D=\operatorname{diag}(d_1,\dots,d_n), \qquad \text{where } d_i=
\begin{cases}
-1, & i\in \mathcal{I}^1_{+},\\
1, & \text{otherwise}.
\end{cases}
\]
We then replace the complementary constraints in the original SLCC by the following new constraints:
\[
 D(Mx+Qz) \ge Dr, \qquad x^T D(Mx+Qz-r)=0.
\]
The planted point $(x^*,z^*)$ remains feasible for the scaled SLCC, so the new SLCC is feasible. Like the LCP instances in P17, the solution set of the new SLCC may be nonconvex. The numerical results for P21 are summarized in Table~\ref{tab:sectionP21}.

\begin{table}[htbp]
\centering
\caption{Performance of ReQR, H-ReCR and PATH on SLCCs (P21)}
\label{tab:sectionP21}
\scriptsize
\setlength{\tabcolsep}{4pt}
\renewcommand{\arraystretch}{1.1}

\begin{tabular}{llccccccccc}
\toprule
& & \multicolumn{3}{c}{ReQR} & \multicolumn{3}{c}{H-ReCR} & \multicolumn{3}{c}{PATH} \\
\cmidrule(lr){3-5}
\cmidrule(lr){6-8}
\cmidrule(lr){9-11}

P & $n$ & Iter & CPU (s) & S/F & Iter & CPU (s) & S/F & Iter & CPU (s) & S/F \\
\midrule

P21 & 100  & 32.0  & 5.63118     & F & 4.0  & 0.12008    & S & 109 & 0.28518    & F \\
P21 & 500  & 101.0 & 870.02632   & F & 14.0 & 74.88307   & S & 114 & 12.23437   & F \\
P21 & 1000 & 101.0 & 6408.82510  & F & 13.0 & 604.38266  & S & 149 & 107.60681  & F \\

\bottomrule
\end{tabular} 
\end{table}

As shown in Table~\ref{tab:sectionP21}, the SLCC instances in P21 are more challenging to solve. Both ReQR and PATH fail on all tested instances in P21, and H-ReCR is the only tested method able to find a feasible solution to all these instances. This illustrates the robustness of H-ReCR on this class of synthetic SLCC instances.

\paragraph{(VIII) Infeasible SLCC instances (P22).}
We next describe how to generate synthetic SLCCs that are infeasible but with a feasible relaxation model, and evaluate the performance of three methods on these infeasible SLCC instances. These benchmark problems are constructed to evaluate the capability of the proposed methods to detect the infeasibility of the underlying SLCC for which no feasible  solutions exist. The construction of these infeasible SLCC instances follows a similar process like the infeasible LCP instances (P16). For self-completeness, we describe the process below.

To generate these infeasible SLCC instances, we first choose the size $n$, and  select the number of auxiliary variables  and the number of coupling constraints according to $m=\lceil n/5 \rceil$ and $p=\lceil n/10 \rceil$. Next, we generate a random feasible SLCC structure before applying a structured infeasibility perturbation. The matrices $M_0 \in \mathbb{R}^{n\times n}$, $Q_0 \in \mathbb{R}^{n\times m}$, $A \in \mathbb{R}^{p\times n}$, and $B \in \mathbb{R}^{p\times m}$ are generated independently with entries sampled from the standard normal distribution $\mathcal{N}(0,1)$.
A feasible solution pair $(x^{*},z^{*})$ is then generated using randomized sparsity patterns. Specifically, each component of $x^{*}$ is independently activated with probability $\rho_x = 0.5$, and the active entries are sampled independently from the uniform distribution on $[0.1,1.0]$. The auxiliary vector $z^* \in \mathbb{R}^m$ is generated with entries sampled independently from the uniform distribution on $[0,1]$.  Let $\mathcal{I}_{0} = \{i : x_i^{*}>0\}$ denote the support set of the generated complementarity variables. A subset $\mathcal{I}_{1}\subseteq \mathcal{I}_{0}$ is then selected randomly. These indices are subsequently used to construct conflicting complementarity conditions that destroy feasibility of the SLCC system.

To construct the linear coupling constraints, the first row of the matrix $A$ is modified so that
\[
A_{1j}=
\begin{cases}
1, & j\in \mathcal{I}_{1},\\
0, & \text{otherwise},
\end{cases}
\]
while the first row of $B$ is set to zero. The equality vector is then defined by $b = Ax^{*}+Bz^{*}$, which guarantees consistency with the originally generated feasible point prior to the infeasibility perturbation. A nonnegative slack vector $w_0 \in \mathbb{R}^n$ is then constructed such that $(w_0)_i=0$ whenever $x_i^{*}>0$, and otherwise $(w_0)_i = 0.1+\xi_i$ with $\xi_i \sim \mathrm{Unif}[0,1]$.  The preliminary right-hand side vector is then defined by $r_0=M_0x^{*}+Q_0z^{*}-w_0$.  To enforce infeasibility, we perturb the rows associated with the index set $\mathcal{I}_{1}$ by replacing the corresponding rows of $M_0$ and $Q_0$ with strictly positive coefficients. More precisely, for every $i\in\mathcal{I}_{1}$, we set
\[
(M_0)_{i,:} = |(M_0)_{i,:}|+1, \qquad (Q_0)_{i,:} = |(Q_0)_{i,:}|+1,
\]
and simultaneously assign $(r_0)_i=-1$. This perturbation generates a conflicting system in which the complementarity conditions cannot be simultaneously satisfied while preserving nonnegativity of the variables. Consequently, the resulting SLCC instance becomes infeasible.

Finally, to further diversify the problem geometry, we apply a positive diagonal scaling transformation. Specifically, we generate a diagonal matrix $D=\operatorname{diag}(d_1,\dots,d_n)$, where each scaling coefficient is sampled independently from the uniform distribution on $[0.2,3.0]$. The final SLCC matrices are then defined by
\[
	M = DM_0, \qquad Q = DQ_0, \qquad r = Dr_0.
\]
The resulting SLCC instances form a challenging class of infeasible SLCC instances with dense matrices and deliberately conflicting complementarity constraints. These problems are useful for evaluating the robustness of the proposed methods in identifying infeasibility and handling difficult complementarity structures in large-scale SLCC formulations.
The average computational results is reported in Table~\ref{tab:sectionP22}.

\begin{table}[htbp]
\centering
\caption{Performance of ReQR, H-ReCR and PATH on  SLCCs (P22)}
\label{tab:sectionP22}
\scriptsize
\setlength{\tabcolsep}{4pt}
\renewcommand{\arraystretch}{1.1}

\begin{tabular}{llccccccccc}
\toprule
& & \multicolumn{3}{c}{ReQR} & \multicolumn{3}{c}{H-ReCR} & \multicolumn{3}{c}{PATH} \\
\cmidrule(lr){3-5}
\cmidrule(lr){6-8}
\cmidrule(lr){9-11}

P & $n$ & Iter & CPU (s) & S/F & Iter & CPU (s) & S/F & Iter & CPU (s) & S/F \\
\midrule

P22 & 100  & 29.0  & 3.89049     & S & 57.0  & 3.18922     & S & 97  & 0.62272    & F \\
P22 & 500  & 101.0 & 458.23897   & S & 101.0 & 437.68681   & S & 119 & 18.45452   & F \\
P22 & 1000 & 101.0 & 3413.67647  & S & 101.0 & 3206.27877  & S & 95  & 204.65930  & F \\

\bottomrule
\end{tabular} 
\end{table}

As shown in Table~\ref{tab:sectionP22}, both ReQR and H-ReCR successfully detect the infeasibility of all the SLCC instances in P22.  This shows that the proposed procedure for feasibility check is capable of reliably detecting the infeasibility of  these  infeasible SLCC instances. In contrast, the PATH solver did not return an infeasibility certificate under the tested settings.

With a close look, we find that for SLCC instances with $n=100$, ReQR uses fewer iterations and slightly longer CPU time to detect infeasibility than H-ReCR. However, as the problem size increases, both methods require intensive computational effort to detect infeasibility. For instances with $n=500$ or $n=1000$, the number of iterations reaches the maximum number of iterations used in ReCR, and the corresponding CPU time also becomes much longer due to the process of obtaining an infeasibility certificate. Nevertheless, both methods still successfully generate valid infeasibility certificates, demonstrating strong robustness in handling infeasible SLCCs. These results highlight a distinction between the proposed methods and classical solvers for SLCCs: the proposed ReCR framework can provide an infeasibility certificate when the underlying SLCC instance is infeasible.

\section{Conclusions} \label{sec:conclusion}

In this paper, we introduced a relaxation-rectification framework (ReCR) for SLCCs based on the new universal relaxation theory that allows us to recast the problem of determining the  feasibility of an SLCC as an equivalent bilinear optimization problem. We explored the resulting bilinear model and derived necessary and sufficient optimality conditions for its global solutions. Using these theoretical insights, we developed several ReCR approaches aimed at finding solutions of the associated bilinear reformulation. We established convergence properties of the proposed ReCR approaches under stated assumptions and evaluated their performance on SLCC instances from the literature and on synthetic instances. The theoretical analysis and numerical results suggest that the proposed ReCR approaches are reliable, robust, and scalable on the tested instances and can handle both feasible and infeasible SLCCs.

The ReCR framework provides a reliable approach for SLCCs and also suggests new directions for other classes of structured nonconvex optimization problems. Several issues regarding the framework require further investigation. One direction is to improve the computational efficiency of the ReCR methods. In principle, both P-ReCR and PD-ReCR can be viewed as first-order-type methods, as only an LO subproblem needs to be solved at each iteration. Since these LO subproblems share the same constraint set and differ only in the objective function, it would be useful to investigate warm-starting techniques and related strategies for accelerating the solution process. Moreover, from a theoretical perspective, it would be useful to investigate whether an infeasibility certificate can be derived directly when ReCR terminates at a point that does not satisfy the SLCC complementarity conditions.

Another direction is to extend ReCR approaches to other systems involving linear constraints and nonconvex restrictions, such as systems with binary variables and linear constraints (SBLCs) and systems with linear and conic complementarity constraints (SL3Cs). The SLCC model considered in this paper includes SBLCs as a special case. However, in the SBLC case, the relevant matrix is $M=-I_n$, so the corresponding QO reformulation involves minimizing a concave function over a polyhedral set, which can lead to many local minimizers. This structure can affect the performance of ReCR methods for SBLCs, as observed in our preliminary study \cite{ReCR-SBLC2025}. Addressing this issue requires a problem-specific relaxation theory for SBLCs and rectification functions tailored to their structure. Consequently, new theoretical analysis is needed to study the convergence behavior of the resulting ReCR methods. We leave a detailed investigation of these directions to future works \cite{ReCR-SBLC2025,ReCR-SL3C2025}.

Finally, our broader goal is to develop trustworthy and explainable approaches for structured nonconvex optimization problems, and this work represents an initial step in that direction. The SL3C model provides a unified framework for optimization problems in which the objective and constraint functions are linear or quadratic. It also covers certain nonlinear optimization problems, such as $k$-means clustering. For certain minimization problems, the first ReCR iteration may provide a lower bound on the optimal value of the original problem. On the other hand, when ReCR also produces a feasible solution, its objective value provides an upper bound. These bounds can then be combined with classical search techniques to compute approximate solutions of the original optimization problem. Another possibility is to reformulate the underlying problem as an equivalent SL3C and design ReCR methods for the resulting system; Encouraging preliminary progress in this direction has been reported in \cite{ReCR-QCQO2025}. It may also be possible to develop ReCR approaches for polynomial optimization based on semidefinite relaxations. Further study is needed to explore these possibilities.

\noindent{Acknowledgement:} In the preparation of this work, we have discussed with numerous experts in the optimization community including Prof. Gorge Lan, prof. Arkadi Nemirovskii and Prof. Nick Sahinidis from GaTech, Prof. Yinyu Ye from Stanford University, prof. Jongshi Pang and Prof. Suvrajeet Sen from USC. Their comments and suggestions are highly appreciated.

\end{document}